\newtheorem{thm}{Theorem}[section]
\newtheorem{lemma}[thm]{Lemma}
\newtheorem{prop}[thm]{Proposition}
\newtheorem{cor}[thm]{Corollary}
\theoremstyle{definition}
\newtheorem{defin}[thm]{Definition}
\theoremstyle{remark}
\newtheorem{rem}[thm]{Remark}
\numberwithin{equation}{section}
\newcommand{\dif}{\mathrm{d}}
\newcommand{\totdif}{\mathrm{D}}
\newcommand{\mf}{\mathscr{F}}
\newcommand{\me}{\mathrm{e}}
\newcommand{\mr}{\mathbb{R}}
\newcommand{\prst}{\mathbb{P}}
\newcommand{\stred}{\mathbb{E}}
\newcommand{\ind}{\mathbf{1}}
\newcommand{\mn}{\mathbb{N}}
\newcommand{\mt}{\mathbb{T}}
\DeclareMathOperator{\supp}{supp}
\DeclareMathOperator*{\esssup}{ess\,sup}
\DeclareMathOperator{\diver}{div}
\newcommand{\semicol}{;}
\begin{document}
\title[A BGK Approximation to Stochastic Scalar Conservation Laws]{A Bhatnagar-Gross-Krook Approximation\\to Stochastic Scalar Conservation Laws}

\author{Martina Hofmanov\'a}

\address{Department of Mathematical Analysis\\ Faculty of Mathematics and Physics, Char\-les University\\ Sokolovsk\'a~83\\ 186~75 Praha~8\\ Czech Republic\vspace{2mm}}
\address{ENS Cachan Bretagne, IRMAR, CNRS, UEB\\ av. Robert Schuman\\ 35~170 Bruz\\ France\vspace{2mm}}
\address{Institute of Information Theory and Automation of the ASCR\\ Pod~Vod\'arenskou v\v{e}\v{z}\'i~4\\ 182~08 Praha~8\\ Czech Republic}

\email{martina.hofmanova@bretagne.ens-cachan.fr}

\thanks{This research was supported in part by the GA\,\v{C}R Grant no. P201/10/0752 and the GA UK Grant no. 556712.}
\subjclass{60H15, 35R60, 35L65}
\keywords{stochastic conservation laws, kinetic solution, BGK model, hydrodynamic limit, stochastic characteristics method}

\begin{abstract}
We study a BGK-like approximation to hyperbolic conservation laws forced by a multiplicative noise. First, we make use of the stochastic characteristics method and establish the existence of a solution for any fixed parameter $\varepsilon$. In the next step, we investigate the limit as $\varepsilon$ tends to $0$ and show the convergence to the kinetic solution of the limit problem.
\end{abstract}

\maketitle

\section{Introduction}

In the present paper, we consider a scalar conservation law with stochastic forcing
\begin{equation}\label{conser}
\begin{split}
\dif u+\diver\big(A(u)\big)\dif t&=\varPhi(u)\,\dif W,\qquad t\in(0,T),\,x\in\mt^N,\\
u(0)&=u_0
\end{split}
\end{equation}
and study its approximation in the sense of Bhatnagar-Gross-Krook (a BGK-like approximation for short). In particular, we aim to describe the conservation law \eqref{conser} as the hydrodynamic limit of the stochastic BGK model, as the microscopic scale $\varepsilon$ goes to $0$.


The literature devoted to the deterministic counterpart, i.e. corresponding to the situation $\varPhi=0$, is quite extensive (see \cite{vov1}, \cite{vov2}, \cite{lpt1}, \cite{lions}, \cite{nouri}, \cite{nouri1}, \cite{tadmor}, \cite{perth}). In that case, the BGK model is given as follows
\begin{equation}\label{bgkdet}
\big(\partial_t+a(\xi)\cdot\nabla\big)f^\varepsilon=\frac{\chi_{u^\varepsilon}-f^\varepsilon}{\varepsilon},\qquad t>0,\,x\in\mt^N,\,\xi\in\mr,
\end{equation}
where $\chi_{u^\varepsilon}$, the so-called equilibrium function, is defined by
$$\chi_{u^\varepsilon}(\xi)=\ind_{0<\xi<u^\varepsilon}-\ind_{u^\varepsilon<\xi<0},$$
and $a$ is the derivative of $A$. The differential operator $\nabla$ is with respect to the space variable $x$. The additional real-valued variable $\xi$ is called velocity; the solution $f^\varepsilon$ is then a microscopic density of particles at $(t,x)$ with velocity $\xi$. The local density of particles is defined by
\begin{equation*}
u^\varepsilon(t,x)=\int_\mr f^\varepsilon(t,x,\xi)\,\dif \xi.
\end{equation*}
The collisions of particles are given by the nonlinear kernel on the right hand side of \eqref{bgkdet}.
The idea is that, as $\varepsilon\rightarrow 0$, the solutions $f^\varepsilon$ of \eqref{bgkdet} converge to $\chi_u$ where $u$ is the unique kinetic or entropy solution of the deterministic scalar conservation law.

The addition of the stochastic term to the basic governing equation is rather natural for both practical and theoretical applications. Such a term can be used for instance to account for numerical and empirical uncertainties and therefore stochastic conservation laws has been recently of growing interest, see \cite{bauzet}, \cite{debus}, \cite{feng}, \cite{holden}, \cite{kim}, \cite{stoica}, \cite{wittbolt}, \cite{weinan}.
The first complete well-posedness result for multi-dimensional scalar conservation laws driven by a general multiplicative noise was obtained by Debussche and Vovelle \cite{debus} for the case of kinetic solutions. In the present paper, we extend this result and show that the kinetic solution is the macroscopic limit of stochastic BGK approximations. As the latter are much simpler equations that can be solved explicitly, this analysis can be used for developing innovative numerical schemes for hyperbolic conservation laws. 

The BGK model in the stochastic case reads
\begin{equation}\label{bgk}
\begin{split}
\dif F^\varepsilon+a(\xi)\cdot\nabla F^\varepsilon\,\dif t&=\frac{\ind_{u^\varepsilon>\xi}-F^\varepsilon}{\varepsilon}\,\dif t-\partial_\xi F^\varepsilon\varPhi\,\dif W-\frac{1}{2}\partial_\xi\big(G^2(-\partial_\xi F^\varepsilon)\big)\,\dif t,\\
F^\varepsilon(0)&=F^\varepsilon_0,
\end{split}
\end{equation}
where the function $F^\varepsilon$ corresponds to $f^\varepsilon+\ind_{0>\xi}$, the local density $u^\varepsilon$ is given as above, and the function $G^2$ will be defined in \eqref{linrust}. Note, that setting $\varPhi=0$ in \eqref{bgk} yields an equation which is equivalent to the deterministic BGK model \eqref{bgkdet}.
Our purpose here is twofold. First, we make use of the stochastic characteristics method as developed by Kunita in \cite{kun1} to study a certain auxiliary problem. With this in hand, we fix $\varepsilon$ and prove the existence of a unique weak solution to the stochastic BGK model \eqref{bgk}.
Second, we establish a series of estimates uniform in $\varepsilon$ which together with the results of Debussche and Vovelle \cite{debus} justify the limit argument, as $\varepsilon\rightarrow0$, and give the convergence of the weak solutions of \eqref{bgk} to the kinetic solution of \eqref{conser}.

Let us make some comments on the deterministic BGK model \eqref{bgkdet}. Even though the general concept of the proof is analogous, we point out that the techniques required by the stochastic case are significantly different. In particular, the characteristic system for the deterministic BGK model consists of independent equations
$$\frac{\dif x_i(t)}{\dif t}=a_i(\xi),\qquad i=1,\dots,\,N,$$
and the $\xi$-coordinate of the characteristic curve is constant. Accordingly, it is much easier to control the behavior of $f^\varepsilon$ for large $\xi$. Namely, if the initial data $f_0^\varepsilon$ are compactly supported (in $\xi$), the same remains valid also for the solution itself and also the convergence proof simplifies. On the contrary, in the stochastic case, the $\xi$-coordinate of the characteristic curve is governed by an SDE and therefore this property is, in general, lost. Similar issues has to be dealt with in order to obtain all the necessary uniform estimates. To overcome this difficulty, it was needed to develop a suitable method to control the decay at infinity in connection with the remaining variables $\omega,\,t,\,x$. (cf. Proposition \ref{kin}).

Using this approach we are able to prove the convergence of the BGK model under a slightly weaker hypothesis on the initial datum $u_0$ than usually assumed in the deterministic case: it is not supposed to be bounded, we only assume $u_0\in L^p(\Omega\times\mt^N)$ for all $p\in[1,\infty)$. Note, that under this condition, the initial data for the deterministic BGK model, for instance $f_0^\varepsilon=\chi_{u_0}$, are not compactly supported and so the usual methods are not applicable. In the deterministic case, however, the boundedness assumption is fairly natural since also the solution $u$ to the conservation law remains bounded. Obviously, this is not true for the stochastic case as it is impossible to get any $L^\infty_\omega$ estimates due to the active white noise term.

There is another difficulty coming from the complex structure of the characteristic system for the stochastic BGK model \eqref{bgk}. Namely, the finite speed of propagation that is an easy consequence of boundedness of the solution $u$ of the conservation law in the deterministic case (see for instance \cite{tadmor}) is no longer valid and therefore some growth assumptions on the transport coefficient $a$ are in place. The hypothesis of bounded derivatives is natural for the stochastic characteristics method as it implies the existence of global stochastic flows. Even though this already includes one important example of Burgers' equation it is of essential interest to handle also more general coefficients having polynomial growth. This was achieved by a suitable cut-off procedure which also guarantees all the necessary estimates.

The exposition is organized as follows. In Section \ref{setting}, we introduce the basic setting and state the main result, Theorem \ref{main}. In order to make the paper more self-contained, Section \ref{prelim} provides a brief overview of two concepts which are the keystones of our proof of existence and convergence of the BGK model. On the one hand, it is the notion of kinetic solution to stochastic hyperbolic conservation laws, on the other hand, the method of stochastic characteristics for first-order linear SPDEs. Section \ref{existence} is mainly devoted to the existence proof for stochastic BGK model, however, in the Subsection \ref{further} we establish some important estimates useful in Section \ref{convergence}. This final section contains technical details of the passage to the limit and completes the proof of Theorem \ref{main}.

\section{Setting and the main result}
\label{setting}
%

We now give the precise assumptions on each of the terms appearing in the above equations \eqref{conser} and \eqref{bgk}. We work on a finite-time interval $[0,T],$ $T>0,$ and consider periodic boundary conditions: $x\in\mt^N$ where $\mt^N$ is the $N$-dimensional torus.
The flux function
$$A=(A_1,\dots,A_N):\mr\longrightarrow\mr^N$$
is supposed to be of class $C^{4,\eta}$, for some $\eta>0$, with a polynomial growth of its first derivative, denoted by $a=(a_1,\dots,a_N)$.

Regarding the stochastic term, let $(\Omega,\mf,(\mf_t)_{t\geq0},\prst)$ be a stochastic basis with a complete, right-continuous filtration. The initial datum may be random in general, i.e. $\mf_0$-measurable, and we assume $u_0\in L^p(\Omega;L^p(\mt^N))$ for all $p\in[1,\infty)$. As we intend to apply the stochastic characteristics method developed by Kunita \cite{kun1}, we restrict ourselves to finite-dimensional noise. However, our results extend to infinite-dimensional setting once the corresponding properties of stochastic flows are established.
Let $\mathfrak{U}$ be a finite-dimensional Hilbert space and $(e_k)_{k=1}^d$ its orthonormal basis. The process $W$ is a $d$-dimensional $(\mf_t)$-Wiener process: $W(t)=\sum_{k=1}^d \beta_k(t)\, e_k$ with $(\beta_k)_{k=1}^d$ being mutually independent real-valued standard Wiener processes relative to $(\mf_t)_{t\geq 0}$. The diffusion coefficient $\varPhi$ is then defined as 
\begin{equation*}
\begin{split}
\varPhi(z):\mathfrak{U}&\longrightarrow L^2(\mt^N)\\
h&\longmapsto \sum_{k=1}^d g_k(\cdot,z(\cdot))\langle e_k,h\rangle,\qquad z\in L^2(\mt^N),
\end{split}
\end{equation*}
where the functions $g_1,\dots,g_d:\mt^N\times\mr\rightarrow\mr$ are of class $C^{4,\eta}$, for some $\eta>0$, with linear growth and bounded derivatives of all orders. Under these assumptions, the following estimate holds true
\begin{equation}\label{linrust}
G^2(x,\xi)=\sum_{k=1}^d|g_k(x,\xi)|^2\leq C\big(1+|\xi|^2\big),\qquad x\in\mt^N,\,\xi\in\mr.
\end{equation}
However, in order to get all the necessary estimates (cf. Corollary \ref{indik}, Remark \ref{indikreason}), we restrict ourselves to two special cases: either
\begin{equation}\label{nula}
g_k(x,0)=0,\qquad x\in\mt^N,\,k=1,\dots,d,
\end{equation}
hence \eqref{linrust} rewrites as
$$G^2(x,\xi)\leq C|\xi|^2,\qquad x\in\mt^N,\,\xi\in\mr,$$
or we strengthen \eqref{linrust} in the following way
\begin{equation}\label{omez}
G^2(x,\xi)\leq C,\qquad x\in\mt^N,\,\xi\in\mr.
\end{equation}
Note, that the latter is satisfied for instance in the case of additive noise.

In this setting, we can assume without loss of generality that the $\sigma$-algebra $\mf$ is countably generated and $(\mf_t)_{t\geq 0}$ is the completed filtration generated by the Wiener process and the initial condition. Let us denote by $\mathcal{P}$ the predictable $\sigma$-algebra on $\Omega\times[0,T]$ associated to $(\mf_t)_{t\geq0}$
and by $\mathcal{P}_s$ the predictable $\sigma$-algebra on $\Omega\times[s,T]$ associated to $(\mf_{t})_{t\geq s}$.
For notational simplicity, we write $L^\infty_{\mathcal{P}_s}(\Omega\times[s,T]\times\mt^N\times\mr)$ to denote\footnote{$\mathcal{B}(\mt^N)$ and $\mathcal{B}(\mr)$, respectively, denotes the Borel $\sigma$-algebra on $\mt^N$ and $\mr$, respectively.}
$$L^\infty\big(\Omega\times[s,T]\times\mt^N\times\mr,\mathcal{P}_s\otimes\mathcal{B}(\mt^N)\otimes\mathcal{B}(\mr),\dif\prst\otimes\dif t\otimes\dif x\otimes\dif \xi\big).$$

Concerning the initial data for the BGK model \eqref{bgk}, one possibility is to consider simply $F^\varepsilon_0=\ind_{u_0>\xi}$, however, one can also take some suitable approximations of $\ind_{u_0>\xi}$. Namely, let $\{u_0^\varepsilon;\,\varepsilon\in(0,1)\}$ be a set of approximate $\mf_0$-measurable initial data, which is bounded in $L^p(\Omega;L^p(\mt^N))$ for all $p\in[1,\infty)$, and assume in addition that $u_0^\varepsilon\rightarrow u_0$ in $L^1(\Omega;L^1(\mt^N))$. Thus, setting $F_0^\varepsilon=\ind_{u^\varepsilon_0>\xi}$, $f_0^\varepsilon=\chi_{u_0^\varepsilon}$ yields the convergence $f_0^\varepsilon\rightarrow f_0=\chi_{u_0}$ in $L^1(\Omega\times\mt^N\times\mr)$.

Let us close this section by stating the main result to be proved precisely.

\begin{thm}[Hydrodynamic limit of the stochastic BGK model]\label{main}
Let the above assumptions hold true. Then, for any $\varepsilon>0$, there exists $F^\varepsilon\in L^\infty_\mathcal{P}(\Omega\times[0,T]\times\mt^N\times\mr)$ which is a unique weak solution to the stochastic BGK model \eqref{bgk} with initial condition $F_0^\varepsilon=\ind_{u^\varepsilon_0>\xi}$. Furthermore, if $f^\varepsilon=F^\varepsilon-\ind_{0>\xi}$ then $(f^\varepsilon)$ converges in $L^p(\Omega\times[0,T]\times\mt^N\times\mr)$, for all $p\in[1,\infty)$, to the equilibrium function $\chi_u$, where $u$ is the unique kinetic solution to the stochastic hyperbolic conservation law \eqref{conser}. Besides, the local densities $(u^\varepsilon)$ converge to the kinetic solution $u$ in $L^p(\Omega\times[0,T]\times\mt^N)$, for all $p\in[1,\infty)$.
\end{thm}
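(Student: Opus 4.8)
The plan is first to linearise \eqref{bgk} by freezing the equilibrium term: treating $\ind_{u^\varepsilon>\xi}$ as a given datum $h\in L^\infty_{\mathcal P}$, equation \eqref{bgk} becomes a linear first-order SPDE in the variables $(x,\xi)$ of precisely the type handled by Kunita's stochastic characteristics method. Its characteristic system is an SDE with drift $a(\xi)$ in $x$ and a noise-driven equation in $\xi$ coming from the $\partial_\xi F^\varepsilon\varPhi\,\dif W$ and $\partial_\xi(G^2\partial_\xi F^\varepsilon)$ terms; since $a$ has only polynomial growth I would first introduce a cut-off to obtain a global stochastic flow, represent the linear solution by composing $h$ (and the initial datum) with the inverse flow via a Duhamel formula that exhibits the damping factor $\me^{-t/\varepsilon}$ produced by the $-F^\varepsilon/\varepsilon$ term, and then remove the cut-off using estimates uniform in the cut-off parameter. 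With the linear solution operator in hand, the genuine nonlinear, nonlocal equation is closed by a fixed point for the self-consistency map $u^\varepsilon\mapsto\int_{\mr}(F^\varepsilon-\ind_{0>\xi})\,\dif\xi$. On a short time interval this map is a contraction: the Duhamel factor $1-\me^{-\tau/\varepsilon}<1$ combines with the elementary Lipschitz bound $\|\ind_{a>\xi}-\ind_{b>\xi}\|_{L^1_\xi}=|a-b|$, and one iterates to cover $[0,T]$. Uniqueness follows from the same estimate, and the bound $0\le F^\varepsilon\le1$ (equivalently the indicator structure of Corollary \ref{indik}) places the solution in $L^\infty_{\mathcal P}$.

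\textbf{Uniform estimates.} Writing $f^\varepsilon=F^\varepsilon-\ind_{0>\xi}$, one checks that the collision term becomes exactly $(\chi_{u^\varepsilon}-f^\varepsilon)/\varepsilon$, so \eqref{bgk} takes the kinetic form used by Debussche and Vovelle. The next, and decisive, step is to produce bounds independent of $\varepsilon$: (i) the pointwise bounds $-\ind_{\xi<0}\le f^\varepsilon\le\ind_{\xi>0}$ of Corollary \ref{indik}, whose proof is exactly where the structural hypotheses \eqref{nula} or \eqref{omez} enter and which also forces the collision defect to carry a sign; (ii) uniform $L^p(\Omega\times[0,T]\times\mt^N)$ bounds on the densities $u^\varepsilon$ for every finite $p$, obtained by applying It\^o's formula to powers of $u^\varepsilon$ and using the linear growth \eqref{linrust} of $\varPhi$; and (iii) the tail estimate of Proposition \ref{kin} controlling the decay of $f^\varepsilon$ as $|\xi|\to\infty$ uniformly in $(\omega,t,x)$ and in $\varepsilon$. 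Rearranging the collision term then yields a nonnegative random measure $m^\varepsilon$ with $\partial_\xi m^\varepsilon=(f^\varepsilon-\chi_{u^\varepsilon})/\varepsilon$, and the above a priori bounds give a bound on the mass of $m^\varepsilon$ uniform in $\varepsilon$; in particular $\|\chi_{u^\varepsilon}-f^\varepsilon\|\le C\varepsilon$.

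\textbf{Passage to the limit.} With these estimates I extract, along a subsequence, weak limits $f^\varepsilon\rightharpoonup f$ and $m^\varepsilon\overset{*}{\rightharpoonup}m$, together with a Young measure $\nu$ encoding the weak limits of the densities $u^\varepsilon$; since $\|\chi_{u^\varepsilon}-f^\varepsilon\|\le C\varepsilon$, the functions $\chi_{u^\varepsilon}$ share the weak limit $f=\int\chi_\lambda\,\dif\nu(\lambda)$. Passing to the limit in the weak formulation of \eqref{bgk} — the flux term $\int a(\xi)f^\varepsilon\,\dif\xi$ via the uniform integrability in $\xi$ supplied by (iii), the stochastic integral by the It\^o isometry together with the bounds on $f^\varepsilon$, and the collision term becoming the kinetic-measure term $\partial_\xi m$ — shows that the limit is a generalized kinetic solution of \eqref{conser} in the sense of \cite{debus}. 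Their reduction theorem then forces $\nu$ to be the Dirac mass $\delta_{u}$, equivalently $u^\varepsilon\to u$ strongly in $L^1$, where $u$ is \emph{the} unique kinetic solution; strong convergence being the defining property of a Dirac Young measure, the uniform $L^p$ bounds promote this to convergence in $L^p(\Omega\times[0,T]\times\mt^N)$. Finally $\chi_{u^\varepsilon}\to\chi_u$ strongly and $f^\varepsilon=\chi_{u^\varepsilon}+O(\varepsilon)$ give $f^\varepsilon\to\chi_u$ in $L^p$; since the limit is independent of the subsequence, the whole family converges.

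\textbf{Main obstacle.} The substance of the argument is the difficulty flagged in the introduction. Because the $\xi$-coordinate of the stochastic characteristics is itself driven by the noise, the velocity support of $f^\varepsilon$ is not preserved in time, so the compact-support argument of the deterministic theory is unavailable; establishing the uniform-in-$\varepsilon$ decay of $f^\varepsilon$ as $|\xi|\to\infty$ through the stochastic flow (Proposition \ref{kin}), and using it to secure the uniform integrability that makes the nonlinear flux $\int a(\xi)f^\varepsilon\,\dif\xi$ pass to the limit, is the crux; the polynomial growth of $a$, which necessitates the cut-off in the existence step, compounds this. The reduction to a Dirac Young measure, i.e.\ the strong convergence of $u^\varepsilon$, is the other essential point, but it is supplied by the kinetic-solution theory of \cite{debus} rather than by any compactness in $\omega$, none being available in the absence of $L^\infty_\omega$ bounds.
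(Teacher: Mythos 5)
Your overall architecture coincides with the paper's (cut-off characteristics and global flows, Duhamel representation with the damping $\me^{-t/\varepsilon}$, a fixed point whose contraction constant $1-\me^{-T/\varepsilon}<1$ in fact works globally on $[0,T]$, so no short-time iteration is needed, and then kinetic measures plus the reduction theorem of \cite{debus}). But there is a genuine gap at the decisive uniform-estimate stage. Your step (ii) — uniform $L^p$ bounds on the densities via It\^o's formula applied to powers of $u^\varepsilon$ — would fail: integrating \eqref{bgk} in $\xi$ kills the relaxation term, but the resulting equation for $u^\varepsilon$ has flux $\diver_x\int_\mr a(\xi)f^\varepsilon\,\dif\xi$ and noise coefficient $\sum_k\int_\mr g_k\,\dif(-\partial_\xi F^\varepsilon)$, and \emph{neither is a function of $u^\varepsilon$} before the limit. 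The chain-rule cancellation $\int_{\mt^N}\diver A(u)\,|u|^{p-2}u\,\dif x=0$ and the linear-growth bound \eqref{linrust} evaluated at $\xi=u^\varepsilon$ are therefore both unavailable, so the It\^o computation does not close. The paper's Proposition \ref{densities} instead works entirely through the solution operator: $L^p$-boundedness of $\mathcal{S}$ (Proposition \ref{pest}, where It\^o's formula is applied at the level of the \emph{linear} equation), the weighted estimate on $\mathcal{S}(t,s)\chi_w$ (Proposition \ref{chi}, resting on the flow moment bound \eqref{dud}), Corollary \ref{indik}, and the monotonicity trick $H(t)\le H(0)$ exploiting the convex-combination structure of \eqref{sol1}; this machinery, which is the paper's central technical contribution, is absent from your outline. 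Relatedly, you misattribute Corollary \ref{indik}: it is not the pointwise bound $-\ind_{\xi<0}\le f^\varepsilon\le\ind_{\xi>0}$ (that follows from $0\le F^\varepsilon\le1$ via \eqref{sol1}), but the weighted $L^1$ bound on $\mathcal{S}(t,s)\ind_{0>\xi}-\ind_{0>\xi}$ — which is also what makes your fixed point well defined in the first place, since $F^\varepsilon$ is not $\xi$-integrable and the paper must iterate on $h^\varepsilon(t)=F^\varepsilon(t)-\mathcal{S}(t,0)\ind_{0>\xi}$ rather than on $F^\varepsilon-\ind_{0>\xi}$.

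The second gap is your claimed rate $\|\chi_{u^\varepsilon}-f^\varepsilon\|\le C\varepsilon$. The uniform bound actually available (Proposition \ref{kin} with $p=0$) controls the mass of $m^\varepsilon$, i.e.\ the $L^1$ norm of the \emph{primitive} $\frac{1}{\varepsilon}\int_{-\infty}^\xi(\ind_{u^\varepsilon>\zeta}-F^\varepsilon)\,\dif\zeta$; nonnegativity of $m^\varepsilon$ lets you bound the integral of this primitive, but not the $L^1_\xi$ norm of its derivative — that is its total variation in $\xi$, which equals precisely $\frac{1}{\varepsilon}\|\chi_{u^\varepsilon}-f^\varepsilon\|_{L^1_\xi}$, the quantity you want: the argument is circular, and no $O(\varepsilon)$ bound in norm is established (or needed) in the paper. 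What the paper proves, by multiplying \eqref{formul} by $\varepsilon$, is only the distributional convergence \eqref{klad}, which suffices to identify the weak* limits of $f^\varepsilon$ and $\chi_{u^\varepsilon}$; the strong convergences $\chi_{u^\varepsilon}\to\chi_u$ and $f^\varepsilon\to\chi_u$ are then obtained by expanding $\|\chi_{u^\varepsilon}-\chi_u\|_{L^2}^2$ as in \eqref{att}, where the weighted bounds \eqref{att1} legitimize the non-compactly-supported test functions $\ind_{\xi>0}$, $\ind_{\xi<0}$ and $\chi_u$, and the identity $|\chi_\alpha-\chi_\beta|^p=|\chi_\alpha-\chi_\beta|$ upgrades $L^2$ to all $L^p$. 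Your Young-measure route to $u^\varepsilon\to u$ (Dirac limit plus uniform $L^p$ integrability) is a legitimate alternative to the paper's $\chi$-function computation, but the convergence of $f^\varepsilon$ should not be routed through the unproven $O(\varepsilon)$ estimate; note also that the $\xi$-decay you cite is the content of Corollary \ref{ff} and Proposition \ref{chi}, while Proposition \ref{kin} is a moment bound on the defect measure.
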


Throughout the paper, we use the letter $C$ to denote a generic positive constant,
which can depend on different quantities but $\varepsilon$ and may change from one line to another.
We also employ a shortened notation for various $L^p$-type norms, e.g. we write $\|\cdot\|_{L^p_{\omega,x,\xi}}$ for the norm in $L^p(\Omega\times\mt^N\times\mr)$ and similarly for other spaces.

\section{Preliminary results}
\label{prelim}

As we are going to apply the well-posedness theory for kinetic solutions of hyperbolic scalar conservation laws \eqref{conser} as well as the theory of stochastic flows generated by stochastic differential equations, we provide a brief overview of these two concepts.


\subsection{Kinetic formulation for scalar conservation laws}

The main reference for this subsection is the paper of Debussche and Vovelle \cite{debus}. For further reading about the kinetic approach used in different settings, we refer the reader to \cite{chen}, \cite{hof}, \cite{lpt1}, \cite{lions}, or \cite{perth}.
In the paper \cite{debus}, the notion of kinetic and generalized kinetic solution to \eqref{conser} was introduced and the existence, uniqueness and continuous dependence on initial data were proved. In the following, we present the main ideas and results while skipping all the technicalities.

Let $u$ be a smooth solution to \eqref{conser}. It follows from the It\^o formula that $u$ also satisfies the kinetic formulation of \eqref{conser}
\begin{equation}\label{kinetic}
\partial_t F+a(\xi)\cdotp\nabla F=\delta_{u=\xi}\varPhi(u)\dot{W}+\partial_\xi\bigg(m-\frac{1}{2}G^2\delta_{u=\xi}\bigg),
\end{equation}
where $F=\ind_{u>\xi}$ and $m$ is an unknown kinetic measure, i.e. a random nonnegative bounded Borel measure on $[0,T]\times\mt^N\times\mr$ that vanishes for large $\xi$ in the following sense: if $B_R^c=\{\xi\in\mr;\,|\xi|\geq R\}$ then
\begin{equation*}
\lim_{R\rightarrow\infty}\stred\,m\big(\mt^N\times[0,T]\times B_R^c\big)=0.
\end{equation*}
Hence we arrive at the notion of kinetic solution: let $u\in L^p(\Omega\times[0,T],\mathcal{P},\dif\prst\otimes\dif t;L^p(\mt^N))$, $\forall p\in[1,\infty)$. It is said to be a kinetic solution to \eqref{conser} provided $F=\ind_{u>\xi}$ is a solution, in the sense of distributions over $[0,T]\times\mt^N\times\mr$, to the kinetic formulation \eqref{kinetic} for some kinetic measure $m$. Replacing the indicator function by a general kinetic function $F$ we obtain the definition of a generalized kinetic solution. It corresponds to the situation where one does not know the exact value of $u(t,x)$ but only its law given by a probability measure $\nu_{t,x}$. More precisely, let $F(t),\,t\in[0,T],$ be a kinetic function on $\Omega\times\mt^N\times\mr$ and $\nu_{t,x}(\xi)=-\partial_\xi F(t,x,\xi)$. Then $F$ is a generalized kinetic solution to \eqref{conser} provided: $F(0)=\ind_{u_0>\xi}$ and for any test function $\varphi\in C_c^\infty([0,T)\times\mt^N\times\mr)$,
\begin{equation}\label{general}
\begin{split}
\int_0^T&\big\langle F(t),\partial_t\varphi(t)\big\rangle\,\dif t+\big\langle F(0),\varphi(0)\big\rangle+\int_0^T\big\langle F(t),a(\xi)\cdotp\nabla\varphi(t)\big\rangle\,\dif t\\
&\qquad\qquad=-\sum_{k=1}^d\int_0^T\int_{\mt^N}\int_\mr g_k(x,\xi)\varphi(t,x,\xi)\,\dif\nu_{t,x}(\xi)\,\dif x\,\dif\beta_k(t)\\
&\quad-\frac{1}{2}\int_0^T\int_{\mt^N}\int_\mr G^2(x,\xi)\partial_\xi\varphi(t,x,\xi)\dif\nu_{t,x}(\xi)\,\dif x\,\dif t+m(\partial_\xi\varphi)
\end{split} 
\end{equation}
holds true $\prst$-a.s..
The assumptions considered in \cite{debus} are the following: the flux function $A$ is of class $C^1$ with a polynomial growth of its derivative$\semicol$ the process $W$ is a (generally infinite-dimensional) cylindrical Wiener process, i.e. $W(t)=\sum_{k\geq1}\beta_k(t) e_k$ with $(\beta_k)_{k\geq1}$ being mutually independent real-valued standard Wiener processes and $(e_k)_{k\geq1}$ a complete orthonormal system in a separable Hilbert space $\mathfrak{U}\semicol$ the mapping $\,\varPhi(z):\mathfrak{U}\rightarrow L^2(\mt^N)$ is defined for each $z\in L^2(\mt^N)$ by $\varPhi(z)e_k=g_k(\cdot,z(\cdot))$ where $g_k\in C(\mt^N\times\mr)$ and the following conditions
\begin{equation*}
\sum_{k\geq1}|g_k(x,\xi)|^2\leq C\big(1+|\xi|^2\big), 
\end{equation*}
\begin{equation*}\label{skorolip}
\sum_{k\geq1}|g_k(x,\xi)-g_k(y,\zeta)|^2\leq C\big(|x-y|^2+|\xi-\zeta|h(|\xi-\zeta|)\big),
\end{equation*}
are fulfilled for every $x,y\in\mt^N,\,\xi,\zeta\in\mr$, with $h$ being a continuous nondecreasing function on $\mr_+$ satisfying, for some $\alpha>0$,
\begin{equation*}\label{fceh}
h(\delta)\leq C\delta^\alpha,\quad\delta<1.
\end{equation*}
Under these hypotheses, the well-posedness result \cite[Theorem 11, Theorem 19]{debus} states: For any $u_0\in L^p(\Omega\times\mt^N)$ for all $p\in[1,\infty)$ there exists a unique kinetic solution to \eqref{conser}. Besides, any generalized kinetic solution $F$ is actually a kinetic solution, i.e. there exists a process $u$ such that $F=\ind_{u>\xi}$. Moreover, if $u_1,\,u_2$ are kinetic solutions with initial data $u_{1,0}$ and $u_{2,0}$, respectively, then for all $t\in[0,T]$
$$\stred\|u_1(t)-u_2(t)\|_{L^1_x}\leq\stred\|u_{1,0}-u_{2,0}\|_{L^1_x}.$$

\subsection{Stochastic flows and stochastic characteristics method}

\label{flows}

The results mentioned in this subsection are due to Kunita and can be found in \cite{kunita} and \cite{kun1}.
To begin with, we introduce some notation. We denote by $C_b^{l,\delta}(\mr^d)$ the space of all $l$-times continuously differentiable functions with bounded derivatives up to order $l$ (the function itself is only required to be of linear growth) and $\delta$-H\"{o}lder continuous $l$-th derivatives.

%


Let $B_t=(B^1_t,\dots,\,B^m_t)$ be an $m$-dimensional Wiener process and let $b^k:\mr^d\rightarrow\mr^d,$ $k=0,\dots,\,m.$ We study the following system of Stratonovich's stochastic differential equations
\begin{equation}\label{sde}
\dif \phi_t=b^0(\phi_t)\,\dif t+\sum_{k=1}^m b^k(\phi_t)\circ\dif B^k_t.
\end{equation}
Under the hypothesis that $b^1,\dots,\,b^m\in C_b^{l+1,\delta}(\mr^d)$ and $b^0\in C_b^{l,\delta}(\mr^d)$ for some $l\geq1$ and $\delta>0$, and for any given $y\in\mr^d$, $s\in[0,T]$, the problem \eqref{sde} possesses a unique solution starting from $y$ at time $s$.
Let us denote this solution by $\phi_{s,t}(y)$. It enjoys several important properties. Namely, it is a continuous $C^{l,\varepsilon}$-semimartingale for any $\varepsilon<\delta$ and defines a forward Brownian stochastic flow of $C^l$-diffeomorphisms, i.e. there exists a null set $N$ of $\Omega$ such that for any $\omega\in N^c$, the family of continuous maps $\{\phi_{s,t}(\omega);\,0\leq s\leq t\leq T\}$ satisfies
\begin{enumerate}
\item $\phi_{s,t}(\omega)=\phi_{r,t}(\omega)\circ\phi_{s,r}(\omega)$ for all $0\leq s\leq r\leq t\leq T$,
\item $\phi_{s,s}(\omega)=\mathrm{Id}$ for all $0\leq s\leq T$,
\item $\phi_{s,t}(\omega):\mr^d\rightarrow\mr^d$ is $l$-times differentiable with respect to $y$, for all $0\leq s\leq t\leq T$, and the derivatives are continuous in $(s,t,y)$,
\item $\phi_{s,t}(\omega):\mr^d\rightarrow\mr^d$ is a $C^l$-diffeomorphism for all $0\leq s\leq t\leq T$,
\item $\phi_{t_i,t_{i+1}}$, $i=0,\dots,\,n-1$, are independent random variables for any $0\leq t_0\leq\cdots\leq t_n\leq T$.
\end{enumerate}
Therefore, for each $0\leq s\leq t\leq T$, the mapping $\phi_{s,t}(\omega)$ has the inverse $\rho_{s,t}(\omega)=\phi_{s,t}(\omega)^{-1}$ which satisfies
\begin{enumerate}[resume]
\item \label{itm:vik} $\rho_{s,t}(\omega):\mr^d\rightarrow\mr^d$ is $l$-times differentiable with respect to $y$, for all $0\leq s\leq t\leq T$, and the derivatives are continuous in $(s,t,y)$,
\item $\rho_{s,t}(\omega)=\rho_{s,r}(\omega)\circ\rho_{r,t}(\omega)$ for all $0\leq s\leq r\leq t\leq T$,
\end{enumerate}
and consequently $\rho_{s,t}$ is a stochastic flow of $C^l$-diffeomorphisms for the backward direction. Indeed, the following holds true:
For any $0\leq s\leq t\leq T$, the process $\rho_{s,t}(y)$ satisfies the backward Stratonovich stochastic differential equation with the terminal condition $y$
\begin{equation*}
\rho_{s,t}(y)=y-\int_s^t b^0\big(\rho_{r,t}(y)\big)\,\dif r-\sum_{k=1}^m\int_s^t b^k\big(\rho_{r,t}(y)\big)\circ \hat{\dif\,}\!B^k_r,
\end{equation*}
where the last term is a backward Stratonovich integral defined by Kunita \cite{kun1} using the time-reversing method. To be more precise, the Brownian motion $B$ is regarded as a backward martingale with respect to its natural two parametric filtration
$$\sigma\big(B_{r_1}-B_{r_2}\semicol\,s\leq r_1,r_2 \leq t\big),\qquad 0\leq s\leq t\leq T,$$
the integral is then defined similarly to the forward case and both stochastic flows $\phi_{s,t}$ as well as $\rho_{s,t}$ are adapted to this filtration. Furthermore, we have a growth control for both forward and backward stochastic flow. Fix arbitrary $\delta\in(0,1)$, then the following convergences hold uniformly in $s,\,t,$ $\prst$-a.s.,
\begin{equation*}\label{vb1}
\lim_{|y|\rightarrow \infty}\frac{|\phi_{s,t}(y)|}{(1+|y|)^{1+\delta}}=0,\quad\qquad\lim_{|y|\rightarrow \infty}\frac{|\rho_{s,t}(y)|}{(1+|y|)^{1+\delta}}=0,
\end{equation*}
\begin{equation*}\label{vb2}
\lim_{|y|\rightarrow \infty}\frac{(1+|y|)^{\delta}}{1+|\phi_{s,t}(y)|}=0,\quad\qquad\lim_{|y|\rightarrow \infty}\frac{(1+|y|)^{\delta}}{1+|\rho_{s,t}(y)|}=0.
\end{equation*}

In the remainder of this subsection we will discuss the stochastic characteristics method where the theory of stochastic flows plays an important role. We restrict our attention to a first-order linear stochastic partial differential equation of the form
\begin{equation}\label{ch}
\begin{split}
\dif v&= b^{0}(y)\cdot\nabla_y v\,\dif t+\sum_{k=1}^m b^{k}(y)\cdot\nabla_y v\circ\dif B^k_t,\\
v(0)&=v_0,
\end{split}
\end{equation}
with coefficients $b^k:\mr^d\rightarrow\mr^d$, $k=0,\dots,\,m.$ The associa\-ted stochastic characteristic system is defined by a system of Stratonovich stochastic differential equations
\begin{equation}\label{sss}
\begin{split}
\dif \phi_t&=b^0(\phi_t)\,\dif t+\sum_{k=1}^m b^k(\phi_t)\circ\dif B^k_t,\\
\end{split}
\end{equation}
A solution of \eqref{sss} starting at $y$ is the so-called stochastic charac\-teristic curve of \eqref{ch} and will be denoted by $\phi_t(y)$. Assume that $b^1,\dots,\,b^m\in C_b^{l+1,\delta}(\mr^d)$ and $b^0\in C_b^{l,\delta}(\mr^d)$ for some $l\geq 3$ and $\delta>0$. If the initial function $v_0$ lies in $C^{l,\delta}(\mr^d)$, then the problem \eqref{ch} has a unique strong solution which is a continuous $C^{l,\varepsilon}$-semimartingale for some $\varepsilon>0$ and is represented by
\begin{equation}\label{explic}
v(t,y)=v_0\big(\phi_t^{-1}(y)\big),\qquad t\in[0,T],
\end{equation}
where the inverse mapping $\phi_t^{-1}$ is well defined according to the previous paragraph. It satisfies \eqref{ch} in the following sense
\begin{equation*}
\begin{split}
v(t,y)=v_0(y)+b^0(y)\cdot\int_0^t \nabla_y v(r,y)\,\dif r+\sum_{k=1}^m b^k(y)\cdot\int_0^t \nabla_y v(r,y)\circ\dif B^k_r.
\end{split}
\end{equation*}
Moreover, if the initial condition $v_0$ is rapidly decreasing then so does the solution itself and
\begin{equation*}\label{decreas}
\stred\sup_{t\in[0,T]}\bigg(\int_{\mr^d}|v(t,y)|(1+|y|)^n\,\dif y\bigg)^p<\infty,\qquad \forall n\in\mn_0,\,p\in[1,\infty).
\end{equation*}

The choice of the Stratonovich integral is more natural here and is given by application of the It\^o-Wentzell-type formula in the proof of the explicit representation of the solution \eqref{explic}. Indeed, in this case it is close to the classical differential rule formula for composite functions (cf. \cite[Theorem I.8.1, Theorem I.8.3]{kunita}).

\section{Solution to the stochastic BGK model}
\label{existence}

This section is devoted to the existence proof for the stochastic BGK model \eqref{bgk}. Let us start with the definition of its solution.


\begin{defin}
Let $\varepsilon>0$.
Then $F^\varepsilon\in L^\infty_{\mathcal{P}}(\Omega\times[0,T]\times\mt^N\times\mr)$
satisfying $F^\varepsilon-\ind_{0>\xi}\in L^1(\Omega\times[0,T]\times\mt^N\times\mr)$ is called a weak solution to the stochastic BGK model \eqref{bgk} with initial condition $F_0^\varepsilon$ provided the following holds true for a.e. $t\in[0,T]$, $\prst$-a.s.,
\begin{equation*}
\begin{split}
&\quad\big\langle F^\varepsilon(t),\varphi\big\rangle=\big\langle F^\varepsilon_0,\varphi\big\rangle+\int_0^t\big\langle F^\varepsilon(s),a\cdot\nabla\varphi\big\rangle\,\dif s\\
&\hspace{-.5cm}+\frac{1}{\varepsilon}\int_0^t\big\langle\ind_{u^\varepsilon(t)>\xi}-F^\varepsilon(t),\varphi(t)\big\rangle\,\dif t+\sum_{k=1}^d\int_0^t\big\langle F^\varepsilon(s),\partial_\xi(g_k\varphi)\big\rangle\,\dif \beta_k(s)\\
&\qquad+\frac{1}{2}\int_0^t\big\langle F^\varepsilon(s),\partial_\xi(G^2\partial_\xi\varphi)\big\rangle\,\dif s.
\end{split}
\end{equation*}
\end{defin}

\begin{rem}
In particular, for any $\varphi\in C^\infty_c(\mt^N\times\mr)$, there exists a representative of $\langle F^\varepsilon(t),\varphi\rangle\in L^\infty(\Omega\times[0,T])$ which is a continuous stochastic process.
\end{rem}

In order to solve the stochastic BGK model \eqref{bgk}, we intend to employ the stochastic characterics method introduced in the previous section. Hence we need to reformulate the problem in Strato\-novich form. It will be seen from the following lemma (see Corollary \ref{ssst}) that on the level of above defined weak solutions the problem \eqref{bgk} is equivalent to
\begin{equation*}
\begin{split}
\dif F^\varepsilon+a(\xi)\cdot\nabla F^\varepsilon\,\dif t&=\frac{\ind_{u^\varepsilon>\xi}-F^\varepsilon}{\varepsilon}\,\dif t-\partial_\xi F^\varepsilon\varPhi\circ\dif W+\frac{1}{4}\partial_\xi F^\varepsilon\partial_\xi G^2\,\dif t,\\
F^\varepsilon(0)&=F^\varepsilon_0.
\end{split}
\end{equation*}

\begin{lemma}\label{prevod}
If $X$ be a $C^1(\mt^N\times\mr)$-valued continuous $(\mf_t)$-semimartingale whose martingale part is given by $-\int_0^t\partial_\xi X\varPhi\,\dif W$, then
\begin{equation}\label{huhu}
-\int_0^t\!\partial_\xi X\varPhi\,\dif W+\frac{1}{2}\int_0^t\!\partial_\xi\big(G^2\partial_\xi X\big)\dif t=-\int_0^t\!\partial_\xi X\varPhi\circ\dif W+\frac{1}{4}\int_0^t\!\partial_\xi X \partial_\xi G^2\dif t.
\end{equation}
Moreover, the same is valid in the sense of distributions as well: let $X$ be a $\mathcal{D}'(\mt^N\times\mr)$-valued continuous $(\mf_t)$-semimartingale whose martingale part is given by $-\int_0^t\partial_\xi X\varPhi\,\dif W$, i.e. $\langle X(t),\varphi\rangle$ is a continuous $(\mf_t)$-semimartingale with martingale part $-\int_0^t\langle\partial_\xi X\varPhi,\varphi\rangle\,\dif W$ for any $\varphi\in C^\infty_c(\mt^N\times\mr)$. Then \eqref{huhu} holds true in $\mathcal{D}'(\mt^N\times\mr)$.

\begin{proof}
We will only prove the second part of the statement as the first one is straightforward and follows similar arguments.
Let us recall the relation between It\^{o} and Stratonovich integrals (see \cite{kunita} or \cite{kun1}). Let $Y$ be a continuous local semimartingale and $\Psi$ be a continuous semimartingale. Then the Stratonovich integral is well defined and satisfies
\begin{equation*}\label{relation}
\int_0^t\Psi\circ\dif Y=\int_0^t\Psi\,\dif Y+\frac{1}{2}\langle\!\langle\Psi,Y\rangle\!\rangle_t,
\end{equation*}
where $\langle\!\langle\cdot,\cdot\rangle\!\rangle_t$ denotes the cross-variation process. Therefore, we need to calculate the cross variation of $-\partial_\xi X g_k$ and the Wiener process $\beta_k$, $k=1,\dots,\,d$. Towards this end, we take a test function $\varphi\in C^\infty_c(\mt^N\times\mr)$ and derive the martingale part of $\langle \partial_\xi X g_k,\varphi\rangle$ (in the following, we emphasize only the corresponding martingale parts).
\begin{equation*}
\begin{split}
\langle X,\varphi\rangle&=\dots-\int_0^t\big\langle\partial_\xi X g_k,\varphi\big\rangle\,\dif \beta_k(s),\\
\langle X,g_k\varphi\rangle&=\dots-\int_0^t\big\langle\partial_\xi X g_k,g_k\varphi\big\rangle\,\dif \beta_k(s),\\
\langle\partial_\xi X, g_k\varphi\rangle&=\dots+\int_0^t\big\langle\partial_\xi X g_k,\partial_\xi(g_k\varphi)\big\rangle\,\dif \beta_k(s),
\end{split}
\end{equation*}
where
\begin{equation*}
\begin{split}
\big\langle\partial_\xi X g_k,\partial_\xi(g_k\varphi)\big\rangle&=-\big\langle\partial_\xi(\partial_\xi X g_k),g_k\varphi\big\rangle\\
&=-\big\langle\partial^2_\xi X g_k^2,\varphi\big\rangle-\frac{1}{2}\big\langle\partial_\xi X\partial_\xi g_k^2,\varphi\big\rangle\\
&=-\big\langle\partial_\xi(g_k^2\partial_\xi X),\varphi\big\rangle+\frac{1}{2}\big\langle\partial_\xi X \partial_\xi g_k^2,\varphi\big\rangle.
\end{split}
\end{equation*}
Consequently
\begin{equation*}
\begin{split}
\big\langle\!\!\big\langle\langle -\partial_\xi X g_k,\varphi\rangle,\beta_k\big\rangle\!\!\big\rangle_t=\int_0^t\big\langle\partial_\xi (g_k^2\partial_\xi X),\varphi\big\rangle\,\dif s-\frac{1}{2}\int_0^t\big\langle\partial_\xi X\partial_\xi g_k^2,\varphi\big \rangle\,\dif s
\end{split}
\end{equation*}
and the claim follows by summing up over $k$.
\end{proof}
\end{lemma}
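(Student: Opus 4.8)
The plan is to reduce the whole identity to the standard It\^o--Stratonovich conversion formula that is recalled in the statement, namely $\int_0^t\Psi\circ\dif Y=\int_0^t\Psi\,\dif Y+\frac12\langle\!\langle\Psi,Y\rangle\!\rangle_t$. Since $W=\sum_{k=1}^d\beta_k e_k$, the noise term splits as $-\partial_\xi X\,\varPhi\,\dif W=-\sum_{k=1}^d\partial_\xi X\,g_k\,\dif\beta_k$, and the Stratonovich integral splits the same way. Testing against an arbitrary $\varphi\in C^\infty_c(\mt^N\times\mr)$, the distributional identity \eqref{huhu} then follows as soon as I compute, for each $k$, the cross-variation $\langle\!\langle\langle-\partial_\xi X g_k,\varphi\rangle,\beta_k\rangle\!\rangle_t$ of the scalar integrand with $\beta_k$, and sum over $k$ using $\sum_k g_k^2=G^2$. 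The scalar ($C^1$-valued) part of the lemma is handled by the identical argument without the distributional pairing, so I would concentrate on the second part.

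The heart of the matter is to read off the martingale part of the scalar semimartingale $\langle-\partial_\xi X g_k,\varphi\rangle=\langle X,\partial_\xi(g_k\varphi)\rangle$. Here I would use the hypothesis directly: by assumption the martingale part of $\langle X,\psi\rangle$ equals $-\sum_k\int_0^t\langle\partial_\xi X g_k,\psi\rangle\,\dif\beta_k$ for every test function $\psi$. Because each $g_k$ is smooth and deterministic, differentiation in $\xi$ commutes with passage to the martingale part; moving the $\xi$-derivative onto the test function by integration by parts and applying the hypothesis with $\psi=\partial_\xi(g_k\varphi)$ identifies the martingale part of the integrand as $-\int_0^t\langle\partial_\xi X g_k,\partial_\xi(g_k\varphi)\rangle\,\dif\beta_k$. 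Its cross-variation with $\beta_k$ is then just the $\dif s$-integral of that integrand.

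What remains is algebraic. I would expand $\langle\partial_\xi X g_k,\partial_\xi(g_k\varphi)\rangle$ by integrating by parts in $\xi$ and applying the product rule $\partial_\xi(g_k^2\partial_\xi X)=g_k^2\partial_\xi^2 X+\partial_\xi X\,\partial_\xi g_k^2$, together with $\partial_\xi g_k\,g_k=\tfrac12\partial_\xi g_k^2$. This splits the bracket into $\langle\partial_\xi(g_k^2\partial_\xi X),\varphi\rangle$ and a term $\tfrac12\langle\partial_\xi X\,\partial_\xi g_k^2,\varphi\rangle$, the latter carrying the decisive factor $\tfrac12$. Summing over $k$, substituting into the conversion formula (which supplies a further factor $\tfrac12$), and rearranging produces the coefficient $\tfrac12$ in front of $\partial_\xi(G^2\partial_\xi X)$ and the coefficient $\tfrac14=\tfrac12\cdot\tfrac12$ in front of $\partial_\xi X\,\partial_\xi G^2$, which is exactly \eqref{huhu} tested against $\varphi$. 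As $\varphi$ is arbitrary, the identity holds in $\mathcal{D}'(\mt^N\times\mr)$.

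I expect the only genuinely delicate point to be the bookkeeping: keeping track of signs through the two integrations by parts, choosing which derivative to transfer, and justifying that $\partial_\xi X$ is itself a semimartingale whose martingale part is obtained by $\xi$-differentiating that of $X$. The regularity built into the hypothesis, $X$ being $C^1$-valued and the $g_k$ smooth with bounded derivatives, is precisely what legitimizes this interchange and makes every pairing well defined; beyond that the computation is routine.
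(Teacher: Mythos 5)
Your proposal is correct and follows essentially the same route as the paper: splitting the noise into the $\beta_k$-components, identifying the martingale part of $\langle -\partial_\xi X g_k,\varphi\rangle$ by applying the hypothesis to the shifted test function $\partial_\xi(g_k\varphi)$, and then performing exactly the integration-by-parts algebra that produces the $\tfrac12\partial_\xi X\,\partial_\xi g_k^2$ correction, which combined with the $\tfrac12$ from the It\^o--Stratonovich conversion yields the coefficient $\tfrac14$. The paper's three-line display computing the martingale parts of $\langle X,\varphi\rangle$, $\langle X,g_k\varphi\rangle$, and $\langle\partial_\xi X,g_k\varphi\rangle$ is just a more explicit rendering of your ``apply the hypothesis with $\psi=\partial_\xi(g_k\varphi)$'' step, so there is no substantive difference.
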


\begin{cor}\label{ssst}
Let $\varepsilon>0$. If $F^\varepsilon\in L^\infty_{\mathcal{P}}(\Omega\times[0,T]\times\mt^N\times\mr)$
is such that $F^\varepsilon-\ind_{0>\xi}\in L^1(\Omega\times[0,T]\times\mt^N\times\mr)$ then it is a weak solution to \eqref{bgk} if and only if, for any $\varphi\in C^\infty_c(\mt^N\times\mr)$, there exists a representative of $\langle F^\varepsilon(t),\partial_\xi(g_k\varphi)\rangle\in L^\infty(\Omega\times[0,T])$ which is a continuous $(\mf_t)$-semimartingale and the following holds true for a.e. $t\in[0,T]$, $\prst$-a.s.,
\begin{equation*}
\begin{split}
&\quad\big\langle F^\varepsilon(t),\varphi\big\rangle=\big\langle F^\varepsilon_0,\varphi\big\rangle+\int_0^t\big\langle F^\varepsilon(s),a\cdot\nabla\varphi\big\rangle\,\dif s\\
&\hspace{-.5cm}+\frac{1}{\varepsilon}\int_0^t\big\langle\ind_{u^\varepsilon(t)>\xi}-F^\varepsilon(t),\varphi(t)\big\rangle\,\dif t+\sum_{k=1}^d\int_0^t\big\langle F^\varepsilon(s),\partial_\xi(g_k\varphi)\big\rangle\circ\dif \beta_k(s)\\
&\qquad-\frac{1}{4}\int_0^t\big\langle F^\varepsilon(s),\partial_\xi (\varphi \partial_\xi G^2)\big\rangle\,\dif s.
\end{split}
\end{equation*}
\end{cor}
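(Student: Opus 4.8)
The plan is to reduce the entire statement to Lemma \ref{prevod}, since the two weak formulations under comparison differ only in their stochastic part. The transport term $\int_0^t\langle F^\varepsilon(s),a\cdot\nabla\varphi\rangle\,\dif s$ and the collision term $\frac1\varepsilon\int_0^t\langle\ind_{u^\varepsilon>\xi}-F^\varepsilon,\varphi\rangle\,\dif t$ appear verbatim in both, so everything comes down to showing that the It\^o stochastic contribution
\begin{equation*}
\sum_{k=1}^d\int_0^t\!\big\langle F^\varepsilon,\partial_\xi(g_k\varphi)\big\rangle\,\dif\beta_k+\tfrac12\int_0^t\!\big\langle F^\varepsilon,\partial_\xi(G^2\partial_\xi\varphi)\big\rangle\,\dif s
\end{equation*}
coincides, as a process, with the Stratonovich contribution carrying the $-\tfrac14$ correction. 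This is exactly \eqref{huhu} applied to $X=F^\varepsilon$, tested against $\varphi$: two integrations by parts in $\xi$ turn $\frac12\langle\partial_\xi(G^2\partial_\xi F^\varepsilon),\varphi\rangle$ into $\frac12\langle F^\varepsilon,\partial_\xi(G^2\partial_\xi\varphi)\rangle$ and $\frac14\langle\partial_\xi F^\varepsilon\,\partial_\xi G^2,\varphi\rangle$ into $-\frac14\langle F^\varepsilon,\partial_\xi(\varphi\partial_\xi G^2)\rangle$, while the martingale parts match pathwise since $\langle F^\varepsilon,\partial_\xi(g_k\varphi)\rangle=-\langle\partial_\xi F^\varepsilon g_k,\varphi\rangle$.

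For the forward implication I would argue as follows. Assuming the It\^o weak formulation, the right-hand side is, for every test function $\psi\in C^\infty_c(\mt^N\times\mr)$, a continuous $(\mf_t)$-semimartingale: a constant, plus bounded-variation terms, plus the It\^o integral $\sum_k\int_0^t\langle F^\varepsilon,\partial_\xi(g_k\psi)\rangle\,\dif\beta_k$. Hence $\langle F^\varepsilon(t),\psi\rangle$ is a continuous semimartingale whose martingale part equals that It\^o integral, which after integrating by parts is $-\int_0^t\langle\partial_\xi F^\varepsilon\varPhi,\psi\rangle\,\dif W$. Thus $F^\varepsilon$ is exactly a $\mathcal{D}'(\mt^N\times\mr)$-valued continuous semimartingale of the form required by the distributional half of Lemma \ref{prevod}. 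Taking in particular $\psi=\partial_\xi(g_k\varphi)$, which is again a legitimate test function because $g_k\varphi$ has compact support in $\xi$, shows that $\langle F^\varepsilon(t),\partial_\xi(g_k\varphi)\rangle$ has a continuous-semimartingale representative, so the Stratonovich integral against $\beta_k$ is well defined. Substituting \eqref{huhu} into the It\^o formulation then yields the Stratonovich one.

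For the converse I would assume the Stratonovich formulation together with the stated semimartingale property of $\langle F^\varepsilon(t),\partial_\xi(g_k\varphi)\rangle$. Then $\langle F^\varepsilon(t),\varphi\rangle$ is again a continuous semimartingale, and its martingale part is extracted by removing the bounded-variation cross-variation correction from the Stratonovich integral: since $\langle F^\varepsilon,\partial_\xi(g_k\varphi)\rangle$ is a bounded adapted semimartingale, the It\^o integral $\sum_k\int_0^t\langle F^\varepsilon,\partial_\xi(g_k\varphi)\rangle\,\dif\beta_k=-\int_0^t\langle\partial_\xi F^\varepsilon\varPhi,\varphi\rangle\,\dif W$ is that martingale part. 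So $F^\varepsilon$ again verifies the hypotheses of Lemma \ref{prevod}, and reading \eqref{huhu} from right to left converts the Stratonovich stochastic part back into the It\^o one, giving the It\^o weak formulation.

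The main obstacle I anticipate is not the algebra but the interplay between the distributional semimartingale structure and the well-definedness of the Stratonovich integral: one must confirm that the object fed into Lemma \ref{prevod} genuinely has martingale part $-\int_0^t\langle\partial_\xi F^\varepsilon\varPhi,\varphi\rangle\,\dif W$, and that $\langle F^\varepsilon(t),\partial_\xi(g_k\varphi)\rangle$ really is a continuous semimartingale so that the cross-variation $\langle\!\langle\cdot,\beta_k\rangle\!\rangle$ defining $\circ\,\dif\beta_k$ exists — which is precisely why this property is imposed in the statement. The remaining points are routine: the integrations by parts in $\xi$ are licit because $\varphi$ has compact support while $g_k$, $\partial_\xi g_k$, and hence $\partial_\xi G^2$ grow at most linearly, so every pairing is finite for $F^\varepsilon\in L^\infty_{\mathcal{P}}$ with $F^\varepsilon-\ind_{0>\xi}\in L^1$.
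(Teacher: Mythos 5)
Your proposal is correct and coincides with the paper's own (largely implicit) argument: the paper derives Corollary~\ref{ssst} exactly by applying the distributional half of Lemma~\ref{prevod} to $F^\varepsilon$, whose martingale part $-\int_0^t\langle\partial_\xi F^\varepsilon\varPhi,\varphi\rangle\,\dif W$ is read off from the It\^o (resp.\ Stratonovich, after discarding the bounded-variation correction) weak formulation, with the $\xi$-integrations by parts converting $\tfrac12\partial_\xi(G^2\partial_\xi F^\varepsilon)$ and $\tfrac14\partial_\xi F^\varepsilon\partial_\xi G^2$ into the stated pairings. The only point worth flagging is cosmetic: $\partial_\xi(g_k\varphi)$ is compactly supported but only of class $C^{3,\eta}$, so using it as a test function formally requires a routine density or mollification step, a step the paper itself also glosses over (e.g.\ when pairing with $g_k\varphi$ in the proof of Lemma~\ref{prevod}).
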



As the first step in order to show the existence of a solution to the stochastic BGK model, we shall study the following auxiliary problem:
\begin{equation}\label{rov1}
\begin{split}
\dif X+a(\xi)\cdot\nabla X\,\dif t&=-\partial_\xi X\varPhi\circ\dif W+\frac{1}{4}\partial_\xi X\partial_\xi G^2\,\dif t,\\
X(s)&=X_0.
\end{split}
\end{equation}
It will be shown in Corollary \ref{weaksol1} that this problem possesses a unique weak solution provided $X_0\in  L^\infty(\Omega\times\mt^N\times\mr)$. Let
$$\mathcal{S}=\{\mathcal{S}(t,s);\,0\leq s\leq t\leq T\}$$
be its solution operator, i.e. for any $0\leq s\leq t\leq T$ we define $\mathcal{S}(t,s)X_0$ to be the solution to \eqref{rov1}. Then we have the following existence result for the stochastic BGK model.

\begin{thm}\label{duhamel}
For any $\varepsilon>0$, there exists a unique weak solution of the stochastic BGK model \eqref{bgk} and is represented by
\begin{equation}\label{sol1}
F^\varepsilon(t)=\me^{-\frac{t}{\varepsilon}}\mathcal{S}(t,0)F_0^\varepsilon+\frac{1}{\varepsilon}\int_0^t\me^{-\frac{t-s}{\varepsilon}}\mathcal{S}(t,s)\ind_{u^\varepsilon(s)>\xi}\,\dif s.
\end{equation}
\end{thm}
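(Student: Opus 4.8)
The plan is to read \eqref{sol1} as a Duhamel (variation-of-constants) formula in which the loss part $-F^\varepsilon/\varepsilon$ of the collision operator is absorbed into the integrating factor $\me^{-t/\varepsilon}$, while the gain part $\varepsilon^{-1}\ind_{u^\varepsilon>\xi}$ is treated as a source propagated by the solution operator $\mathcal{S}$ of the purely transport--type problem \eqref{rov1}. The essential difficulty is that \eqref{sol1} is only \emph{formally} explicit: the local density $u^\varepsilon$ on the right-hand side is itself recovered from $F^\varepsilon$ through $u^\varepsilon(s,\cdot)=\int_\mr\big(F^\varepsilon(s,\cdot,\xi)-\ind_{0>\xi}\big)\dif\xi$. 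Hence I would not verify \eqref{sol1} directly but rather set it up as a fixed-point problem for $u^\varepsilon$.

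First I would check that, for an arbitrary but \emph{frozen} predictable density $u$, the right-hand side of \eqref{sol1} (with $u$ in place of $u^\varepsilon$) defines a weak solution of the \emph{linear} equation obtained from \eqref{bgk} by freezing the gain term. Using the semimartingale Stratonovich weak formulation of Corollary \ref{ssst} and the defining property of $\mathcal{S}(t,s)$ as the weak solution operator of \eqref{rov1} (Corollary \ref{weaksol1}), this is a routine Duhamel computation: testing against $\varphi\in C^\infty_c(\mt^N\times\mr)$, the $t$-derivative of $\me^{-t/\varepsilon}\mathcal{S}(t,0)F_0^\varepsilon$ produces the transport and noise terms of \eqref{rov1} together with the loss $-\varepsilon^{-1}F^\varepsilon$, while differentiating the convolution integral contributes the boundary value $\varepsilon^{-1}\ind_{u(t)>\xi}$. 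Since $\mathcal{S}(t,s)$ is composition with a (random) diffeomorphism it is an $L^\infty_{x,\xi}$-isometry, and because $\me^{-t/\varepsilon}+\varepsilon^{-1}\int_0^t\me^{-(t-s)/\varepsilon}\dif s=1$ the resulting $F^\varepsilon$ is a convex-type combination with $\|F^\varepsilon(t)\|_{L^\infty_{x,\xi}}\le\max\{\|F_0^\varepsilon\|_{L^\infty_{x,\xi}},1\}=1$, so $F^\varepsilon\in L^\infty_{\mathcal{P}}$.

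To close the coupling I would introduce the map $\Gamma$ sending a predictable $u$ to $\int_\mr\big(F[u]-\ind_{0>\xi}\big)\dif\xi$, where $F[u]$ is the frozen-source expression above, and look for a fixed point. For two inputs $u_1,u_2$ the gain parts cancel against the initial term, so that $F[u_1]-F[u_2]=\varepsilon^{-1}\int_0^t\me^{-(t-s)/\varepsilon}\mathcal{S}(t,s)\big(\ind_{u_1(s)>\xi}-\ind_{u_2(s)>\xi}\big)\dif s$. The pointwise identity $\int_\mr|\ind_{u_1>\xi}-\ind_{u_2>\xi}|\dif\xi=|u_1-u_2|$, together with the boundedness of $\mathcal{S}(t,s)$ on $L^1_{x,\xi}$ and the estimate $\|\Gamma(u_1)(t)-\Gamma(u_2)(t)\|_{L^1_x}\le\|F[u_1](t)-F[u_2](t)\|_{L^1_{x,\xi}}$, yields a linear Volterra inequality $\stred\|\Gamma(u_1)(t)-\Gamma(u_2)(t)\|_{L^1_x}\le C\varepsilon^{-1}\int_0^t\stred\|u_1(s)-u_2(s)\|_{L^1_x}\dif s$. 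Iterating this inequality makes some power of $\Gamma$ a contraction on $L^1(\Omega\times[0,T]\times\mt^N)$, whence a unique fixed point $u^\varepsilon$ and the sought solution $F^\varepsilon=F[u^\varepsilon]$; the same Grönwall inequality forces any two weak solutions to coincide, giving uniqueness, since every weak solution can be shown to satisfy \eqref{sol1}.

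I expect the genuinely delicate points to be the two $L^1$-type facts on which the contraction rests. The first is the boundedness of $\mathcal{S}(t,s)$ on $L^1_{x,\xi}$: because the characteristic flow moves the velocity variable $\xi$ over the unbounded range $\mr$, this requires uniform control of the Jacobian of the (inverse) stochastic flow, which I would extract from the growth and diffeomorphism estimates of Section \ref{flows}. The second, related, point is to verify that $f^\varepsilon=F^\varepsilon-\ind_{0>\xi}$ indeed lies in $L^1(\Omega\times[0,T]\times\mt^N\times\mr)$; writing $\ind_{0>\xi}$ itself in Duhamel form, this reduces to the integrability of $\mathcal{S}(t,s)\chi_{u^\varepsilon(s)}$ (immediate once $\mathcal{S}$ is $L^1$-bounded) plus the integrability of the displacement $\mathcal{S}(t,s)\ind_{0>\xi}-\ind_{0>\xi}$ of the reference half-space indicator, the latter again controlled by the flow estimates. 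Extending $\mathcal{S}$ from the smooth data of Section \ref{flows} to the merely bounded datum $\ind_{u^\varepsilon>\xi}$ by approximation is a further technical step that I would handle through Corollary \ref{weaksol1}.
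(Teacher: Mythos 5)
Your proposal is correct and follows essentially the same route as the paper: a Duhamel/variation-of-constants reformulation solved by a Banach fixed point, resting on the same three pillars --- the unit $L^1$-operator norm of $\mathcal{S}$ (Proposition \ref{oper}, Corollary \ref{weaksol1}), the identity $\int_\mr|\ind_{\alpha>\xi}-\ind_{\beta>\xi}|\,\dif\xi=|\alpha-\beta|$, and the displacement estimate of Corollary \ref{indik} for the $\xi$-integrability (this last point, which you rightly flag as delicate, is supplied by the structural hypotheses \eqref{nula}/\eqref{omez} rather than by generic flow estimates). The only differences are cosmetic: the paper contracts directly, with constant $1-\me^{-T/\varepsilon}$, on $h^\varepsilon(t)=F^\varepsilon(t)-\mathcal{S}(t,0)\ind_{0>\xi}$ in $L^\infty(0,T;L^1(\Omega\times\mt^N\times\mr))$, whereas you run the fixed point at the level of the local density and iterate a Volterra inequality --- both are valid, and your iteration even tolerates a non-unit $L^1$ bound on $\mathcal{S}$.
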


%

The proof of Theorem \ref{duhamel} will be divided into several steps. First, we have to concentrate on the problem \eqref{rov1}.

\subsection{Application of the stochastic characteristics method}
\label{appl}

In this subsection, we prove the existence of a unique solution to \eqref{rov1} and study the behavior of the solution operator $\mathcal{S}$. The equation \eqref{rov1} is a first-order linear stochastic partial differential equation of the form \eqref{ch}, however, the coefficient $a$, as well as $\partial_\xi G^2$ in the case of \eqref{nula}, is not supposed to have bounded derivatives. For this purpose we introduce the following truncated problem: let $(k_R)$ be a smooth truncation on $\mr$, i.e. let $k_R(\xi)=k(R^{-1}\xi)$, where $k$ is a smooth function with compact support satisfying $0\leq k\leq 1$ and
$$k(\xi)=\begin{cases}
            1,& \text{if }\;|\xi|\leq\frac{1}{2},\\
	    0,& \text{if }\;|\xi|\geq1,
           \end{cases}
$$
and define
$g^R_k(x,\xi)=g_k(x,\xi)k_R(\xi),\,k=1,\dots, d$, and $a^R(\xi)=a(\xi)k_R(\xi)$. Coefficients $\varPhi^R$ and $G^{R,2}$, respectively, can be defined similarly as $\varPhi$ and $G^2$, respectively, using $g_k^R$ instead of $g_k$.\footnote{For notational simplicity we write $G^{R,2}$ as an abbreviation for $\big(G^R\big)^2$ and similarly $g_k^{R,2}$ instead of $\big(g_k^R\big)^2$.}
Then
\begin{equation}\label{rov10}
\begin{split}
\dif X+a^R(\xi)\cdot\nabla X\,\dif t&=-\partial_\xi X\varPhi^R\circ\dif W+\frac{1}{4}\partial_\xi X\partial_\xi G^{R,2}\,\dif t,\\
X(s)&=X_0
\end{split}
\end{equation}
can be solved by the method of stochastic characteristics. Indeed, the stochastic characteristic system associa\-ted with \eqref{rov10} is defined by the following system of Stratonovich's stochastic differential equations
\begin{equation}\label{char}
\begin{split}
\dif\varphi^0_t&=-\frac{1}{4}\partial_\xi G^{R,2}(\varphi_t)\,\dif t+\sum_{k=1}^dg^R_k(\varphi_t)\circ\dif\beta_k(t),\\
\dif\varphi^i_t&=a^R_i(\varphi^0_t)\,\dif t,\qquad i=1,\dots,N,\\
\end{split}
\end{equation}
where the processes $\varphi_t^0$ and $\varphi_t^i,\,i=1,\dots,N,$ respectively, describe the evolution of the $\xi$-coordinate and $x^i$-coordinate, $\,i=1,\dots,N,$ respectively, of the characteristic curve.

 
Let us denote by $\varphi^R_{s,t}(x,\xi)$ the solution of \eqref{char} starting from $(x,\xi)$ at time $s$. Then $\varphi^R$ defines a stochastic flow of $C^3$-diffeomorphisms and we denote by $\psi^R$ the corresponding inverse flow. It is the solution to the backward problem
\begin{equation}\label{char22}
\begin{split}
\dif\psi^0_t&=\frac{1}{4}\partial_\xi G^{R,2}(\psi_t)\,\hat{\dif\,}\! t-\sum_{k=1}^dg^R_k(\psi_t)\circ\hat{\dif\,}\!\beta_k(t),\\
\dif\psi^i_t&=-a^R_i(\psi^0_t)\,\hat{\dif\,}\!t,\qquad i=1,\dots,N.
\end{split}
\end{equation}

\begin{rem}
Note, that unlike the deterministic BGK model (i.e. $g_k=0,$ $k=1,\dots,\,d$), the stochastic case is not time homogeneous: $\varphi^R_{s,t}\neq\varphi^R_{0,t-s}$.
\end{rem}

\begin{prop}\label{aux}
Let $R>0$. If $X_0\in C^{3,\eta}(\mt^N\times\mr)$ almost surely,\footnote{$\eta>0$ is the H\"older exponent from Section \ref{setting}.} there exists a unique strong solution to \eqref{rov10} which is a continuous $C^{3,\vartheta}$-semimartingale for some $\vartheta>0$, i.e. it satisfies \eqref{rov10} in the following sense
\begin{equation*}
\begin{split}
X(t,x,\xi;s)&=X_0(x,\xi)-a^R(\xi)\cdot \int_s^t \nabla X(r,x,\xi;s)\,\dif r\\
&\qquad-\sum_{k=1}^d g_k^R(x,\xi)\int_s^t\partial_\xi X(r,x,\xi;s)\circ\dif \beta_k(r)\\
&\qquad+\frac{1}{4}\partial_\xi G^{R,2}(x,\xi)\int_s^t\partial_\xi X(r,x,\xi;s)\,\dif r,
\end{split}
\end{equation*}
Moreover, it is represented by
\begin{equation*}
X(t,x,\xi;s)=X_0\big(\psi^R_{s,t}(x,\xi)\big).
\end{equation*}

\begin{proof}
The above representation formula corresponds to \eqref{explic}. It can be shown in a straightforward manner using the It\^o-Wentzell formula (see \cite[Theorem 6.1.9]{kun1}).
\end{proof}
\end{prop}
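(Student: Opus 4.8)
The plan is to recognize \eqref{rov10} as an instance of the first-order linear stochastic transport equation \eqref{ch} and to invoke the stochastic characteristics method recorded in Section \ref{flows}. Here the spatial variable is $y=(x,\xi)\in\mt^N\times\mr$, so that $d=N+1$ and the number of driving Brownian motions equals $d$; the transport is governed by $a^R$ in the $x$-directions and by the Stratonovich drift $\tfrac14\partial_\xi G^{R,2}$ together with the noise coefficients $g_k^R$ in the $\xi$-direction. With these data the associated characteristic system \eqref{sss} is precisely \eqref{char}, whose forward flow is $\varphi^R_{s,t}$ and whose inverse is $\psi^R_{s,t}$. Since $a^R$ is independent of $x$ and the $g_k^R$ are $\mt^N$-periodic in $x$, the coefficients lift to $\mr^{N+1}$-coefficients that are periodic in the first $N$ variables; the generated flow preserves this periodicity, and hence the whole argument descends from $\mr^{N+1}$ to $\mt^N\times\mr$. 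I would dispose of this reduction at the outset.

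First I would verify the regularity hypotheses of the method, which is exactly the point where the truncation $k_R$ is indispensable. Because $A\in C^{4,\eta}$ (so that $a\in C^{3,\eta}$) and each $g_k\in C^{4,\eta}$ with bounded derivatives of all orders, multiplication by the smooth compactly supported cut-off $k_R$ renders $a^R=a\,k_R$, the functions $g_k^R=g_k k_R$, and consequently $G^{R,2}$ and $\partial_\xi G^{R,2}$, compactly supported of the same H\"older class. In particular the noise fields belong to $C_b^{4,\eta}$ and the drift field to $C_b^{3,\eta}$, so the assumptions ``$b^1,\dots,b^m\in C_b^{l+1,\delta}$, $b^0\in C_b^{l,\delta}$'' of Section \ref{flows} hold with $l=3$ and $\delta=\eta$. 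Together with the hypothesis $X_0\in C^{3,\eta}(\mt^N\times\mr)=C^{l,\delta}$, the quoted theory then delivers a unique strong solution that is a continuous $C^{3,\vartheta}$-semimartingale for some $\vartheta>0$ and is given by the representation \eqref{explic}, which here reads $X(t,x,\xi;s)=X_0\big((\varphi^R_{s,t})^{-1}(x,\xi)\big)=X_0\big(\psi^R_{s,t}(x,\xi)\big)$.

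It then remains to confirm that this candidate satisfies \eqref{rov10} in the integral Stratonovich sense asserted in the statement. I would obtain this through the It\^o--Wentzell formula \cite[Theorem 6.1.9]{kun1}: applying that formula to the composition of the spatially smooth, time-deterministic field $X_0$ with the backward semimartingale flow $\psi^R_{s,t}$ and inserting its backward dynamics \eqref{char22}, one computes $\dif X_0(\psi^R_{s,t})$ and reads off, term by term, the transport contribution $-a^R\cdot\nabla X$, the martingale part $-\partial_\xi X\,\varPhi^R\circ\dif W$, and the Stratonovich drift $\tfrac14\partial_\xi X\,\partial_\xi G^{R,2}$. Equivalently, one verifies that $t\mapsto X(t,\varphi^R_{s,t}(y);s)=X_0(y)$ is constant along characteristics; this both identifies the solution and, by the diffeomorphism property of $\varphi^R_{s,t}$, yields uniqueness.

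The main obstacle is organizational rather than analytic: once the coefficient regularity secured by the cut-off is in place, existence, uniqueness, the $C^{3,\vartheta}$-semimartingale regularity and the representation are furnished verbatim by Kunita's theory, and the only genuine computation is the It\^o--Wentzell expansion. Its sole delicacy is to keep the Stratonovich corrections and the signs in \eqref{char}--\eqref{char22} consistent, so that the first-order cross terms reassemble exactly into $-\partial_\xi X\,\varPhi^R\circ\dif W+\tfrac14\partial_\xi X\,\partial_\xi G^{R,2}\,\dif t$. Because the Stratonovich formulation makes the flow-composition rule formally identical to the classical chain rule for composite functions, this matching is routine, which is precisely why the representation can be said to follow ``in a straightforward manner.''
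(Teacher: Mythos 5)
Your proposal is correct and takes essentially the same route as the paper, whose proof likewise reduces \eqref{rov10} to Kunita's stochastic characteristics theory of Section \ref{flows} (with the flow \eqref{char}, its inverse \eqref{char22}) and obtains the representation $X=X_0\big(\psi^R_{s,t}\big)$ via the It\^o--Wentzell formula \cite[Theorem 6.1.9]{kun1}. The details you supply --- verifying $b^k\in C_b^{4,\eta}$, $b^0\in C_b^{3,\eta}$ after the cut-off (so $l=3$, $\delta=\eta$), the periodic lift from $\mt^N\times\mr$ to $\mr^{N+1}$, and uniqueness via constancy of $X\big(t,\varphi^R_{s,t}(y);s\big)$ --- are precisely the hypotheses and computations the paper leaves implicit.
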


%


It is obvious, that the domain of definition of the solution operator to \eqref{rov10}, hereafter denoted by $\mathcal{S}^R$, can be extended to more general functions which do not necessarily fulfil the assumptions of Proposition \ref{aux}. In this case, we define consistently
$$\mathcal{S}^R(t,s)X_0=X_0\big(\psi^R_{s,t}(x,\xi)\big),\qquad 0\leq s\leq t\leq T.$$
Since diffeomorphisms preserve sets of measure zero the above is well defined also if $X_0$ is only defined almost everywhere. The resulting process cannot be a strong solution to \eqref{rov10}, however, as it will be seen in Corollary \ref{weaksol} it can still satisfy \eqref{rov10} in a weak sense. In the following proposition we establish basic properties of the ope\-rator $\mathcal{S}^R.$

\begin{prop}\label{oper}
Let $R>0$. Let $\mathcal{S}^R=\{\mathcal{S}^R(t,s),0\leq s\leq t\leq T\}$ be defined as above. Then
\begin{enumerate}
 \item\label{item1} $\mathcal{S}^R$ is a family of bounded linear operators on $L^1(\Omega\times\mt^N\times\mr)$ having unit operator norm, i.e. for any $X_0\in L^1(\Omega\times\mt^N\times\mr)$, $0\leq s\leq t\leq T$,
\begin{equation}\label{first} 
 \big\|\mathcal{S}^R(t,s)X_0\big\|_{L^1_{\omega,x,\xi}}\leq\|X_0\|_{L^1_{\omega,x,\xi}},
\end{equation}
\item\label{item4} $\mathcal{S}^R$ verifies the semigroup law
\begin{equation*}
\begin{split}
\mathcal{S}^R(t,s)&=\mathcal{S}^R(t,r)\circ\mathcal{S}^R(r,s),\qquad 0\leq s\leq r\leq t\leq T,\\
\mathcal{S}^R(s,s)&=\mathrm{Id},\hspace{3.79cm} 0\leq s\leq T.
\end{split}
\end{equation*}
\end{enumerate}

\begin{proof}
Fix arbitrary $0\leq s\leq t\leq T$. The linearity of $\mathcal{S}^R(t,s)$ follows easily from its definition.
In order to prove \eqref{first}, we will proceed in several steps. First, we make an additional assumption upon the initial condition $X_0$, namely,
\begin{equation}\label{addit}
X_0\in L^1(\Omega\times\mt^N\times\mr)\cap L^\infty(\Omega\times\mt^N\times\mr).
\end{equation}
Let us now consider a suitable smooth approximation of $X_0$. In particular, let $(h_\delta)$ be an approximation to the identity on $\mt^N\times\mr$, and $(k_\delta)$ a smooth truncation on $\mr$, i.e. define $k_\delta(\xi)=k(\delta\xi)$, where $k$ was defined at the beginning of this subsection.
Then the regularization $X_0^\delta$, defined in the following way
$$X_0^\delta(\omega)=\big(X_0(\omega)*h_\delta\big)k_\delta,$$
is bounded, pathwise smooth and compactly supported and
\begin{equation}\label{mm}
X_0^\delta\longrightarrow X_0\quad\text{in}\quad L^1(\Omega\times\mt^N\times\mr);\qquad \big\|X_0^\delta\big\|_{L^1_{\omega,x,\xi}}\leq\|X_0\|_{L^1_{\omega,x,\xi}}.
\end{equation}
Furthermore, also all the partial derivatives $\partial_\xi X_0^\delta,\,\partial_{x_i} X_0^\delta,\,i=1,\dots,\,N,$ are bounded, pathwise smooth and compactly supported.

Next, the process $X^\delta=\mathcal{S}^R(t,s)X_0^\delta$ is the unique strong solution to \eqref{rov10} or equivalently
\begin{equation}\label{rov20}
\begin{split}
\dif X+a^R(\xi)\cdot\nabla X\,\dif t&=-\partial_\xi X\varPhi^R\,\dif W+\frac{1}{2}\partial_\xi\big(G^{R,2}\partial_\xi X\big)\,\dif t,\\
X(s)&=X_0^\delta
\end{split}
\end{equation}
which follows by a similar approach as in Lemma \ref{prevod}.
For any $x\in\mt^N,\,\xi\in\mr$, the above stochastic integral is a well defined martingale with zero expected value. Indeed, for each $k=1,\dots,\,d$, we have\footnote{By $\nabla_{x,\xi}$ we denote the gradient with respect to the variables $x,\,\xi$.}
\begin{equation*}
\begin{split}
\stred\int_s^T\big|\partial_\xi X^\delta g^R_k(x,\xi)\big|^2\,\dif r&=C\,\stred\int_s^T\big|\nabla_{x,\xi}X_0^\delta\big(\psi^R_{s,r}(x,\xi)\big)\cdot\partial_\xi\psi^R_{s,r}(x,\xi)\big|^2\,\dif r\\
&\leq C\,\stred\int_s^T\big|\partial_\xi\psi^R_{s,r}(x,\xi)\big|^2\,\dif r<\infty
\end{split}
\end{equation*}
since $g_k^R$ is bounded and the process $\partial_\xi\psi^R_{s,r}(x,\xi)$ solves a backward bilinear stochastic differential equation with bounded coefficients (see \cite[Theorem 4.6.5]{kun1}) and therefore possesses moments of any order which are bounded in $0\leq s\leq r\leq T,\,x\in\mt^N,\,\xi\in\mr$. Nevertheless, we point out the same is not generally true without the assumption \eqref{addit}. In this case, the stochastic integral can happen to be a local martingale only, which would significantly complicate the subsequent steps.

We intend to integrate the equation \eqref{rov20} with respect to the variables $\omega,x,\xi$ and expect the stochastic integral to vanish. Towards this end, it is needed to verify the interchange of integrals with respect to $x,\,\xi$ and the stochastic one. We make use of the stochastic Fubini theorem \cite[Theorem 4.18]{daprato}. In order to verify its assumptions, the following quantity
\begin{equation*}
\begin{split}
\int_{\mt^N}&\int_\mr\bigg(\stred\int_s^T\big|\partial_\xi X^\delta g^R_k(x,\xi)\big|^2\,\dif r\bigg)^\frac{1}{2}\dif \xi\,\dif x\\
&\!=\int_{\mt^N}\int_\mr|g^R_k(x,\xi)|\bigg(\stred\int_s^T\big|\nabla_{x,\xi} X_0^\delta\big(\psi^R_{s,r}(x,\xi)\big)\cdot\partial_\xi\psi^R_{s,r}(x,\xi) \big|^2\,\dif r\bigg)^\frac{1}{2}\dif \xi\,\dif x
\end{split}
\end{equation*}
should be finite. Recall that $g^R_k,\,k=1,\dots,\,d,$ are bounded and the moments of $\partial_\xi\psi^R_{s,r}(x,\xi)$ are finite and bounded in $s,\,r,\,x,\,\xi.$ Thus, since $\nabla_{x,\xi}X_0^\delta$ is bounded and pathwise compactly supported it is sufficient to show that so does $\nabla_{x,\xi}X_0^\delta\big(\psi^R_{s,r}(x,\xi)\big)$. However, this fact follows immediately from the growth control on the stochastic flow $\psi^R$. Indeed, all the assertions of \cite[Section 4.5]{kun1}, in particular Exercise 4.5.9 and 4.5.10, can be modified in order to obtain corresponding results for the component $\psi^{R,0}_{s,r}$ only. Hence, for any $\eta\in(0,1)$, we have uniformly in $s,\,r,\,x,\,\prst$-a.s.,
\begin{equation*}\label{ada}
\lim_{|\xi|\rightarrow\infty}\frac{|\psi^{R,0}_{s,r}(x,\xi)|}{(1+|\xi|)^{1+\eta}}=0,\qquad\lim_{|\xi|\rightarrow\infty}\frac{(1+|\xi|)^\eta}{1+|\psi^{R,0}_{s,r}(x,\xi)|}=0.
\end{equation*}
Consequently, it yields: for any fixed $L>0$, there exists $l>0$ such that if $|\xi|>l$ then it holds uniformly in $s,\,r,\,x$, $\prst$-a.s.,
$$(1+|\xi|)^\eta\leq L(1+|\psi^{R,0}_{s,r}(x,\xi)|).$$
The support of $X_0^\delta$ as well as $\nabla_{x,\xi}X_0^\delta$ in the variable $\xi$ is included in $[-\frac{1}{\delta},\frac{1}{\delta}]$. Therefore, if in addition $(1+|\xi|)^\eta> L(1+\frac{1}{\delta})$ then $|\psi^{R,0}_{s,r}(x,\xi)|>\frac{1}{\delta}$ for all $s,\,r,\,x$, $\prst$-a.s., and accordingly $\nabla_{x,\xi}X_0^\delta\big(\psi^R_{s,r}(x,\xi)\big)=0$ for all $s,\,r,\,x$, $\prst$-a.s.. As a consequence, the stochastic Fubini theorem can be applied.

Therefore, integrating the equation \eqref{rov20} with respect to $\omega,x,\xi$ yields
\begin{equation*}
\begin{split}
&\stred\int_{\mt^N}\int_{\mr}X^\delta(t,x,\xi)\,\dif \xi\,\dif x+\stred\int_s^t\int_\mr a^R(\xi)\cdot\int_{\mt^N}\nabla X^\delta(r,x,\xi)\,\dif x\,\dif \xi\,\dif r\\
&\quad=\stred\int_{\mt^N}\int_{\mr}X^\delta_0\,\dif \xi\,\dif x+\frac{1}{2}\stred\int_s^t\int_{\mt^N}\int_\mr\partial_\xi\big(G^{R,2}(x,\xi)\partial_\xi X^\delta(r,x,\xi)\big)\,\dif \xi\,\dif x\,\dif r
\end{split}
\end{equation*}
where the second term on the left hand side vanishes due to periodic boundary conditions and the second one on the right hand side due to the compact support of $G^{R,2}$ in $\xi$. Hence we obtain
\begin{equation*}
\stred\int_{\mt^N}\int_{\mr}\mathcal{S}^R(t,s)X_0^\delta\,\dif \xi\,\dif x=\stred\int_{\mt^N}\int_{\mr}X^\delta_0\,\dif \xi\,\dif x
\end{equation*}
where the integrals on both sides are finite. Note, that if $X^\delta_0$ is nonnegative (nonpositive) then also $\mathcal{S}^R(t,s)X^\delta_0$ stays nonnegative (nonpositive). Therefore,
$$\big(\mathcal{S}^R(t,s)X_0^\delta\big)^+=\mathcal{S}^R(t,s)(X_0^\delta)^+,\qquad\big(\mathcal{S}^R(t,s)X_0^\delta\big)^-=\mathcal{S}^R(t,s)(X_0^\delta)^-,$$
and by splitting the initial data into positive and negative part we obtain that \eqref{first} is satisfied with equality in this case.

In addition to \eqref{mm}, also the convergence $\mathcal{S}^R(t,s)X_0^\delta\rightarrow\mathcal{S}^R(t,s)X_0$ holds true in $L^1(\Omega\times\mt^N\times\mr)$. Indeed, let us fix $\delta_1,\,\delta_2\in(0,1)$. Then \eqref{first} is also fulfilled by $X_0^{\delta_1}-X_0^{\delta_2}$ hence the set $\{\mathcal{S}^R(t,s)X_0^\delta;\,\delta\in(0,1)\}$ is Cauchy in $L^1(\Omega\times\mt^N\times\mr)$ and the limit is necessarily $\mathcal{S}^R(t,s)X_0$ since diffeomorphisms preserve sets of zero measure. Finally, application of the Fatou lemma gives \eqref{first} for $X_0$.

As the next step, we avoid the hypothesis \eqref{addit}. Let $X_0\in L^1(\Omega\times \mt^N\times\mr)$ and consider the following approximations
$$X_0^n=X_0\,\ind_{|X_0|\leq n},\qquad n\in\mn.$$
Then clearly
$$X_0^n\longrightarrow X_0\quad\text{in}\quad L^1(\Omega\times\mt^N\times\mr),\qquad\big\|X_0^n\big\|_{L^1_{\omega,x,\xi}}\leq \|X_0\|_{L^1_{\omega,x,\xi}}$$
and $X_0^n\in L^\infty(\Omega\times\mt^N\times\mr)$ hence the estimate \eqref{first} is valid for all $X_0^n$. As above, it is possible to show that $\mathcal{S}^R(t,s) X_0^n\rightarrow \mathcal{S}^R(t,s)X_0$ in $L^1(\Omega\times\mt^N\times\mr)$ and by the lower semicontinuity of the norm we obtain the claim.

Finally, item \ref{item4} can be shown by the flow property of $\psi$:
$$\mathcal{S}^R(t,r)\circ\mathcal{S}^R(r,s)X_0=X_0\big(\psi^R_{s,r}\big(\psi^R_{r,t}(x,\xi)\big)\big)=X_0\big(\psi^R_{s,t}(x,\xi)\big)=\mathcal{S}^R(t,s)X_0.$$
\end{proof}
\end{prop}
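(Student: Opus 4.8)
The two assertions are of quite different depth, and I would treat them separately. The semigroup law in item \ref{item4} is essentially formal: since by definition $\mathcal{S}^R(t,s)X_0=X_0\big(\psi^R_{s,t}\big)$ and the backward flow satisfies $\psi^R_{s,s}=\mathrm{Id}$ together with the cocycle identity $\psi^R_{s,t}=\psi^R_{s,r}\circ\psi^R_{r,t}$ enjoyed by the inverse flow, one computes
\[
\mathcal{S}^R(t,r)\circ\mathcal{S}^R(r,s)X_0=X_0\big(\psi^R_{s,r}(\psi^R_{r,t}(x,\xi))\big)=X_0\big(\psi^R_{s,t}(x,\xi)\big)=\mathcal{S}^R(t,s)X_0,
\]
and $\mathcal{S}^R(s,s)=\mathrm{Id}$ follows likewise. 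Linearity of each $\mathcal{S}^R(t,s)$ is immediate from the representation formula, as composition with a fixed measurable map is linear. Thus the whole weight of the proposition lies in the contraction estimate \eqref{first}.

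For \eqref{first} my plan is to prove it first — indeed with equality — on a dense, well-behaved subclass, and then extend by approximation. Fix $0\le s\le t\le T$ and assume to begin with that $X_0$ satisfies \eqref{addit}. Regularising $X_0$ into a bounded, pathwise smooth and $\xi$-compactly supported $X_0^\delta$, Proposition \ref{aux} makes $X^\delta=\mathcal{S}^R(t,s)X_0^\delta$ a genuine strong solution, and passing to the It\^o form \eqref{rov20} of the equation (obtained exactly as in Lemma \ref{prevod}) I would integrate over $\omega,x,\xi$. The transport term $\int_{\mt^N}a^R\cdot\nabla X^\delta\,\dif x$ vanishes by periodicity, the corrector $\tfrac12\int_\mr\partial_\xi(G^{R,2}\partial_\xi X^\delta)\,\dif\xi$ vanishes because $G^{R,2}$ is $\xi$-compactly supported, and the stochastic integral vanishes in expectation; what remains is the conservation identity $\stred\int_{\mt^N}\int_\mr X^\delta(t)\,\dif\xi\,\dif x=\stred\int_{\mt^N}\int_\mr X_0^\delta\,\dif\xi\,\dif x$. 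Since composition with the diffeomorphism $\psi^R_{s,t}$ preserves the sign of $X_0^\delta$ and commutes with taking positive and negative parts, applying this identity separately to $(X_0^\delta)^+$ and $(X_0^\delta)^-$ gives $\|\mathcal{S}^R(t,s)X_0^\delta\|_{L^1_{\omega,x,\xi}}=\|X_0^\delta\|_{L^1_{\omega,x,\xi}}$.

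The hard part will be justifying that the stochastic integral truly has zero expectation. This requires two things: that the integral is a genuine martingale rather than merely a local one, and that the spatial integrations in $x,\xi$ may be interchanged with the stochastic integral via the stochastic Fubini theorem. Both reduce to the finiteness of $\int_{\mt^N}\int_\mr\big(\stred\int_s^T|\partial_\xi X^\delta g_k^R|^2\,\dif r\big)^{1/2}\dif\xi\,\dif x$. Here $g_k^R$ is bounded and the moments of $\partial_\xi\psi^R_{s,r}$ are finite and bounded in $(s,r,x,\xi)$, since it solves a backward bilinear SDE with bounded coefficients, so the only genuine issue is controlling the $\xi$-support of the pushed-forward gradient $\nabla_{x,\xi}X_0^\delta\big(\psi^R_{s,r}\big)$. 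I would extract this from the growth control for the $\xi$-component $\psi^{R,0}$ of the backward flow: for each $L>0$ there is $l>0$ so that $(1+|\xi|)^\eta\le L(1+|\psi^{R,0}_{s,r}(x,\xi)|)$ once $|\xi|>l$, uniformly in $s,r,x$ almost surely. As $\nabla_{x,\xi}X_0^\delta$ is supported in $|\xi|\le 1/\delta$, this forces $\nabla_{x,\xi}X_0^\delta\big(\psi^R_{s,r}\big)$ to vanish for large $\xi$, uniformly in $(s,r,x)$, yielding the integrability. This is exactly where boundedness \eqref{addit} matters: it is what lets the regularisation be compactly supported and the stochastic integral be a true rather than a local martingale.

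It then remains to remove the auxiliary hypotheses. Having \eqref{first} for $X_0^\delta$, I would let $\delta\to0$: applying the bound to $X_0^{\delta_1}-X_0^{\delta_2}$ shows $\{\mathcal{S}^R(t,s)X_0^\delta\}$ is Cauchy in $L^1(\Omega\times\mt^N\times\mr)$, its limit is $\mathcal{S}^R(t,s)X_0$ because diffeomorphisms preserve null sets, and Fatou's lemma upgrades the bound to every $X_0$ satisfying \eqref{addit}. Finally, to discard \eqref{addit} I would truncate a general $X_0\in L^1(\Omega\times\mt^N\times\mr)$ by $X_0^n=X_0\,\ind_{|X_0|\le n}$, apply the bound to each $X_0^n$, and pass to the limit in the same manner, obtaining \eqref{first} in full generality by lower semicontinuity of the norm.
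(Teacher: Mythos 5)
Your proposal is correct and follows essentially the same route as the paper's own proof: the semigroup law via the flow property of $\psi^R$, the conservation identity for regularized bounded data (with the stochastic Fubini theorem and the growth control on $\psi^{R,0}$ justifying that the stochastic integral is a true martingale with zero mean), sign preservation to get equality, and then the two-stage approximation argument ($\delta\to0$ with a Cauchy argument and Fatou, followed by truncation $X_0^n=X_0\,\ind_{|X_0|\le n}$ to remove the $L^\infty$ hypothesis). No gaps; nothing further is needed.
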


\begin{cor}\label{weaksol}
Let $R>0$. For any $\mf_{s}\otimes\mathcal{B}(\mt^N)\otimes\mathcal{B}(\mr)$-measurable initial datum $X_0\in L^\infty(\Omega\times\mt^N\times\mr)$ there exists a unique $X\in L^\infty_{\mathcal{P}_s}\big(\Omega\times[s,T]\times\mt^N\times\mr\big)$
that is a weak solution to \eqref{rov20}, i.e. the following holds true for any $\phi\in C_c^\infty(\mt^N\times\mr)$, a.e. $t\in[s,T]$, $\prst$-a.s.,
\begin{equation}\label{cd}
\begin{split}
&\big\langle X(t),\phi\big\rangle=\big\langle X_0,\phi\big\rangle+\int_s^t\big\langle X(r),a^R\cdot\nabla \phi\big\rangle\,\dif r\\
&\quad+\sum_{k=1}^d\int_s^t\big\langle X(r),\partial_\xi(g^R_k\phi)\big\rangle\,\dif \beta_k(r)+\frac{1}{2}\int_s^t\big\langle X(r),\partial_\xi(G^{R,2}\partial_\xi\phi)\big\rangle\,\dif r.
\end{split}
\end{equation}
Furthermore, it is represented by $X=\mathcal{S}^R(t,s)X_0$.

\begin{proof}
Let us start with the proof of uniqueness.
Due to linearity, it is enough to prove that any $L^\infty$-weak solution to \eqref{rov20} starting from the origin $X_0=0$ vanishes identically. Let $X$ be such a solution. First, let $(h_\tau)$ be a symmetric approximation to the identity on $\mt^N\times\mr$ and test \eqref{rov20} by $\phi(x,\xi)=h_\tau(y-x,\zeta-\xi)$. (Here, we employ the parameter $\tau$ in order to distinguish from the regularization defined in Proposition \ref{oper}, which will also be used in this proof.) Then $X^\tau(t):=X(t)*h_\tau$, for a.e. $t\in[s,T]$, satisfies
\begin{equation*}
\begin{split}
X^\tau(t,y,\zeta)&=-\int_s^t\big[a^R\cdot\nabla X(r)\big]^\tau(y,\zeta)\,\dif r-\sum_{k=1}^d\int_s^t\big[\partial_\xi X(r) g^R_k\big]^\tau(y,\zeta)\,\dif\beta_k(r)\\
&\qquad+\frac{1}{2}\int_s^t\big[\partial_\xi\big(G^{R,2}\partial_\xi X(r)\big)\big]^\tau(y,\zeta)\,\dif r
\end{split}
\end{equation*}
hence is smooth in $(y,\zeta)$ and can be extended to become continuous on $[s,T]$.
Now, we will argue as in \cite[Theorem 20]{flandoli11} and make use of the stochastic flow $\varphi^R$. From the It\^o-Wentzell formula for the It\^o integral \cite[Theorem 3.3.1]{kun1} we deduce 
\begin{equation*}
\begin{split}
X^\tau\big(t,\varphi^R_{s,t}(\tilde{y},\tilde{\zeta})\big)&=-\int_s^t\big[a^R\cdot\nabla X(r)\big]^\tau\big(\varphi^R_{s,r}(\tilde{y},\tilde{\zeta})\big)\,\dif r\\
&\qquad-\sum_{k=1}^d\int_s^t\big[\partial_\xi X(r) g^R_k\big]^\tau\big(\varphi^R_{s,r}(\tilde{y},\tilde{\zeta})\big)\,\dif\beta_k(r)\\
&\qquad+\frac{1}{2}\int_s^t\big[\partial_\xi\big(G^{R,2}\partial_\xi X(r)\big)\big]^\tau\big(\varphi^R_{s,r}(\tilde{y},\tilde{\zeta})\big)\,\dif r\\
&\qquad+\int_s^t\nabla X^\tau\big(r,\varphi^R_{s,r}(\tilde{y},\tilde{\zeta})\big)\cdot a^R\big(\varphi^{R,0}_{s,r}(\tilde{y},\tilde{\zeta})\big)\,\dif r\\
&\qquad+\sum_{k=1}^d\int_s^t\partial_\xi X^\tau\big(r,\varphi^R_{s,r}(\tilde{y},\tilde{\zeta})\big)g^R_k\big(\varphi^R_{s,r}(\tilde{y},\tilde{\zeta})\big)\,\dif \beta_k(r)\\
&\qquad+\frac{1}{2}\int_s^t\partial_\xi^2 X^\tau\big(r,\varphi^R_{s,r}(\tilde{y},\tilde{\zeta})\big) G^{R,2}\big(\varphi^R_{s,r}(\tilde{y},\tilde{\zeta})\big)\,\dif r\\
&\qquad-\sum_{k=1}^d\int_s^t\partial_\xi\big[\partial_\xi X(r) g^R_k\big]^\tau\big(\varphi^R_{s,r}(\tilde{y},\tilde{\zeta})\big) g^R_k\big(\varphi^R_{s,r}(\tilde{y},\tilde{\zeta})\big)\,\dif r\\
&=\mathrm{J}_1+\mathrm{J}_2+\mathrm{J}_3+\mathrm{J}_4+\mathrm{J}_5+\mathrm{J}_6+\mathrm{J}_7.
\end{split}
\end{equation*}
As the next step, we intend to show that $\mathrm{J}_1+\mathrm{J}_4\rightarrow 0,$ $\mathrm{J}_2+\mathrm{J}_5\rightarrow 0,$ and $\mathrm{J}_3+\mathrm{J}_6+\mathrm{J}_7\rightarrow 0$ in $\mathcal{D}'(\mt^N\times\mr)$, $\prst$-a.s., as $\tau\rightarrow 0$.
Remark, that unlike \cite{flandoli11}, working with the Stratonovich form of \eqref{rov20} would not bring any simplifications here. To be more precise, the Stratonovich version of the It\^o-Wentzell formula (see \cite[Theorem 3.3.2]{kun1}) is close to the classical differential rule formula for composite functions hence any correction terms (as $\mathrm{J}_6,\mathrm{J}_7$ in the It\^o version) are not necessary; however, due to the dependence on $x,\xi$ of the coefficients $g^R_k$, the corresponding Stratonovich integrals would not cancel and therefore in order to guarantee their convergence to zero, one would need to control the correction terms $\mathrm{J}_6,\mathrm{J}_7$ anyway.

Let us proceed with the proof of the above sketched convergence. Towards this end, we employ repeatedly the arguments of the commutation lemma of DiPerna and Lions \cite[Lemma II.1]{diperna}. In particular, in the case of $\mathrm{J}_1+\mathrm{J}_4$ we obtain for a.e. $r\in[s,t]$, $\prst$-a.s., that 
\begin{equation}\label{dur}
\begin{split}
a^R\cdot\nabla X^\tau(r)-\big[a^R\cdot\nabla X(r)\big]^\tau&\longrightarrow 0\quad\text{in}\quad\mathcal{D}'(\mt^N\times\mr).\\
\end{split}
\end{equation}
Indeed, since
\begin{equation*}
\begin{split}
&a^R(\xi)\cdot\nabla X^\tau(r,x,\xi)-\big[a^R\cdot\nabla X(r)\big]^\tau(x,\xi)\\
&\quad=\int_{\mt^N}\int_\mr X(r,y,\zeta)\big[a^R(\xi)-a^R(\zeta)\big]\cdot\nabla h_\tau(x-y,\xi-\zeta)\dif \zeta\dif y\\
\end{split}
\end{equation*}
and $\tau|\nabla h_\tau|(\cdot)\leq C h_{2\tau}(\cdot)$, we obtain the following bound by standard estimates on convolutions : for any $\phi\in C^\infty_c(\mt^N\times\mr)$
\begin{equation*}
\begin{split}
\Big|\Big\langle a^R\cdot\nabla X^\tau(r)&-\big[a^R\cdot\nabla X(r)\big]^\tau,\phi\Big\rangle\Big|\\
&\leq C \big\|a^R\big\|_{W^{1,\infty}(\mr)}\|X(r)\|_{L^p(K_\phi)}\|\phi\|_{L^q(\mt^N\times\mr)},
\end{split}
\end{equation*}
where $K_\phi\subset\mt^N\times\mr$ is a suitable compact set and $p,q\in[1,\infty]$ are arbitrary conjugate exponents. As a consequence, it is sufficient to consider $X(r)$ continuous in $(x,\xi)$ as the general case can be concluded by a density argument. We have
\begin{equation*}
\begin{split}
&\int_{\mt^N}\int_\mr X(r,y,\zeta)\big[a^R(\xi)-a^R(\zeta)\big]\cdot\nabla h_\tau(x-y,\xi-\zeta)\dif \zeta\dif y\\
&=\int_{\mt^N}\int_\mr\int_0^1 X(r,y,\zeta)\totdif a^R\big(\zeta+\sigma(\xi-\zeta)\big)(\xi-\zeta)\cdot\nabla h_\tau(x-y,\xi-\zeta)\dif\sigma\dif \zeta\dif y\\
&=\int_{\mt^N}\int_\mr\int_0^1 X\big(r,x-\tau\tilde{y},\xi-\tau\tilde{\zeta}\big)\totdif a^R\big(\xi-(1-\sigma)\tau\tilde{\zeta}\big)\tilde{\zeta}\cdot\nabla h(\tilde{y},\tilde{\zeta})\dif\sigma\dif \tilde{\zeta}\dif \tilde{y}\\
&\longrightarrow X(r,x,\xi)\totdif a^R(\xi)\cdot\int_{\mt^N}\int_\mr\tilde{\zeta}\,\nabla h(\tilde{y},\tilde{\zeta})\dif\tilde{\zeta}\dif\tilde{y}=0
\end{split}
\end{equation*}
hence \eqref{dur} follows by the dominated convergence theorem.
Moreover, we deduce also that for a.e. $r\in[s,t]$, $\prst$-a.s.,
\begin{equation}\label{der1}
\begin{split}
a^R\big(\varphi^{R,0}_{s,r}\big)\cdot\nabla X^\tau\big(r,\varphi^R_{s,r}\big)-\big[a^R\cdot\nabla X(r)\big]^\tau\big(\varphi^R_{s,r}\big)&\longrightarrow 0\quad\text{in}\quad\mathcal{D}'(\mt^N\times\mr).\\
\end{split}
\end{equation}
It can be seen by using the change of variables formula: let $\mathrm{J}\psi^R_{s,r}$ denote the Jacobian of the inverse flow $\psi^R_{s,r}$, then
\begin{equation*}
\begin{split}
&\Big|\Big\langle a^R\big(\varphi_{s,r}^{R,0}\big)\cdot\nabla X^\tau\big(r,\varphi_{s,r}^{R}\big)-\big[a^R\cdot\nabla X(r)\big]^\tau\big(\varphi_{s,r}^{R}\big),\phi\Big\rangle\Big|\\
&=\Big|\Big\langle a^R\cdot\nabla X^\tau(r)-\big[a^R\cdot\nabla X(r)\big]^\tau,\phi\big(\psi_{s,r}^R\big)\big|\mathrm{J}\psi_{s,r}^R\big|\Big\rangle\Big|\\
&\leq C \big\|a^R\big\|_{W^{1,\infty}(\mr)}\|X(r)\|_{L^p(K)}\big\|\phi\big(\psi_{s,r}^R\big)\mathrm{J}\psi_{s,r}^R\big\|_{L^q(K)}\\
&\leq C \big\|a^R\big\|_{W^{1,\infty}(\mr)}\esssup_{s\leq r\leq T}\|X(r)\|_{L^p(K)}\|\phi\|_{L^\infty(K)}\sup_{s\leq r\leq T}\big\|\mathrm{J}\psi_{s,r}^R\big\|_{L^q(K)}<\infty,
\end{split}
\end{equation*}
which holds for a suitably chosen compact set $K\subset\mt^N\times\mr$ as $\phi(\psi_{s,r}^R)$ is compactly supported in $\mt^N\times\mr$ and any conjugate exponents $p,q\in[1,\infty]$. The estimate of $\sup_{s\leq r\leq T}\|\mathrm{J}\psi^R_{s,r}\|_{L^q(K)}$ is an immediate consequence of the fact that for almost every $\omega\in\Omega$ the mapping $(r,x,\xi)\mapsto \totdif \psi^R_{s,r}(\omega,x,\xi)$ is continuous due to the properties of stochastic flows (see Subsection \ref{flows}, \ref{itm:vik}) and therefore $(r,x,\xi)\mapsto \mathrm{J}\psi^R_{s,r}(\omega,x,\xi)$ is bounded on the given compact set $[s,T]\times K$. Having this bound in hand, we infer \eqref{der1} by using density again.
Accordingly, the almost sure convergence $\mathrm{J}_1+\mathrm{J}_4\rightarrow0$ in $\mathcal{D}'(\mt^N\times\mr)$ follows by the dominated convergence theorem.

In order to pass to the limit in the case of $\mathrm{J}_2+\mathrm{J}_5$, we employ the same approach as above so we will only write the main points of the proof. We obtain
\begin{equation*}
\begin{split}
&\Big|\Big\langle g^R_k\big(\varphi_{s,r}^R\big)\,\partial_\xi X^\tau\big(r,\varphi_{s,r}^R\big)-\big[g^R_k\,\partial_\xi X(r)\big]^\tau\big(\varphi_{s,r}^R\big),\phi\Big\rangle\Big|\\
&\hspace{2mm}\leq C \big\|g_k^R\big\|_{W^{1,\infty}(\mr)}\esssup_{s\leq r\leq T}\|X(r)\|_{L^p(K)}\|\phi\|_{L^\infty(K)}\sup_{s\leq r\leq T}\big\|\mathrm{J}\psi_{s,r}^R\big\|_{L^q(K)}
\end{split}
\end{equation*}
hence for a.e. $r\in[s,T]$, $\prst$-a.s.,
\begin{equation*}
\begin{split}
g^R_k\big(\varphi^{R}_{s,r}\big)\,\partial_\xi X^\tau\big(r,\varphi^R_{s,r}\big)-\big[g^R_k\,\partial_\xi X(r)\big]^\tau\big(\varphi^R_{s,r}\big)&\longrightarrow 0\quad\text{in}\quad\mathcal{D}'(\mt^N\times\mr)\\
\end{split}
\end{equation*}
and accordingly we conclude by the dominated convergence theorem for stochastic integrals \cite[Theorem 32]{protter} that $\prst$-a.s. (up to subsequences) $\mathrm{J}_2+\mathrm{J}_5\rightarrow0$ in $\mathcal{D}'(\mt^N\times\mr)$.

Now, it remains to verify the convergence of $\mathrm{J}_3+\mathrm{J}_6+\mathrm{J}_7$. As the first step, we will show that for a.e. $r\in[s,T]$, $\prst$-a.s., in $\mathcal{D}'(\mt^N\times\mr)$
\begin{equation}\label{h1}
\begin{split}
\frac{1}{2}\big[\partial_\xi \big( g_k^{R,2}\partial_\xi X(r)\big)\big]^\tau+\frac{1}{2}\partial^2_{\xi\xi} X^\tau(r) g_k^{R,2}-\partial_\xi\big[\partial_\xi X(r) g^{R}_k\big]^\tau g^{R}_k\longrightarrow0.
\end{split}
\end{equation}
Towards this end, we observe
\begin{equation*}
\begin{split}
\frac{1}{2}\big[\partial_\xi \big( g_k^{R,2}\partial_\xi X(r)\big)\big]^\tau(x,\xi)&=\frac{1}{2}\big\langle\partial_\zeta X(r)g_k^{R,2},\partial_\xi h_\tau(x-\cdot,\xi-\cdot)\big\rangle,\\
\frac{1}{2}\partial^2_{\xi\xi} X^\tau(r,x,\xi) g_k^{R,2}(x,\xi)&=\frac{1}{2}\big\langle\partial_\zeta X(r),\partial_\xi h_\tau(x-\cdot,\xi-\cdot)\big\rangle g_k^{R,2}(x,\xi),\\
-\partial_\xi\big[\partial_\xi X(r) g^{R}_k\big]^\tau(x,\xi)\, g^{R}_k(x,\xi)&=-\big\langle\partial_\zeta X(r) g_k^R,\partial_\xi h_\tau(x-\cdot,\xi-\cdot)\big\rangle g_k^R(x,\xi),
\end{split}
\end{equation*}
and hence the left hand side of \eqref{h1} evaluated at $(x,\xi)$ is equal to
\begin{equation*}
\begin{split}
&\frac{1}{2}\int_{\mt^N}\int_\mr\partial_\zeta X(r,y,\zeta)\big(g_k^R(y,\zeta)-g_k^R(x,\xi)\big)^2\partial_\xi h_\tau(x-y,\xi-\zeta)\dif\zeta\dif y\\
&=-\int_{\mt^N}\int_\mr X(r,y,\zeta)\big(g_k^R(y,\zeta)-g_k^R(x,\xi)\big)\partial_\zeta g_k^R(y,\zeta)\partial_\xi h_\tau(x-y,\xi-\zeta)\dif\zeta\dif y\\
&\qquad+\frac{1}{2}\int_{\mt^N}\int_\mr X(r,y,\zeta)\big(g_k^R(y,\zeta)-g_k^R(x,\xi)\big)^2\partial^2_{\xi\xi}h_\tau(x-y,\xi-\zeta)\dif\zeta\dif y\\
&=\mathrm{I}_1(x,\xi)+\mathrm{I}_2(x,\xi).
\end{split}
\end{equation*}
Next, we proceed as in the case of $\mathrm{J}_1+\mathrm{J}_4$. We obtain
\begin{equation*}
\begin{split}
\big|\big\langle \mathrm{I}_1+\mathrm{I}_2,\phi\big\rangle\big|\leq C\big\|g_k^R\big\|_{W^{1,\infty}(\mt^N\times\mr)}^2\|X(r)\|_{L^p(K_\phi)}\|\phi\|_{L^q(\mt^N\times\mr)}
\end{split}
\end{equation*}
which holds true for a suitable compact set $K_\phi\subset\mt^N\times\mr$ and arbitrary conjugate exponents $p,q\in[1,\infty]$ and in the case of $X(r)$ continuous in $(x,\xi)$
\begin{equation*}
\begin{split}
\mathrm{I}_1(x,\xi)&\longrightarrow - X(r,x,\xi)\big(\partial_\xi g_k^R(x,\xi)\big)^2,\\
\mathrm{I}_2(x,\xi)&\longrightarrow X(r,x,\xi)\big(\partial_\xi g_k^R(x,\xi)\big)^2,
\end{split}
\end{equation*}
which yields \eqref{h1} by the dominated convergence theorem and density.
As the next step, we conclude that
\begin{equation*}
\begin{split}
&\Big|\Big\langle \mathrm{I}_1\big(\varphi^R_{s,r}\big)+\mathrm{I}_2\big(\varphi^R_{s,r}\big),\phi\Big\rangle\Big|\\
&\leq C\big\|g_k^R\big\|_{W^{1,\infty}(\mt^N\times\mr)}^2\esssup_{s\leq r\leq T}\|X(r)\|_{L^p(K)}\|\phi\|_{L^\infty(K)}\sup_{s\leq r\leq T}\big\|\mathrm{J}\psi^R_{s,r}\big\|_{L^q(K)}
\end{split}
\end{equation*}
and consequently for a.e. $r\in[s,T]$, $\prst$-a.s.,
\begin{equation*}
\begin{split}
\frac{1}{2}\big[\partial_\xi \big( g_k^{R,2}\partial_\xi X(r)\big)\big]^\tau\big(\varphi^R_{s,r}\big)&+\frac{1}{2}\partial^2_{\xi\xi} X^\tau\big(r,\varphi^R_{s,r}\big) g_k^{R,2}\big(\varphi^R_{s,r}\big)\\
&\hspace{-2cm}-\partial_\xi\big[\partial_\xi X(r) g^{R}_k\big]^\tau\big(\varphi^R_{s,r}\big) g^{R}_k\big(\varphi^R_{s,r}\big)\longrightarrow0\quad\text{in}\quad\mathcal{D}'(\mt^N\times\mr).
\end{split}
\end{equation*}
Therefore, the desired convergence of $\mathrm{J}_3+\mathrm{J}_6+\mathrm{J}_7$ is verified.

Finally, since it holds true for a.e. $t\in[s,T]$ that
$$X^\tau(t)\overset{w^*}{\longrightarrow} X(t)\qquad\text{in}\qquad L^\infty(\mt^N\times\mr),\;\prst\text{-a.s.},$$
we obtain for any $\phi\in C^\infty_c(\mt^N\times\mr)$
\begin{equation*}
\begin{split}
\Big\langle X\big(t,\varphi_{s,t}^R\big),\phi\Big\rangle&=\Big\langle X(t), \phi\big(\psi^R_{s,t}\big)\big|\mathrm{J}\psi^R_{s,t}\big|\Big\rangle=\lim_{\tau\rightarrow 0}\Big\langle X^\tau(t), \phi\big(\psi^R_{s,t}\big)\big|\mathrm{J}\psi^R_{s,t}\big|\Big\rangle\\
&=\lim_{\tau\rightarrow 0}\Big\langle X^\tau\big(t,\varphi_{s,t}^R\big),\phi\Big\rangle=0
\end{split}
\end{equation*}
hence $X=0$ since $\varphi_{s,t}^R$ is a bijection and the proof of uniqueness is complete.

The proof of the explicit formula for $X$ follows by employing the regularization $X_0^\delta$ as in the proof of Proposition \ref{oper}.
The process $X^\delta=\mathcal{S}^R(t,s) X_0^\delta$ is the unique strong solution to \eqref{rov10} or equivalently \eqref{rov20} by using a similar approach as in Lemma \ref{prevod}. Consequently, it satisfies for all $\phi\in C^\infty_c(\mt^N\times\mr)$
\begin{equation*}
\begin{split}
\big\langle X^\delta(t),&\,\phi\big\rangle=\big\langle X^\delta_0,\phi\big\rangle+\int_s^t\big\langle X^\delta(r),a^R(\xi)\cdot\nabla \phi\big\rangle\,\dif r\\
&+\sum_{k=1}^d\int_s^t\big\langle X^\delta(r),\partial_\xi(g^R_k\phi)\big\rangle\,\dif \beta_k(r)+\frac{1}{2}\int_s^t\big\langle X^\delta(r),\partial_\xi(G^{R,2}\partial_\xi\phi)\big\rangle\,\dif r.
\end{split}
\end{equation*}
Now, it only remains to take the limit as $\delta\rightarrow 0$. As $X_0^\delta\rightarrow X_0$ for a.e. $\omega,x,\xi$ we have $X^\delta=\mathcal{S}^R(t,s)X_0^\delta\rightarrow \mathcal{S}^R(t,s)X_0=X$ for a.e. $\omega,x,\xi$ and every $t\in[s,T]$. Therefore, the convergence in all the terms apart from the stochastic one follows directly by the dominated convergence theorem.
For the case of stochastic integral we can apply the dominated convergence theorem for stochastic integrals. Since it holds
$$\big\langle X^\delta(r),\partial_\xi(g^R_k\phi)\big\rangle\longrightarrow\big\langle X(r),\partial_\xi(g^R_k\phi)\big\rangle,\qquad \text{a.e. }(\omega,\,r)\in\Omega\times[s,T]$$
and, setting $K=\supp \phi\subset\mt^N\times\mr$,
\begin{equation*}
\begin{split}
\big|\big\langle X^\delta(r),\partial_\xi(g^R_k\phi)\big\rangle\big|&\leq C\,\int_K\big|X_0^\delta\big(\psi^R_{s,r}(x,\xi)\big)\big|\,\dif\xi\,\dif x\leq C,
\end{split}
\end{equation*}
where the constant $C$ does not depend on $\delta$ due to the fact that
$$\|X^\delta_0\|_{L^\infty_{\omega,x,\xi}}\leq\|X_0\|_{L^\infty_{\omega,x,\xi}}.$$
Thus, we deduce (up to subsequences) the almost sure convergence of the stochastic integrals.
Furthermore, $\mathcal{S}^R(t,s)X_0$ is exactly the representative (in $t$) of the unique weak solution of \eqref{rov20} that satisfies \eqref{cd} for all $t\in[s,T]$, in particular, $t\mapsto\langle\mathcal{S}^R(t,s)X_0,\phi\rangle$ is a continuous $(\mf_{t})_{t\geq s}$-semimartingale for any $\phi\in C_c^\infty(\mt^N\times\mr)$.
\end{proof}
\end{cor}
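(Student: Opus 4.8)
The plan is to establish existence together with the representation formula by a regularization argument built on Propositions~\ref{aux} and~\ref{oper}, and then to prove uniqueness separately by transporting a mollified solution along the characteristic flow. For existence I take $X=\mathcal{S}^R(t,s)X_0=X_0\big(\psi^R_{s,t}\big)$ as the candidate; by Proposition~\ref{oper} this lies in $L^\infty_{\mathcal{P}_s}$ with $\|X(t)\|_{L^\infty_{x,\xi}}\leq\|X_0\|_{L^\infty_{x,\xi}}$. I would approximate $X_0$ by the smooth, compactly supported $X_0^\delta=(X_0*h_\delta)k_\delta$ exactly as in Proposition~\ref{oper}. For each $\delta$, Proposition~\ref{aux} gives that $X^\delta=X_0^\delta(\psi^R_{s,t})$ is the strong solution of \eqref{rov10}, and the It\^o-Stratonovich conversion of Lemma~\ref{prevod} shows it satisfies the weak formulation \eqref{cd} with $X_0^\delta$ in place of $X_0$. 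It then remains to pass to the limit $\delta\to0$: since $X_0^\delta\to X_0$ for a.e.\ $(\omega,x,\xi)$ and diffeomorphisms preserve null sets, $X^\delta\to X$ a.e.; the uniform bound $\|X_0^\delta\|_{L^\infty}\leq\|X_0\|_{L^\infty}$, the compact support of $\phi$, and the boundedness of $a^R,g^R_k,G^{R,2}$ let me pass to the limit in the two Lebesgue integrals by dominated convergence and in the martingale term by the dominated convergence theorem for stochastic integrals. This yields \eqref{cd} for $X$ and simultaneously the representation $X=\mathcal{S}^R(t,s)X_0$.

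The heart of the matter, and the step I expect to be the main obstacle, is uniqueness. By linearity it suffices to show that any weak solution $X\in L^\infty_{\mathcal{P}_s}$ with $X_0=0$ vanishes. The idea is to regularize in $(x,\xi)$ and then straighten out the transport by composing with the flow $\varphi^R$. Testing \eqref{cd} against $h_\tau(y-\cdot,\zeta-\cdot)$ produces a spatially smooth process $X^\tau=X*h_\tau$ solving the convolved equation, to which the It\^o-Wentzell formula in It\^o form (\cite[Theorem 3.3.1]{kun1}) applies; composing with $\varphi^R_{s,t}$ expands $X^\tau\big(t,\varphi^R_{s,t}\big)$ into seven terms $\mathrm{J}_1+\cdots+\mathrm{J}_7$, namely the three convolved terms coming from the equation and the four correction terms produced by the formula. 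The goal is to show that these cancel in the limit $\tau\to0$, grouped as $\mathrm{J}_1+\mathrm{J}_4\to0$, $\mathrm{J}_2+\mathrm{J}_5\to0$, and $\mathrm{J}_3+\mathrm{J}_6+\mathrm{J}_7\to0$ in $\mathcal{D}'(\mt^N\times\mr)$, $\prst$-a.s. I would note here that keeping the It\^o form is essential: because $g^R_k$ depends on $(x,\xi)$ the Stratonovich correctors do not cancel, so one must carry the explicit correction terms $\mathrm{J}_6,\mathrm{J}_7$ in any case.

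Each grouped cancellation rests on a commutator estimate in the spirit of DiPerna-Lions \cite[Lemma II.1]{diperna}. For $\mathrm{J}_1+\mathrm{J}_4$ the relevant object is $a^R\cdot\nabla X^\tau-[a^R\cdot\nabla X]^\tau$, which I rewrite as a convolution against $[a^R(\xi)-a^R(\zeta)]\cdot\nabla h_\tau$; the bound $\tau|\nabla h_\tau|\leq C h_{2\tau}$ and the boundedness of $\totdif a^R$ (a consequence of the truncation) force convergence to zero in $\mathcal{D}'$, first for continuous $X(r)$ and then in general by density. Composing with the flow, the estimate is made uniform using the boundedness of the Jacobian $\mathrm{J}\psi^R_{s,r}$ on compact sets, so dominated convergence gives $\mathrm{J}_1+\mathrm{J}_4\to0$. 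The pair $\mathrm{J}_2+\mathrm{J}_5$ is handled identically with the first-order coefficient $g^R_k$, concluding via the dominated convergence theorem for stochastic integrals, while $\mathrm{J}_3+\mathrm{J}_6+\mathrm{J}_7$ requires a second-order commutator built from $\big(g^R_k(y,\zeta)-g^R_k(x,\xi)\big)^2$ that, after two integrations by parts, vanishes by the same scheme. Finally, since $X^\tau(t)\overset{w^*}{\to}X(t)$ in $L^\infty$ for a.e.\ $t$, these cancellations give $X\big(t,\varphi^R_{s,t}\big)=0$, and as $\varphi^R_{s,t}$ is a bijection we conclude $X=0$, which completes the proof.
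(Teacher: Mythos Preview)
Your proposal is correct and follows essentially the same approach as the paper: the existence part via the regularization $X_0^\delta$ from Proposition~\ref{oper} and passage to the limit by dominated convergence (including for the stochastic integral), and the uniqueness part via mollification, the It\^o--Wentzell formula along $\varphi^R$, and DiPerna--Lions commutator estimates with the same grouping $\mathrm{J}_1+\mathrm{J}_4$, $\mathrm{J}_2+\mathrm{J}_5$, $\mathrm{J}_3+\mathrm{J}_6+\mathrm{J}_7$. The only cosmetic difference is that the paper treats uniqueness first and, for $\mathrm{J}_3+\mathrm{J}_6+\mathrm{J}_7$, performs a single integration by parts in $\zeta$ to produce two terms $\mathrm{I}_1,\mathrm{I}_2$ whose limits cancel, rather than ``two integrations by parts''; the underlying idea is the same.
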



As the next step, we derive the existence of a unique weak solution to \eqref{rov1} which can be equivalently rewritten as
\begin{equation}\label{rov2}
\begin{split}
\dif X+a(\xi)\cdot\nabla X\,\dif t&=-\partial_\xi X\varPhi\,\dif W+\frac{1}{2}\partial_\xi(G^2\partial_\xi X)\,\dif t,\\
X(s)&=X_0
\end{split}
\end{equation}
due to Lemma \ref{prevod}.
With regard to the definition of the truncated coefficients, let us define
$$\tau^R(s,x,\xi)=\inf\big\{t\geq s\semicol\,|\varphi^{R,0}_{s,t}(x,\xi)|> R\big\}$$
(with the convention $\inf \emptyset=T$). Clearly, for any $s\in[0,T],\,x\in\mt^N,$ $\xi\in\mr$, $\tau^R(s,x,\xi)$ is a stopping time with respect to the filtration $(\mf_{t})_{t\geq s}$.
Nevertheless, it can be shown that the blow-up cannot occur in a finite time and therefore
$$\sup_{R>0}\tau^R(s,x,\xi)=T,\qquad\prst\text{-a.s.},\,s\in[0,T],\,x\in\mt^N,\,\xi\in\mr.$$
Indeed, for any $R>0$,
the process $\varphi^{R,0}$ satisfies the It\^o equation
$$\dif \varphi^{R,0}_t=\sum_{k=1}^d g^R_k(\varphi^R_t)\,\dif\beta_k(t)$$
where all the coefficients $g_k^R$ satisfy the linear growth estimate \eqref{linrust} that is independent of $R$ and $x$ and therefore the claim follows by a standard estimation technique for SDEs.
Moreover, if $R'>R$ then due to uniqueness $\tau^{R'}(s,x,\xi)\geq\tau^R(s,x,\xi)$ and $\mathcal{S}^{R'}(t,s)X_0=\mathcal{S}^R(t,s)X_0$ on $[0,\tau^R(s,x,\xi)]$. As a consequence, the pointwise limit
$$\big[\mathcal{S}(t,s)X_0\big](\omega,x,\xi):=\lim_{R\rightarrow\infty}\big[\mathcal{S}^R(t,s)X_0\big](\omega,x,\xi),\qquad 0\leq s\leq t\leq T,$$
exists almost surely and we obtain the following result.

\begin{cor}\label{weaksol1}
The family $\mathcal{S}=\{\mathcal{S}(t,s),\,0\leq s\leq t\leq T\}$ consists of bounded linear operators on $L^1(\Omega\times\mt^N\times\mr)$ having unit operator norm, i.e. for any $X_0\in L^1(\Omega\times\mt^N\times\mr)$, $0\leq s \leq t \leq T,$
$$\big\|\mathcal{S}(t,s)X_0\big\|_{L^1_{\omega,x,\xi}}\leq \|X_0\|_{L^1_{\omega,x,\xi}}.$$
Furthermore, for any $\mf_{s}\otimes\mathcal{B}(\mt^N)\otimes\mathcal{B}(\mr)$-measurable initial datum $X_0\in L^\infty(\Omega\times\mt^N\times\mr)$ there exists a unique $X\in L^\infty_{\mathcal{P}_s}(\Omega\times[s,T]\times\mt^N\times\mr)$ that is a weak solution to \eqref{rov2}. Besides, it is represented by $X=\mathcal{S}(t,s)X_0$ and $t\mapsto\langle\mathcal{S}(t,s)X_0,\phi\rangle$ is a continuous $(\mf_{t})_{t\geq s}$-semimartingale for any $\phi\in C_c^\infty(\mt^N\times\mr)$. Consequently, $\mathcal{S}$ verifies the semigroup law
\begin{equation*}
\begin{split}
\mathcal{S}(t,s)&=\mathcal{S}(t,r)\circ\mathcal{S}(r,s),\qquad 0\leq s\leq r\leq t\leq T,\\
\mathcal{S}(s,s)&=\mathrm{Id},\hspace{3.3cm} 0\leq s\leq T.
\end{split}
\end{equation*}

\begin{proof}
The first part of the proof follows directly from Proposition \ref{oper} while the rest is a consequence of Corollary \ref{weaksol}.
\end{proof}
\end{cor}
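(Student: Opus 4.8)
The plan is to transport every assertion from the truncated operators $\mathcal{S}^R$ to their pointwise limit $\mathcal{S}$, using Proposition \ref{oper} and Corollary \ref{weaksol} together with the growth control $\sup_{R}\tau^R(s,x,\xi)=T$ established just above. The only facts I need about the limit are that $\mathcal{S}^R(t,s)X_0=X_0(\psi^R_{s,t})$, that $[\mathcal{S}(t,s)X_0](\omega,x,\xi)=\lim_{R\to\infty}[\mathcal{S}^R(t,s)X_0](\omega,x,\xi)$ exists $\prst$-a.s., and that for $R'>R$ this convergence is stationary, $\mathcal{S}^{R'}(t,s)X_0=\mathcal{S}^R(t,s)X_0$ on $[s,\tau^R]$. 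Combined with $\sup_R\tau^R=T$ and Fubini, this in fact yields convergence $X^R\to X$ a.e. in $(\omega,r,x,\xi)$, which I will use freely below.

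\emph{Linearity and the $L^1$ bound.} Linearity of each $\mathcal{S}^R(t,s)$ (Proposition \ref{oper}) is inherited by the pointwise limit, so $\mathcal{S}(t,s)$ is linear. For $X_0\in L^1(\Omega\times\mt^N\times\mr)$ the estimate \eqref{first} gives $\|\mathcal{S}^R(t,s)X_0\|_{L^1_{\omega,x,\xi}}\leq\|X_0\|_{L^1_{\omega,x,\xi}}$ uniformly in $R$; applying Fatou's lemma to the a.s. pointwise convergence $\mathcal{S}^R(t,s)X_0\to\mathcal{S}(t,s)X_0$ gives $\|\mathcal{S}(t,s)X_0\|_{L^1_{\omega,x,\xi}}\leq\liminf_R\|\mathcal{S}^R(t,s)X_0\|_{L^1_{\omega,x,\xi}}\leq\|X_0\|_{L^1_{\omega,x,\xi}}$. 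Hence $\mathcal{S}(t,s)$ maps $L^1$ into $L^1$ with unit operator norm.

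\emph{Existence, representation and the semimartingale property.} Fix an $\mf_s\otimes\mathcal{B}(\mt^N)\otimes\mathcal{B}(\mr)$-measurable $X_0\in L^\infty$ and set $X^R=\mathcal{S}^R(t,s)X_0$, $X=\mathcal{S}(t,s)X_0$. Since $\psi^R_{s,t}$ is a diffeomorphism, $\|X^R(t)\|_{L^\infty_{\omega,x,\xi}}\leq\|X_0\|_{L^\infty_{\omega,x,\xi}}$ uniformly in $R$, so $X\in L^\infty_{\mathcal{P}_s}$, predictability passing to pointwise limits. To verify that $X$ solves \eqref{rov2} weakly, fix $\phi\in C_c^\infty(\mt^N\times\mr)$ with $\supp\phi\subset\mt^N\times[-M,M]$ and take $R>2M$. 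Then $k_R\equiv1$ with vanishing derivative on $[-M,M]$ forces $a^R\cdot\nabla\phi=a\cdot\nabla\phi$, $\partial_\xi(g_k^R\phi)=\partial_\xi(g_k\phi)$ and $\partial_\xi(G^{R,2}\partial_\xi\phi)=\partial_\xi(G^2\partial_\xi\phi)$, so the weak formulation \eqref{cd} for $X^R$ reads termwise as the weak formulation of \eqref{rov2}. I then pass to the limit $R\to\infty$: the pointwise convergence $X^R\to X$ together with the uniform domination $|\langle X^R(r),\partial_\xi(g_k\phi)\rangle|\leq\|X_0\|_{L^\infty_{\omega,x,\xi}}\|\partial_\xi(g_k\phi)\|_{L^1_{x,\xi}}$ on the compact support of the test functions permits the ordinary dominated convergence theorem in the drift terms and the dominated convergence theorem for stochastic integrals (exactly as in Corollary \ref{weaksol}) in the martingale term. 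Since every test function has compact $\xi$-support, this shows $X=\mathcal{S}(t,s)X_0$ satisfies \eqref{rov2}; reading off the right-hand side of the resulting identity exhibits $t\mapsto\langle X(t),\phi\rangle$ as a continuous $(\mf_t)_{t\geq s}$-semimartingale.

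\emph{Uniqueness and the semigroup law.} The semigroup law is a formal consequence of uniqueness: $\mathcal{S}(t,r)\mathcal{S}(r,s)X_0$ and $\mathcal{S}(t,s)X_0$ are both weak solutions of \eqref{rov2} on $[r,t]$ issued from the admissible datum $\mathcal{S}(r,s)X_0=X(r)\in L^\infty_{\mathcal{P}_r}$, while $\mathcal{S}(s,s)=\mathrm{Id}$ is immediate from $\psi^R_{s,s}=\mathrm{Id}$. The main obstacle is therefore uniqueness for \eqref{rov2}, whose coefficients $a$ (and $\partial_\xi G^2$ under \eqref{nula}) are unbounded, so the commutator argument of Corollary \ref{weaksol} cannot be run globally. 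I would localize. Given two $L^\infty$ weak solutions, their difference $Y$ solves the homogeneous \eqref{rov2}, and for each $\phi$ supported in $\xi\in[-M,M]$ and each $R>2M$ the coincidence above shows that $Y$ satisfies the truncated weak formulation tested against such $\phi$. Running the uniqueness scheme of Corollary \ref{weaksol} along the flow $\varphi^R$, but stopped at the first exit of the $\xi$-characteristic $\varphi^{R,0}_{s,\cdot}$ from the region $\{|\xi|\leq R/2\}$ where $a^R=a$, $g_k^R=g_k$, $G^{R,2}=G^2$ and the DiPerna--Lions commutators are controlled by $\|a^R\|_{W^{1,\infty}}$, $\|g_k^R\|_{W^{1,\infty}}$, yields $Y\equiv0$ up to that exit time. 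Letting $R\to\infty$ and invoking the growth control $\sup_R\tau^R=T$ $\prst$-a.s. then forces $Y\equiv0$ on $[s,T]$. The delicate point is precisely reconciling the restriction to test functions supported in $[-M,M]$ with the transport of their support by the flow up to the stopping time; this is exactly where the uniform growth control of the stochastic flows $\psi^R$ and the stopping times $\tau^R$ are indispensable.
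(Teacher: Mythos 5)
Your proposal is correct and takes essentially the same approach as the paper: the paper's own proof is nothing more than the citation of Proposition \ref{oper} (linearity and the $L^1$ contraction, passed to the limit by Fatou) and of Corollary \ref{weaksol} combined with the stopping-time construction set up immediately before the corollary (stationarity $\mathcal{S}^{R'}(t,s)X_0=\mathcal{S}^{R}(t,s)X_0$ on $[s,\tau^{R}]$ and $\sup_{R}\tau^{R}=T$), which is exactly the localization you spell out — including choosing $R$ large relative to the $\xi$-support of the test function so that the truncated and untruncated weak formulations coincide termwise. The delicate point you flag in the uniqueness step (reconciling test functions supported in $[-M,M]$ with the transport of their support by the flow up to the exit time) is genuine but is precisely what the stationary convergence on $[s,\tau^{R}]$ together with $\sup_{R}\tau^{R}=T$ is designed to absorb, and the paper leaves this detail at the same level of implicitness as you do.
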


\begin{cor}\label{indik}
For all $n\in[0,\infty)$ it holds
\begin{equation}\label{indik1}
\sup_{0\leq s\leq T}\stred\sup_{s\leq t\leq T}\big\|\big(\mathcal{S}(t,s)\ind_{0>\xi}-\ind_{0>\xi}\big)(1+|\xi|)^n\big\|_{L^1_{x,\xi}}\leq C.
\end{equation}

\begin{proof}
Remark, that if \eqref{nula} is fulfilled, then for any $0\leq s\leq t\leq T$ and $x\in\mt^N$ the process $\varphi^{R,0}_{s,t}(x,0)\equiv 0$ is a solution to the first equation in \eqref{char} for any $R>0$. Moreover, since the solution to \eqref{char} is unique, we deduce
$$\varphi^{R,0}_{s,t}(x,\xi)\begin{cases}
						\geq 0,&\quad \text{if }\;\xi\geq 0,\\
						\leq 0,&\quad \text{if }\;\xi\leq 0.
					\end{cases}$$
As a consequence, the same is valid for the inverse stochastic flow $\psi^{R,0}$ hence $\mathcal{S}^R(t,s)\ind_{0>\xi}=\ind_{0>\xi}$ for all $R>0$ and thus the left hand side in \eqref{indik1} is zero.

In the case of \eqref{omez}, it is enough to prove the statement for any $\mathcal{S}^R$ provided the constant is independent on $R$. The stochastic characteristic system \eqref{char} rewritten in terms of It\^o's integral takes the following form
\begin{equation*}
\begin{split}
\dif \varphi^0_t&=\sum_{k=1}^d g^R_k(\varphi_t)\,\dif \beta_k(t),\\
\dif\varphi^i_t&=a_i^R(\varphi^0_t)\,\dif t,\qquad i=1,\dots,N,
\end{split}
\end{equation*}
whereas, in the case of the inverse flow, \eqref{char22} reads
\begin{equation*}
\begin{split}
\dif \psi^0_t&=-\sum_{k=1}^d g^R_k(\psi_t)\,\hat{\dif\,}\!\beta_k(t),\\
\dif\psi^i_t&=-a_i^R(\psi^0_t)\,\dif t,\qquad i=1,\dots,N.
\end{split}
\end{equation*}
Thus, we obtain
\begin{equation*}
\begin{split}
\mathcal{S}^R(t,s)\ind_{0>\xi}-\ind_{0>\xi}&=\ind_{\sum_{k=1}^d\int_s^t g^R_k(\psi^{R}_{r,t}(x,\xi))\hat{\dif\,}\!\beta_k(r)>\xi}-\ind_{0>\xi}\\
&=\ind_{|\xi|\leq\big|\sum_{k=1}^d\int_s^t g^R_k(\psi^{R}_{r,t}(x,\xi))\hat{\dif\,}\!\beta_k(r)\big|}\\
&\leq\frac{\big(1+\big|\sum_{k=1}^d\int_s^t g^R_k(\psi^{R}_{r,t}(x,\xi))\hat{\dif\,}\!\beta_k(r)\big|\big)^{n+2}}{(1+|\xi|)^{n+2}}
\end{split}
\end{equation*}
and since the fact that $\psi_{r,t}^{R}\circ\varphi_{s,t}^{R}=\varphi_{s,r}^{R}$ implies
\begin{equation*}
\begin{split}
\sum_{k=1}^d\int_s^t g^R_k(\psi_{r,t}^{R}(x,\xi))\,\hat{\dif\,}\!\beta_k(r)&=\sum_{k=1}^d\int_s^t g^R_k(\varphi_{s,r}^{R}(y,\zeta))\,\dif\beta_k(r)
\end{split}
\end{equation*}
by setting $(x,\xi)=\varphi_{s,t}^R(y,\zeta)$, we deduce that
\begin{equation*}
\begin{split}
\stred\sup_{s\leq t\leq T}&\int_{\mt^N}\int_\mr\big|\mathcal{S}(t,s)\ind_{0>\xi}-\ind_{0>\xi}\big|(1+|\xi|)^n\,\dif \xi\,\dif x\\
&\leq C+C\sup_{(y,\zeta)\in\mr^N\!\times\mr}\stred\sup_{s\leq t\leq T}\bigg|\sum_{k=1}^d\int_s^t g^R_k(\varphi^{R}_{s,r}(y,\zeta))\,\dif\beta_k(r)\bigg|^{n+2}\\
&\leq C+C\sup_{(y,\zeta)\in\mr^N\!\times\mr}\stred\bigg(\sum_{k=1}^d\int_s^T\big|g^R_k(\varphi^{R}_{s,r}(y,\zeta))\big|^2\,\dif r\bigg)^{\frac{n+2}{2}}\leq C,
\end{split}
\end{equation*}
where the constant $C$ does not depend on $R$ and $s$.
\end{proof}
\end{cor}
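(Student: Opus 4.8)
The plan is to establish the bound at the level of the truncated operators $\mathcal{S}^R$ with a constant independent of $R$, and then to pass to the limit $R\to\infty$ using the pointwise convergence $\mathcal{S}^R(t,s)\ind_{0>\xi}\to\mathcal{S}(t,s)\ind_{0>\xi}$ together with Fatou's lemma. The argument naturally splits according to the two structural hypotheses \eqref{nula} and \eqref{omez} on $G^2$.

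Under \eqref{nula} I would first observe that $\xi=0$ is invariant for the $\xi$-component of the characteristic system \eqref{char}: since $g^R_k(x,0)=0$, the constant process $\varphi^{R,0}_{s,t}(x,0)\equiv0$ solves the first equation, and by uniqueness the sign of $\varphi^{R,0}_{s,t}(x,\xi)$ coincides with the sign of $\xi$. The same monotonicity in sign then transfers to the inverse flow $\psi^{R,0}$, whence $\ind_{0>\psi^{R,0}_{s,t}(x,\xi)}=\ind_{0>\xi}$ and $\mathcal{S}^R(t,s)\ind_{0>\xi}=\ind_{0>\xi}$ for every $R$. In this case the left-hand side of \eqref{indik1} is identically zero and there is nothing more to prove.

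Under \eqref{omez} the core computation starts from the explicit representation $\mathcal{S}^R(t,s)\ind_{0>\xi}=\ind_{0>\psi^{R,0}_{s,t}(x,\xi)}$ and the It\^o form of the backward characteristic system, which gives $\psi^{R,0}_{s,t}(x,\xi)=\xi-\tilde M$ with $\tilde M=\sum_{k}\int_s^t g^R_k(\psi^R_{r,t}(x,\xi))\hat{\dif}\beta_k(r)$. The difference of the two indicators is supported on the segment between $0$ and $\tilde M$, so that pointwise $\big|\mathcal{S}^R(t,s)\ind_{0>\xi}-\ind_{0>\xi}\big|\le\ind_{|\xi|\le|\tilde M|}\le(1+|\tilde M|)^{n+2}(1+|\xi|)^{-(n+2)}$. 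Multiplying by the weight $(1+|\xi|)^n$ leaves the integrable factor $(1+|\xi|)^{-2}$ in the $\xi$-variable; after integrating over $\mt^N\times\mr$ and using $\sup_t\int\le\int\sup_t$ together with Tonelli, the whole estimate reduces to controlling $\stred\sup_{s\le t\le T}(1+|\tilde M|)^{n+2}$ uniformly in $(x,\xi)$.

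The decisive step is to turn the backward stochastic integral $\tilde M$ into a forward martingale. Using the flow identity $\psi^R_{r,t}\circ\varphi^R_{s,t}=\varphi^R_{s,r}$ and the change of variables $(x,\xi)=\varphi^R_{s,t}(y,\zeta)$, the backward It\^o integral $\tilde M$ is rewritten as the forward It\^o integral $\sum_k\int_s^t g^R_k(\varphi^R_{s,r}(y,\zeta))\dif\beta_k(r)$, to which the Burkholder--Davis--Gundy inequality applies in its $\stred\sup_t$ form. Taking the worst-case initial point, one is left to bound $\sup_{(y,\zeta)}\stred\big(\sum_k\int_s^T|g^R_k(\varphi^R_{s,r}(y,\zeta))|^2\,\dif r\big)^{(n+2)/2}$, and here \eqref{omez} is exactly what is needed: $\sum_k|g^R_k|^2\le G^{R,2}\le C$ with a constant independent of $R$, so the time integral contributes at most $CT^{(n+2)/2}$. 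I expect this passage --- the backward-to-forward conversion that makes the martingale machinery and hence the uniform moment bound available --- to be the main obstacle, since it is the only point where the complicated dependence of $\psi^R_{r,t}$ on the terminal time $t$ is resolved; the remaining steps (the indicator estimate, the integrability of $(1+|\xi|)^{-2}$, and the limit in $R$) are routine.
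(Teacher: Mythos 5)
Your proposal is correct and follows essentially the same route as the paper's proof: the sign-preservation/uniqueness argument under \eqref{nula} giving $\mathcal{S}^R(t,s)\ind_{0>\xi}=\ind_{0>\xi}$, and under \eqref{omez} the indicator bound $\ind_{|\xi|\leq|\tilde M|}\leq(1+|\tilde M|)^{n+2}(1+|\xi|)^{-(n+2)}$, the backward-to-forward conversion via the flow identity $\psi^R_{r,t}\circ\varphi^R_{s,t}=\varphi^R_{s,r}$, and the Burkholder--Davis--Gundy estimate closed by the $R$-independent bound $G^{R,2}\leq C$. The only difference is cosmetic: you spell out the final Fatou passage as $R\to\infty$, which the paper leaves implicit in the remark that it suffices to prove the bound for each $\mathcal{S}^R$ with a constant independent of $R$.
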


\begin{rem}\label{indikreason}
Let us make some comments on hypotheses \eqref{nula}, \eqref{omez} as the proof of Corollary \ref{indik} is their only use. The main difficulty in proving \eqref{indik1} comes from the unknown structure of dependence of the stochastic flows $\varphi^R$ and $\psi^R$ on $\xi$ in connection with the remaining variables $\omega,x,s,t$. Although one cannot say much in general, it is possible to find some (mostly simple) examples such that \eqref{indik1} holds true even without \eqref{nula}, \eqref{omez}. If the stochastic characteristic curve is governed by a linear system of stochastic differential equation as for instance
\begin{equation*}
\begin{split}
\dif \varphi^0_t&=\sum_{k=0}^N\big(1+\varphi^k_t\big)\,\dif \beta_k(t),\\
\dif\varphi^i_t&=\varphi^0_t\,\dif t,\qquad i=1,\dots, N,
\end{split}
\end{equation*}
i.e. neither \eqref{nula} nor \eqref{omez} is fulfilled since $g_0(x,\xi)=1+\xi$, then both forward and backward stochastic flow are given by explicit formulas where the dependence on $\xi$ is clear and, as a consequence, the statement of Corollary \ref{indik} remains valid.
%
\end{rem}

Now, we have all in hand to complete the proof of Theorem \ref{duhamel}.

\begin{proof}[Proof of Theorem \ref{duhamel}]
Recall, that the local densities are defined as follows
\begin{equation}\label{dens}
u^\varepsilon(t,x)=\int_\mr f^\varepsilon(t,x,\xi)\,\dif \xi=\int_\mr \big(F^\varepsilon(t,x,\xi)-\ind_{0>\xi}\big)\,\dif \xi
\end{equation}
hence the function $F^\varepsilon$ is not integrable with respect to $\xi$. For the purpose of the proof it is therefore more convenient to consider the process $h^\varepsilon(t)=F^\varepsilon(t)-\mathcal{S}(t,0)\ind_{0>\xi}$ instead and prove that it exists and is given by a suitable integral representation. Due to Corollary \ref{weaksol1}, $\mathcal{S}(t,s)\ind_{0>\xi}$ is the unique weak solution to \eqref{rov2} hence $h^\varepsilon$ solves
\begin{equation}\label{bgk4}
\begin{split}
\dif h^\varepsilon+a(\xi)\cdot\nabla h^\varepsilon\,\dif t&=\frac{(\ind_{u^\varepsilon>\xi}-\mathcal{S}(t,0)\ind_{0>\xi})-h^\varepsilon}{\varepsilon}\,\dif t-\partial_\xi h^\varepsilon\varPhi\,\dif W\\
&\qquad-\frac{1}{2}\partial_{\xi}\big(G^2(-\partial_\xi h^\varepsilon)\big)\,\dif t,\\
h^\varepsilon(0)&=\chi_{u_0^\varepsilon},
\end{split}
\end{equation}
in the sense of distributions.
Then, by Lemma \ref{prevod} and the weak version of Duhamel's principle, the problem \eqref{bgk4} admits an equivalent integral representation
\begin{equation}\label{sol}
h^\varepsilon(t)=\me^{-\frac{t}{\varepsilon}}\mathcal{S}(t,0)\chi_{u_0^\varepsilon}+\frac{1}{\varepsilon}\int_0^t\me^{-\frac{t-s}{\varepsilon}}\mathcal{S}(t,s)\big[\ind_{u^\varepsilon(s)>\xi}-\mathcal{S}(s,0)\ind_{0>\xi}\big]\dif s
\end{equation}
and thus can be solved by a fixed point method.
According to the identity
\begin{equation*}
\int_\mr|\ind_{\alpha>\xi}-\ind_{\beta>\xi}|\,\dif \xi=|\alpha-\beta|,\qquad \alpha,\,\beta\in\mr,
\end{equation*}
some space of $\xi$-integrable functions seems to be well suited to deal with the nonlinearity term $\ind_{u^\varepsilon>\xi}.$
Let us denote $\mathscr{H}=L^\infty(0,T;L^1(\Omega\times\mt^N\times\mr))$ and show that the mapping
\begin{equation*}
\begin{split}
\big(\mathscr{K}g\big)&(t)=\me^{-\frac{t}{\varepsilon}}\mathcal{S}(t,0)\chi_{u_0^\varepsilon}+\frac{1}{\varepsilon}\int_0^t\me^{-\frac{t-s}{\varepsilon}}\mathcal{S}(t,s)\big[\ind_{v(s)>\xi}-\mathcal{S}(s,0)\ind_{0>\xi}\big]\dif s,
\end{split}
\end{equation*}
where the local density $v(s)=\int_\mr (g(s,\xi)+\mathcal{S}(s,0)\ind_{0>\xi}-\ind_{0>\xi})\dif \xi$ is defined consistently with \eqref{dens}, is a contraction on $\mathscr{H}$.
Let $g,\,g_1,\,g_2\in \mathscr{H}$ with corresponding densities $v,\,v_1,$ $v_2$. By Proposition \ref{oper}, Corollary \ref{indik} and the assumptions on initial data, we arrive at
\begin{equation*}
\begin{split}
\big\|(\mathscr{K}g)&(t)\big\|_{L^1_{\omega,x,\xi}}\\
&\leq\me^{-\frac{t}{\varepsilon}}\|\chi_{u_0^\varepsilon}\|_{L^1_{\omega,x,\xi}} +\frac{1}{\varepsilon}\int_0^t\me^{-\frac{t-s}{\varepsilon}}\|\ind_{v(s)>\xi}-\mathcal{S}(s,0)\ind_{0>\xi}\|_{L^1_{\omega,x,\xi}}\dif s\\
&\leq \|u_0^\varepsilon\|_{L^1_{\omega,x}}+\sup_{0\leq s\leq t}\Big(\|\chi_{v(s)}\|_{L^1_{\omega,x,\xi}}+\|\mathcal{S}(s,0)\ind_{0>\xi}-\ind_{0>\xi}\|_{L^1_{\omega,x,\xi}}\Big)\\
&\leq C+\sup_{0\leq s\leq t}\|g(s)\|_{L^1_{\omega,x,\xi}},
\end{split}
\end{equation*}
with a constant independent on $t$,
hence
\begin{equation*}
\begin{split}
\big\|\mathscr{K}g\big\|_{L^\infty_tL^1_{\omega,x,\xi}}&\leq C+\|g\|_{L^\infty_tL^1_{\omega,x,\xi}}<\infty.
\end{split}
\end{equation*}
Next, we have
\begin{equation*}
\begin{split}
\big\|(\mathscr{K} g_1)(t)-(\mathscr{K}g_2)(t)\big\|_{L^1_{\omega,x,\xi}}&\leq\frac{1}{\varepsilon}\int_0^t\me^{-\frac{t-s}{\varepsilon}}\|\ind_{v_1(s)>\xi}-\ind_{v_2(s)>\xi}\|_{L^1_{\omega,x,\xi}}\dif s\\
&= \frac{1}{\varepsilon}\int_0^t\me^{-\frac{t-s}{\varepsilon}}\|v_1(s)-v_2(s)\|_{L^1_{\omega,x}}\dif s\\
&\leq \frac{1}{\varepsilon}\int_0^t\me^{-\frac{t-s}{\varepsilon}}\|g_1(s)-g_2(s)\|_{L^1_{\omega,x,\xi}}\dif s,
\end{split}
\end{equation*}
so
$$\big\|\mathscr{K} g_1-\mathscr{K}g_2\big\|_{L^\infty_tL^1_{\omega,x,\xi}}\leq \big(1-\me^{-\frac{T}{\varepsilon}}\big)\|g_1-g_2\|_{L^\infty_tL^1_{\omega,x,\xi}}$$
and according to the Banach fixed point theorem, the mapping $\mathscr{K}$ has a unique fixed point in $\mathscr{H}$. Moreover, we deduce from Corollary \ref{weaksol1} that $h^\varepsilon$ is measurable with respect to $\mathcal{P}\otimes\mathcal{B}(\mt^N)\otimes\mathcal{B}(\mr)$ and therefore, according to the semigroup property of the solution operator $\mathcal{S}$, we obtain the existence of a unique weak solution to \eqref{bgk} that is expressed as \eqref{sol1} and the proof is complete.
\end{proof}

\begin{rem}
As a consequence of Corollary \ref{weaksol1}, it can be seen that the representative $h^\varepsilon(t)$ of the unique weak solution to \eqref{bgk4} that is given by \eqref{sol} satisfies: $t\mapsto\langle h^\varepsilon(t),\phi\rangle$ is a continuous $(\mf_t)$-semimartingale for any $\phi\in C_c^\infty(\mt^N\times\mr)$. Accordingly, $t\mapsto\langle F^\varepsilon(t),\phi\rangle$ is a continuous $(\mf_t)$-semimartingale for any $\phi\in C_c^\infty(\mt^N\times\mr)$ provided $F^\varepsilon(t)$ is the representative of the unique weak solution to \eqref{bgk} given by \eqref{sol1}.
\end{rem}

\subsection{Further properties of the solution operator}
\label{further}

In the previous subsection we showed that the family $\mathcal{S}$ consists of bounded linear operators on $L^1(\Omega\times\mt^N\times\mr)$ with unit operator norm which was essential for the existence proof for the stochastic BGK model in Theorem \ref{duhamel}. Nevertheless, for the proof of convergence of the BGK approximation in the next section, namely, to derive certain uniform estimates, we need to study also its behavior in other spaces. In particular, $\mathcal{S}(t,s)X_0$ is well defined if $X_0\in L^p(\Omega\times\mt^N\times\mr)$ and we obtain the following result.

\begin{prop}\label{pest}
For any $p\in[2,\infty)$, the family $\mathcal{S}$ consists of bounded li\-near operators on $L^p(\Omega\times\mt^N\times\mr)$ having unit operator norm. Moreover, the solution to \eqref{rov1} belongs to $L^p(\Omega;L^\infty(0,T;L^p(\mt^N\times\mr)))$ provided $X_0\in L^p(\Omega\times\mt^N\times\mr)$ and the following estimate holds true
\begin{equation}\label{sec1}
\sup_{0\leq s\leq T}\stred\sup_{s\leq t\leq T}\big\|\mathcal{S}(t,s)X_0\big\|_{L^p_{x,\xi}}^p\leq C\,\|X_0\|_{L^p_{\omega,x,\xi}}^p.
\end{equation}

\begin{proof}
Note, that it is enough to prove the statement for any $\mathcal{S}^R$ as the limit case of $\mathcal{S}$ then follows by Fatou lemma provided the constant in \eqref{sec1} does not depend on $R$. If $R>0$ is fixed then we use the same approach as in the proof of Proposition \ref{oper}, i.e. we will only prove the statement under the additional assumption
\begin{equation*}
X_0\in L^p(\Omega\times\mt^N\times\mr)\cap L^\infty(\Omega\times\mt^N\times\mr).
\end{equation*}
Let $X_0^\delta$ be bounded, pathwise smooth and compactly supported regularizations of $X_0$ such that
\begin{equation*}
X_0^\delta\longrightarrow X_0\quad\text{in}\quad L^p(\Omega\times\mt^N\times\mr),\qquad \big\|X_0^\delta\big\|_{L^p_{\omega,x,\xi}}\leq\|X_0\|_{L^p_{\omega,x,\xi}},
\end{equation*}
and $X^\delta=\mathcal{S}^R(t,s)X_0^\delta$ is the unique solution to \eqref{rov2}.
Now, we apply the It\^o formula to the function $h(v)=\|v\|_{L^p_{x,\xi}}^p$. If $q$ is the conjugate exponent to $p$ then $h'(v)=p|v|^{p-2}v\in L^q(\mt^N\times\mr)$ and
$$h''(v)=p(p-1)|v|^{p-2}\mathrm{Id}\in\mathscr{L}(L^p(\mt^N\times\mr);L^q(\mt^N\times\mr)).$$
Therefore
\begin{equation*}
\begin{split}
\big\|X^\delta&(t)\big\|^p_{L^p_{x,\xi}}=\big\|X_0^\delta\big\|^p_{L^p_{x,\xi}}\\
&-p\int_s^t\int_{\mt^N}\int_\mr\big|X^\delta\big|^{p-2}X^\delta\, a^R(\xi)\cdot\nabla X^\delta\,\dif \xi\,\dif x\,\dif r\\
&-p\sum_{k=1}^d\int_s^t\int_{\mt^N}\int_\mr\big|X^\delta\big|^{p-2}X^\delta\partial_\xi X^\delta g^R_k(x,\xi)\,\dif \xi\,\dif x\,\dif \beta_k(r)\\
&+\frac{p}{2}\int_s^t\int_{\mt^N}\int_\mr\big|X^\delta\big|^{p-2}X^\delta\,\partial_\xi\big(G^{R,2}\partial_\xi X^\delta\big)\,\dif \xi\,\dif x\,\dif r\\
&+\frac{p(p-1)}{2}\int_s^t\int_{\mt^N}\int_\mr\big|X^\delta\big|^{p-2}\big|\partial_\xi X^\delta\big|^2 G^{R,2}(x,\xi)\,\dif \xi\,\dif x\,\dif r.
\end{split}
\end{equation*}
Using integration by parts, the second term on the right hand side vanishes. Besides, having known the behavior of $X^\delta$ for large $\xi$, we integrate by parts in the fourth term and obtain the fifth term with opposite sign. To deal with the stochastic term, we also integrate by parts and observe
\begin{equation*}
\begin{split}
-p\int_\mr&\big|X^\delta\big|^{p-2}X^\delta\partial_\xi X^\delta g^R_k(x,\xi)\,\dif \xi\\
&=p(p-1)\int_\mr\big|X^\delta\big|^{p-2}\partial_\xi X^\delta X^\delta g^R_k(x,\xi)\,\dif \xi+p\int_{\mr}\big|X^\delta\big|^{p}\partial_\xi g^R_k(x,\xi)\,\dif\xi
\end{split}
\end{equation*}
hence
$$-p\int_\mr\big|X^\delta\big|^{p-2}X^\delta\partial_\xi X^\delta g^R_k(x,\xi)\,\dif \xi=\int_{\mr}\big|X^\delta\big|^{p}\partial_\xi g^R_k(x,\xi)\,\dif\xi$$
and we arrive at
\begin{equation*}
\begin{split}
\big\|X^\delta&(t)\big\|^p_{L^p_{x,\xi}}=\big\|X_0^\delta\big\|^p_{L^p_{x,\xi}}+\sum_{k=1}^d\int_s^t\int_{\mt^N}\int_\mr\big|X^\delta\big|^{p}\partial_\xi g^R_k(x,\xi)\,\dif \xi\,\dif x\,\dif \beta_k(r),
\end{split}
\end{equation*}
where the stochastic integral on the right hand side is a martingale with zero expected value.
Taking the expectation now yields
\begin{equation*}
 \stred\big\|X^\delta(t)\big\|^p_{L^p_{x,\xi}}=\stred\big\|X_0^\delta\big\|^p_{L^p_{x,\xi}}.
\end{equation*}
In order to derive \eqref{sec1}, we employ the Burkholder-Davis-Gundy inequality and boundedness of $\partial_\xi g_k$:
\begin{equation*}
\begin{split}
\stred\sup_{s\leq t\leq T}&\big\|X^\delta(t)\big\|_{L^p_{x,\xi}}^p\leq\stred\big\|X_0^\delta\big\|_{L^p_{x,\xi}}^p\\
&\hspace{.8cm}+\sum_{k=1}^d\stred \sup_{s\leq t\leq T}\int_s^t\int_{\mt^N}\int_\mr\big|X^\delta\big|^p\partial_\xi g^R_k(x,\xi)\,\dif \xi\,\dif x\,\dif\beta_k(r)\\
&\leq \stred\big\|X_0^\delta\big\|_{L^p_{x,\xi}}^p+C\,\stred\bigg(\int_s^T\big\|X^\delta(r)\big\|_{L^p_{x,\xi}}^{2p}\dif r\bigg)^\frac{1}{2}\\
&\leq \stred\big\|X_0^\delta\big\|_{L^p_{x,\xi}}^p+\frac{1}{2}\,\stred\sup_{s\leq t\leq T}\big\|X^\delta(t)\big\|_{L^p_{x,\xi}}^p+C\int_s^T\stred\big\|X^\delta(r)\big\|_{L^p_{x,\xi}}^p\,\dif r\\
\end{split}
\end{equation*}
hence
\begin{equation*}
\begin{split}
\stred\sup_{s\leq t\leq T}\big\|X^\delta(t)\big\|_{L^p_{x,\xi}}^p\leq C\,\stred\big\|X_0^\delta\big\|_{L^p_{x,\xi}}^p.
\end{split}
\end{equation*}
Note, that the constant $C$ does not depend on $\delta,s,R$. Therefore, the fact that the operator norm is equal to 1 as well as the validity of \eqref{sec1} follow easily by the same reasoning as in the proof of Proposition \ref{oper}.
\end{proof}
\end{prop}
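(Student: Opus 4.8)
The plan is to follow the same reduction strategy as in Proposition \ref{oper}: prove everything for the truncated operators $\mathcal{S}^R$ with constants independent of $R$, and recover the statement for $\mathcal{S}$ by Fatou's lemma as $R\to\infty$. For fixed $R$ I would first argue under the extra hypothesis $X_0\in L^p(\Omega\times\mt^N\times\mr)\cap L^\infty(\Omega\times\mt^N\times\mr)$, approximating $X_0$ by bounded, pathwise smooth, compactly supported functions $X_0^\delta$ with $\|X_0^\delta\|_{L^p_{\omega,x,\xi}}\leq\|X_0\|_{L^p_{\omega,x,\xi}}$, so that $X^\delta=\mathcal{S}^R(t,s)X_0^\delta$ is a genuine strong solution to \eqref{rov2} and all the calculus below is justified. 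The restriction $p\geq 2$ enters precisely so that the functional $v\mapsto\|v\|_{L^p_{x,\xi}}^p$ is twice continuously (Fréchet) differentiable, with $h'(v)=p|v|^{p-2}v\in L^q$ and $h''(v)=p(p-1)|v|^{p-2}\,\mathrm{Id}$, enabling the use of the Itô formula.

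The core step is to apply the Itô formula to $\|X^\delta(t)\|_{L^p_{x,\xi}}^p$ along the solution. This generates a drift from the transport term $a^R\cdot\nabla X^\delta$, a drift from the second-order term $\tfrac12\partial_\xi(G^{R,2}\partial_\xi X^\delta)$, the Itô correction $\tfrac{p(p-1)}2\int|X^\delta|^{p-2}|\partial_\xi X^\delta|^2G^{R,2}$, and a martingale part. I would then integrate by parts in each term. The transport drift vanishes because $|X^\delta|^{p-2}X^\delta\,\partial_{x_i}X^\delta=\tfrac1p\,\partial_{x_i}|X^\delta|^p$ integrates to zero over $\mt^N$ by periodicity; integrating the second-order drift by parts in $\xi$ reproduces exactly the Itô correction with the opposite sign, so the two cancel; and integrating the stochastic term by parts leaves only $\int_\mr|X^\delta|^p\,\partial_\xi g_k^R\,\dif\xi$. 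One thus arrives at the identity $\|X^\delta(t)\|_{L^p_{x,\xi}}^p=\|X_0^\delta\|_{L^p_{x,\xi}}^p+\sum_k\int_s^t\langle|X^\delta|^p,\partial_\xi g_k^R\rangle\,\dif\beta_k(r)$. Since the stochastic integral is a true martingale under the $L^\infty$ assumption (as in Proposition \ref{oper}, using the finite moments of $\partial_\xi\psi^R$), taking expectations gives $\stred\|X^\delta(t)\|_{L^p_{x,\xi}}^p=\stred\|X_0^\delta\|_{L^p_{x,\xi}}^p$, which yields the unit operator norm.

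To upgrade this to the pathwise supremum bound \eqref{sec1}, I would apply the Burkholder--Davis--Gundy inequality to the martingale term and use the boundedness of $\partial_\xi g_k$ (uniform in $R$) to control its quadratic variation by $\int_s^T\|X^\delta(r)\|_{L^p_{x,\xi}}^{2p}\,\dif r$. A Young-type splitting then absorbs $\tfrac12\,\stred\sup_{s\leq t\leq T}\|X^\delta(t)\|_{L^p_{x,\xi}}^p$ into the left-hand side and leaves a term $C\int_s^T\stred\|X^\delta(r)\|_{L^p_{x,\xi}}^p\,\dif r$, which is already dominated by $C\,\stred\|X_0^\delta\|_{L^p_{x,\xi}}^p$ via the identity of the previous paragraph. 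This gives \eqref{sec1} with a constant independent of $\delta$, $s$ and $R$. Finally I would let $\delta\to0$, remove the $L^\infty$ hypothesis through the truncation $X_0^n=X_0\,\ind_{|X_0|\leq n}$ together with lower semicontinuity of the norm, and pass $R\to\infty$ by Fatou, exactly as in Proposition \ref{oper}.

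The main obstacle I anticipate is justifying the boundary-term-free integration by parts in $\xi$ simultaneously with the martingale (rather than merely local martingale) property of the stochastic integral. Both rest on adequate decay of $X^\delta$ and $\partial_\xi X^\delta$ as $|\xi|\to\infty$: this is why the compact support of $X_0^\delta$ is indispensable, and why one must invoke the growth control on the stochastic flow $\psi^R$ to ensure that $X_0^\delta(\psi^R_{s,r}(x,\xi))$ stays compactly supported in $\xi$ uniformly in $s,r,x$ and $\omega$, as well as to guarantee the finite moments of $\partial_\xi\psi^R$ that make the stochastic integral a true martingale. Once these uniform bounds are secured, the passage to the limit in $\delta$ and $R$ is routine.
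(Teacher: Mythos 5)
Your proposal is correct and follows essentially the same approach as the paper's own proof: the identical reduction to the truncated operators $\mathcal{S}^R$ with $R$-independent constants and Fatou's lemma, the same regularization $X_0^\delta$ under the auxiliary $L^\infty$ hypothesis, the It\^o formula for $v\mapsto\|v\|_{L^p_{x,\xi}}^p$ with exactly the same cancellations (transport term by periodicity, second-order drift against the It\^o correction, integration by parts in the stochastic term leaving $\int_\mr|X^\delta|^p\partial_\xi g_k^R\,\dif\xi$), the true-martingale argument via moments of $\partial_\xi\psi^R$, the Burkholder--Davis--Gundy estimate, and the same removal of the auxiliary hypotheses. Your one small variation --- bounding the remaining integral term directly by the $L^p$-identity $\stred\|X^\delta(r)\|_{L^p_{x,\xi}}^p=\stred\|X_0^\delta\|_{L^p_{x,\xi}}^p$ instead of leaving it as a Gronwall-type absorption --- is harmless and if anything slightly cleaner.
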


\begin{prop}\label{chi}
Assume that $w\in L^p(\Omega\times\mt^N)$ for all $p\in[1,\infty)$. Then for all $n\in[0,\infty)$ there exists $r\in[1,\infty)$ such that
\begin{equation*}
\sup_{0\leq s\leq T}\stred\sup_{s\leq t\leq T}\big\|\big(\mathcal{S}(t,s)\chi_w\big)(1+|\xi|)^n\big\|_{L^1_{x,\xi}}\leq C\Big(1+\|w\|_{L^{r}_{\omega,x}}^{r}\Big),
\end{equation*}
where the constant $C$ does not depend on $w$.

\begin{proof}
We will prove that the claim holds true for all $\mathcal{S}^R$ with a constant independent of $R$. Let us denote by $\psi^{R,x}$ the vector of all $x^i$-coordinates of the stochastic flow $\psi^R$, i.e. $\psi_{s,t}^{R,x}(x,\xi)=\big(\psi_{s,t}^{R,1}(x,\xi),\dots,\psi_{s,t}^{R,N}(x,\xi)\big)$. Since it holds, for any $m\in[0,\infty)$,
\begin{equation*}
|\chi_w|\leq \frac{(1+|w|^2)^m}{(1+|\xi|^2)^m}\ind_{|\xi|<|w|}
\end{equation*}
we can estimate
\begin{equation}\label{plo1}
\begin{split}
\big|\mathcal{S}^R&(t,s)\chi_w\big|(1+|\xi|^n)=\big|\chi_{w(\psi_{s,t}^{R,x}(x,\xi))}(\psi^{R,0}_{s,t}(x,\xi))\big|(1+|\xi|)^n\\
&\,\,\leq \frac{\big(1+\big|w(\psi^{R,x}_{s,t}(x,\xi))\big|^2\big)^m}{(1+|\psi_{s,t}^{R,0}(x,\xi)|^2)^m}\ind_{|\psi^{R,0}_{s,t}(x,\xi)|<|w(\psi^{R,x}_{s,t}(x,\xi))|}(1+|\xi|)^n\\
&\,\,\leq\frac{(1+|\xi|^2)^{n/2}}{(1+|\psi_{s,t}^{R,0}(x,\xi)|^2)^m}\,\mathcal{S}^R(t,s)\Big[(1+|w|^2)^m\ind_{|\xi|<|w|}\Big],
\end{split}
\end{equation}
where the exact value of the exponent $m$ will be determined later on.
Now, we make use of the classical moment estimate for SDEs that in our setting reads
\begin{equation*}
\sup_{\substack{0\leq s\leq T\\ (y,\zeta)\in\mt^N\!\times\mr}}\stred\sup_{s\leq t\leq T}\frac{(1+|\varphi^{R,0}_{s,t}(y,\zeta)|^2)^p}{(1+|\zeta|^2)^p}\leq C,\qquad\forall p\in[1,\infty),
\end{equation*}
and rewritten in terms of the inverse flow by setting $(x,\xi)=\varphi_{s,t}^R(y,\zeta)$
\begin{equation}\label{dud}
\sup_{\substack{0\leq s\leq T\\ (x,\xi)\in\mt^N\!\times\mr}}\stred\sup_{s\leq t\leq T}\frac{(1+|\xi|^2)^p}{(1+|\psi^{R,0}_{s,t}(x,\xi)|^2)^p}\leq C,\qquad\forall p\in[1,\infty),
\end{equation}
with a constant independent of $R$.
Therefore, employing \eqref{plo1}, the Young inequality, \eqref{dud} and Proposition \ref{pest} we obtain by a suitable choice of $m$
\begin{equation*}
\begin{split}
\sup_{0\leq s\leq T}&\stred\sup_{s\leq t\leq T}\int_{\mt^N}\int_\mr\big|\mathcal{S}^R(t,s)\chi_w\big|(1+|\xi|)^n\,\dif \xi\,\dif x\\
&\leq C\sup_{0\leq s\leq T}\stred\sup_{s\leq t\leq T}\int_{\mt^N}\int_\mr\frac{(1+|\xi|^2)^{n}}{(1+|\psi^{R,0}_{s,t}(x,\xi)|^2)^{2m}}\dif \xi\,\dif x\\
&\quad+C\sup_{0\leq s\leq T}\stred\sup_{s\leq t\leq T}\int_{\mt^N}\int_\mr\Big|\mathcal{S}^R(t,s)\Big[(1+|w|^2)^m\ind_{|\xi|<|w|}\Big]\Big|^2\dif \xi\,\dif x\\
&\leq C+C\big\|(1+|w|^2)^m\ind_{|\xi|<|w|}\big\|_{L^{2}_{\omega,x,\xi}}^{2}\leq C\Big(1+\|w\|_{L^{4m+1}_{\omega,x}}^{4m+1}\Big)
\end{split}
\end{equation*}
which completes the proof.
\end{proof}
\end{prop}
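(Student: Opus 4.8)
The plan is to prove the estimate first for each truncated operator $\mathcal{S}^R$ with a constant independent of $R$, and then to recover the claim for $\mathcal{S}$ by Fatou's lemma, using that $\mathcal{S}^R(t,s)\chi_w\to\mathcal{S}(t,s)\chi_w$ pointwise almost surely. For fixed $R$ I would start from the explicit representation $\mathcal{S}^R(t,s)\chi_w=\chi_w(\psi^R_{s,t}(x,\xi))$; since the $x$-dependence of $\chi_w$ sits entirely in $w(x)$ while the sign structure lives in $\xi$, this equals $\chi_{w(\psi^{R,x}_{s,t}(x,\xi))}(\psi^{R,0}_{s,t}(x,\xi))$, which separates the $\xi$-coordinate $\psi^{R,0}$ of the inverse flow from the remaining coordinates.

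The pointwise device I would rely on is that the equilibrium function is supported in $\{|\xi|<|w|\}$ and bounded by one there, so that for every $m\geq 0$ one has the elementary bound $|\chi_w(\xi)|\leq(1+|w|^2)^m(1+|\xi|^2)^{-m}\ind_{|\xi|<|w|}$. Composing this with the flow and inserting the weight $(1+|\xi|)^n\leq C(1+|\xi|^2)^{n/2}$ produces the factorization
$$|\mathcal{S}^R(t,s)\chi_w|(1+|\xi|)^n\leq C\,\frac{(1+|\xi|^2)^{n/2}}{(1+|\psi^{R,0}_{s,t}|^2)^m}\,\mathcal{S}^R(t,s)\big[(1+|w|^2)^m\ind_{|\xi|<|w|}\big],$$
isolating a pure $\xi$-decay factor, governed by the inverse flow, from a term to which the $L^p$-theory of $\mathcal{S}^R$ applies.

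Next I would split this product by Young's inequality $ab\leq\tfrac12 a^2+\tfrac12 b^2$ and integrate over $\mt^N\times\mr$. The contribution of the second factor is $\tfrac12\|\mathcal{S}^R(t,s)[(1+|w|^2)^m\ind_{|\xi|<|w|}]\|_{L^2_{x,\xi}}^2$, which after applying $\stred\sup_t$ is controlled by the $L^2$-estimate \eqref{sec1} of Proposition \ref{pest} by $C\|(1+|w|^2)^m\ind_{|\xi|<|w|}\|^2_{L^2_{\omega,x,\xi}}$; computing this norm by integrating the indicator over an interval of length $2|w|$ gives $C(1+\|w\|_{L^{4m+1}_{\omega,x}}^{4m+1})$, which fixes $r=4m+1$. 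For the first factor, after squaring I would write $(1+|\xi|^2)^n(1+|\psi^{R,0}_{s,t}|^2)^{-2m}$ as $(1+|\xi|^2)^{2m}(1+|\psi^{R,0}_{s,t}|^2)^{-2m}$ times $(1+|\xi|^2)^{n-2m}$; taking $\stred\sup_t$ of the first ratio is bounded, uniformly in $R$, $s$ and $(x,\xi)$, by the classical moment estimate for stochastic differential equations, while the factor $(1+|\xi|^2)^{n-2m}$ becomes integrable over $\mt^N\times\mr$ as soon as $m$ is chosen large, say $4m-2n>1$.

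The main obstacle is exactly this control of the decay at infinity of the $\xi$-coordinate $\psi^{R,0}$ of the inverse flow. Because the characteristic system is genuinely coupled in $(x,\xi)$, its dependence on $\xi$ cannot be read off directly, and the required moment bound must moreover be uniform in $R$ and in the initial time $s$. I would obtain it from the forward moment estimate for $\varphi^{R,0}$ through the change of variables $(x,\xi)=\varphi^R_{s,t}(y,\zeta)$, exploiting that the linear-growth bound \eqref{linrust} on the coefficients $g_k^R$ is independent of $R$. Choosing $m$ large enough then simultaneously guarantees integrability of the weight ratio in $\xi$ and produces a finite exponent $r$; this is the same mechanism that already underlies Corollary \ref{indik}.
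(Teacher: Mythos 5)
Your proposal is correct and takes essentially the same route as the paper's own proof: the pointwise bound $|\chi_w|\leq (1+|w|^2)^m(1+|\xi|^2)^{-m}\ind_{|\xi|<|w|}$ composed with the inverse flow, Young's inequality to separate the weight ratio $(1+|\xi|^2)^{n}(1+|\psi^{R,0}_{s,t}|^2)^{-2m}$ from the term handled by the $L^2$-estimate of Proposition \ref{pest}, the uniform-in-$R$ moment bound \eqref{dud} obtained from the forward flow via the substitution $(x,\xi)=\varphi^R_{s,t}(y,\zeta)$, the exponent $r=4m+1$ from integrating the indicator over an interval of length $2|w|$, and Fatou's lemma to pass from $\mathcal{S}^R$ to $\mathcal{S}$. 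You even make explicit the splitting $(1+|\xi|^2)^{2m}(1+|\psi^{R,0}_{s,t}|^2)^{-2m}\cdot(1+|\xi|^2)^{n-2m}$ with $4m-2n>1$, which the paper leaves implicit in the phrase ``by a suitable choice of $m$''.
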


\section{Convergence of the BGK approximation}
\label{convergence}

In this final section, we investigate the limit of the stochastic BGK model as $\varepsilon\rightarrow 0$ and prove our main result, Theorem \ref{main}. To be more precise, we consider the following weak formulation of \eqref{bgk}, which is satisfied by $F^\varepsilon$, and show its convergence to the kinetic formulation of \eqref{conser}. Let $\varphi\in C_c^\infty([0,T)\times\mt^N\times\mr)$ then
\begin{equation}\label{formul}
\begin{split}
&\quad\int_0^T\big\langle F^\varepsilon(t),\partial_t\varphi(t)\big\rangle\,\dif t+\big\langle F_0^\varepsilon,\varphi(0)\big\rangle+\int_0^T\big\langle F^\varepsilon(t),a\cdot\nabla\varphi(t)\big\rangle\,\dif t\\
&=-\frac{1}{\varepsilon}\int_0^T\big\langle\ind_{u^\varepsilon(t)>\xi}-F^\varepsilon(t),\varphi(t)\big \rangle\,\dif t+\int_0^T\big\langle \partial_\xi F^\varepsilon(t)\,\varPhi\,\dif W(t),\varphi(t)\big\rangle\\
&\qquad\quad+\frac{1}{2}\int_0^T\big\langle G^2\partial_\xi F^\varepsilon(t),\partial_\xi \varphi(t)\big\rangle\,\dif t.
\end{split}
\end{equation}
A similar expression holds true also for $h^\varepsilon$, namely, it satisfies the weak formulation of \eqref{bgk4}. However, as in the following we restrict our attention to the representatives $F^\varepsilon(t)$ and $h^\varepsilon(t)$, respectively, given by \eqref{sol1} and \eqref{sol}, respectively, we point out that both are true even in a stronger sense.
For the case of $h^\varepsilon(t)$, we have: let $\varphi\in C^\infty_c(\mt^N\times\mr)$ then it holds for all $t\in[0,T]$
\begin{equation}\label{formul1}
\begin{split}
\big\langle h^\varepsilon(t),\varphi\big\rangle&=\big\langle h_0^\varepsilon,\varphi\big\rangle+\int_0^t\big\langle h^\varepsilon(s),a\cdot\nabla\varphi\big\rangle\,\dif s\\
&\hspace{-1cm}+\frac{1}{\varepsilon}\int_0^t\big\langle\ind_{u^\varepsilon(s)>\xi}-\mathcal{S}(s,0)\ind_{0>\xi}-h^\varepsilon(s),\varphi\big\rangle\,\dif s\\
&\quad-\int_0^t\big\langle \partial_\xi h^\varepsilon(s)\,\varPhi\,\dif W(s),\varphi\big\rangle-\frac{1}{2}\int_0^t\big\langle G^2\partial_\xi h^\varepsilon(s),\partial_\xi\varphi\big\rangle\,\dif s.
\end{split}
\end{equation}

\begin{proof}[Proof of Theorem \ref{main}]
Taking the limit in \eqref{formul} is quite straightforward in all the terms apart from the first one on the right hand side and can be done immediately. Remark, that according to the representation formula \eqref{sol1} it holds that the set of solutions $\{F^\varepsilon;\,\varepsilon\in(0,1)\}$ is bounded in $L^\infty_{\mathcal{P}}(\Omega\times[0,T]\times\mt^N\times\mr)$, more precisely, $F^\varepsilon\in[0,1],\,\varepsilon\in(0,1).$ Therefore, by the Banach-Alaoglu theorem, there exists $F\in L^\infty_{\mathcal{P}}(\Omega\times[0,T]\times\mt^N\times\mr)$ such that, up to subsequences, 
\begin{equation}\label{weakstar}
F^\varepsilon\overset{w^*}{\longrightarrow}F\quad\text{in}\quad L^\infty_{\mathcal{P}}(\Omega\times[0,T]\times\mt^N\times\mr).
\end{equation}
Hence, almost surely,
$$\int_0^T\big\langle F^\varepsilon(t),\,\partial_t\varphi(t)\big\rangle\,\dif t\longrightarrow\int_0^T\big\langle F(t),\,\partial_t\varphi(t)\big\rangle\,\dif t,$$
$$\int_0^T\big\langle F^\varepsilon(t),a\cdot\nabla\varphi(t)\big\rangle\,\dif t\longrightarrow\int_0^T\big\langle F(t),a\cdot\nabla\varphi(t)\big\rangle\,\dif t,$$
$$\frac{1}{2}\int_0^T\big\langle G^2\partial_\xi F^\varepsilon(t),\partial_\xi \varphi(t)\big\rangle\,\dif t\longrightarrow\frac{1}{2}\int_0^T\big\langle G^2\partial_\xi F(t),\partial_\xi \varphi(t)\big\rangle\,\dif t.$$
and, according to the hypotheses on the initial data,
$$\big\langle F_0^\varepsilon,\varphi(0)\big\rangle\longrightarrow\big\langle \ind_{u_0>\xi},\varphi(0)\big\rangle.$$
We intend to prove a similar convergence result for the stochastic term as well. Since
$$\big\langle F^\varepsilon,\partial_\xi(g_k\varphi)\big\rangle\longrightarrow\big\langle F,\partial_\xi(g_k\varphi)\big\rangle,\qquad \text{a.e. } (\omega,t)\in\Omega\times[0,T],$$
and, due to the boundedness of $F^\varepsilon$ and the assumptions on $g_k$,
$$\big|\big\langle F^\varepsilon,\partial_\xi(g_k\varphi)\big\rangle\big|\leq C,$$
the dominated convergence theorem for stochastic integrals gives (up to subsequences) the desired almost sure convergence
$$\int_0^T\big\langle \partial_\xi F^\varepsilon(t)\,\varPhi\,\dif W(t),\varphi(t)\big\rangle\longrightarrow\int_0^T\big\langle \partial_\xi F(t)\,\varPhi\,\dif W(t),\varphi(t)\big\rangle.$$
Furthermore, multiplying \eqref{formul} by $\varepsilon$ yields, almost surely,
\begin{equation}\label{klad}
\int_0^T\big\langle\ind_{u^\varepsilon(t)>\xi}-F^\varepsilon(t),\varphi(t)\big\rangle\,\dif t\longrightarrow 0
\end{equation}
and, in particular, 
\begin{equation}\label{lll}
\partial_\xi\ind_{u^\varepsilon>\xi}-\partial_\xi F^\varepsilon\longrightarrow 0
\end{equation}
in the sense of distributions over $(0,T)\times\mt^N\times\mr$ almost surely.
In order to obtain the convergence in the remaining term of \eqref{formul} and in view of the kinetic formulation of \eqref{conser}, we need to show that the term $\frac{1}{\varepsilon}(\ind_{u^\varepsilon>\xi}-F^\varepsilon)$ can be written as $\partial_\xi m^\varepsilon$ where $m^\varepsilon$ is a random nonnegative measure over $[0,T]\times\mt^N\times\mr$ bounded uniformly in $\varepsilon$. However, if we define
\begin{equation}\label{meas}
\begin{split}
m^\varepsilon(\xi)&=\frac{1}{\varepsilon}\int_{-\infty}^\xi \big(\ind_{u^\varepsilon>\zeta}-F^\varepsilon(\zeta)\big)\,\dif\zeta\\
&=\frac{1}{\varepsilon}\int_{-\infty}^\xi \big(\ind_{u^\varepsilon>\zeta}-\mathcal{S}(t,0)\ind_{0>\zeta}-h^\varepsilon(\zeta)\big)\,\dif\zeta,
\end{split}
\end{equation}
it is easy to check that $m^\varepsilon\geq 0$ since $F^\varepsilon\in[0,1]$. Indeed, $m^\varepsilon(-\infty)=m^\varepsilon(\infty)=0$ and $m^\varepsilon(t,x,\cdot)$ is increasing if $\xi\in(-\infty,u^\varepsilon(t,x))$ and decreasing if $\xi\in(u^\varepsilon(t,x),\infty)$.

Due to the convergence in \eqref{formul} it can be seen that for almost every $\omega\in\Omega$ there exists a distribution $m(\omega)$ such that, almost surely,
\begin{equation}\label{ddr}
\int_0^T\big\langle m^\varepsilon,\varphi(t)\big\rangle\,\dif t\longrightarrow \int_0^T\big\langle m,\varphi(t)\big\rangle\,\dif t,
\end{equation}
for any $\varphi\in C_c^\infty([0,T)\times\mt^N\times\mr)$. Besides, the conditions on test functions can be relaxed so that \eqref{ddr} holds true for any $\varphi\in C_c^\infty([0,T]\times\mt^N\times\mr)$.
Now, it remains to verify that $m$ is a kinetic measure. The following proposition will be useful.

\begin{prop}\label{densities}
The set of local densities $\{u^\varepsilon;\,\varepsilon\in(0,1)\}$ is bounded in $L^p(\Omega;L^\infty(0,T;L^p(\mt^N)))$ for all $p\in[1,\infty)$.

\begin{proof}
We need to find a uniform estimate for $u^\varepsilon$.
It follows from the definition of $u^\varepsilon$ \eqref{dens} and \eqref{sol1} that
\begin{equation*}
\begin{split}
u^\varepsilon(t,x)&=\me^{-\frac{t}{\varepsilon}}\int_\mr\big(\mathcal{S}(t,0)\ind_{u_0^\varepsilon>\xi}-\ind_{0>\xi}\big)\,\dif\xi\\
&\qquad+\frac{1}{\varepsilon}\int_0^t\me^{-\frac{t-s}{\varepsilon}}\int_\mr\big(\mathcal{S}(t,s)\ind_{u^\varepsilon(s)>\xi}-\ind_{0>\xi}\big)\,\dif\xi\,\dif s.
\end{split}
\end{equation*}
Let us now define the following auxiliary function
$$H(s)=\left|\int_\mr\big(\mathcal{S}(t,s)\ind_{u^\varepsilon(s)>\xi}-\ind_{0>\xi}\big)\,\dif\xi\right|.$$
Then
$$H(t)\leq \me^{-\frac{t}{\varepsilon}} H(0)+(1-\me^{-\frac{t}{\varepsilon}})\max_{0\leq s\leq t} H(s)$$
and we conclude that $H(t)\leq H(0),\,t\in[0,T]$. In order to estimate $H(0)$, we make use of Proposition \ref{chi} and Corollary \ref{indik}. If $p=1$ they can be used directly
\begin{equation*}
\begin{split}
\stred\sup_{0\leq t\leq T}&\int_{\mt^N}|u^\varepsilon(t,x)|\,\dif x\leq \stred \sup_{0\leq t\leq T}\int_{\mt^N}\int_\mr\big|\mathcal{S}(t,0)\ind_{u^\varepsilon_0>\xi}-\ind_{0>\xi}\big|\,\dif \xi\,\dif x\\
&\leq \stred \sup_{0\leq t\leq T}\big\|\mathcal{S}(t,0)\chi_{u^\varepsilon_0}\big\|_{L^1_{x,\xi}}+ \stred \sup_{0\leq t\leq T}\big\|\mathcal{S}(t,0)\ind_{0>\xi}-\ind_{0>\xi}\big\|_{L^1_{x,\xi}}\\
&\leq C\Big(1+\|u_0^\varepsilon\|_{L^{r_1}_{\omega,x}}^{r_1}\Big),
\end{split}
\end{equation*}
whereas the case of $p\in(1,\infty)$ can be dealt with by the H\"older inequality and the fact that
$$\big|\mathcal{S}(t,0)\ind_{u^\varepsilon_0>\xi}-\ind_{0>\xi}\big|^p=\big|\mathcal{S}(t,0)\ind_{u^\varepsilon_0>\xi}-\ind_{0>\xi}\big|.$$
Indeed,
\begin{equation*}
\begin{split}
\stred\sup_{0\leq t\leq T}&\int_{\mt^N}|u^\varepsilon(t,x)|^p\,\dif x\leq \stred \sup_{0\leq t\leq T}\int_{\mt^N}\bigg(\int_\mr\big|\mathcal{S}(t,0)\ind_{u^\varepsilon_0>\xi}-\ind_{0>\xi}\big|\,\dif \xi\bigg)^p\dif x\\
&\leq C\,\stred \sup_{0\leq t\leq T}\big\|\mathcal{S}(t,0)\chi_{u^\varepsilon_0}(1+|\xi|)^p\big\|_{L^1_{x,\xi}}\\
&\qquad+ C\,\stred \sup_{0\leq t\leq T}\big\|\big(\mathcal{S}(t,0)\ind_{0>\xi}-\ind_{0>\xi}\big)(1+|\xi|)^p\big\|_{L^1_{x,\xi}}\\
&\leq C\Big(1+\|u_0^\varepsilon\|_{L^{r_p}_{\omega,x}}^{r_p}\Big).
\end{split}
\end{equation*}
The above exponents $r_p$ are given by Proposition \ref{chi} and the proof is complete.
\end{proof}
\end{prop}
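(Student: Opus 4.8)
The plan is to derive a pointwise bound on $|u^\varepsilon(t,x)|$ that depends only on the initial datum $u_0^\varepsilon$, thereby transferring the estimate away from the uncontrolled, $\varepsilon$-singular collision term and onto a quantity that Proposition \ref{chi} and Corollary \ref{indik} handle with a constant independent of $\varepsilon$. First I would integrate the Duhamel representation \eqref{sol1} in $\xi$; using $\me^{-t/\varepsilon}+\frac1\varepsilon\int_0^t\me^{-(t-s)/\varepsilon}\,\dif s=1$ to absorb the $\ind_{0>\xi}$ term, this yields
\begin{equation*}
u^\varepsilon(t,x)=\me^{-\frac t\varepsilon}\!\int_\mr\!\big(\mathcal{S}(t,0)\ind_{u_0^\varepsilon>\xi}-\ind_{0>\xi}\big)\dif\xi+\frac1\varepsilon\!\int_0^t\!\me^{-\frac{t-s}\varepsilon}\!\int_\mr\!\big(\mathcal{S}(t,s)\ind_{u^\varepsilon(s)>\xi}-\ind_{0>\xi}\big)\dif\xi\,\dif s.
\end{equation*}

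Second, I would introduce for fixed $t$ the auxiliary function $H(s)=\big|\int_\mr(\mathcal{S}(t,s)\ind_{u^\varepsilon(s)>\xi}-\ind_{0>\xi})\,\dif\xi\big|$, note that $H(t)=|u^\varepsilon(t,x)|$ because $\mathcal{S}(t,t)=\mathrm{Id}$, and apply the triangle inequality to the displayed identity. Since $\frac1\varepsilon\int_0^t\me^{-(t-s)/\varepsilon}\,\dif s=1-\me^{-t/\varepsilon}$, this produces the convex-combination bound $H(t)\le \me^{-t/\varepsilon}H(0)+(1-\me^{-t/\varepsilon})\max_{0\le s\le t}H(s)$, and a maximum-principle argument then forces $\max_{0\le s\le t}H(s)=H(0)$, giving the pointwise estimate
\begin{equation*}
|u^\varepsilon(t,x)|\le\int_\mr\big|\mathcal{S}(t,0)\ind_{u_0^\varepsilon>\xi}-\ind_{0>\xi}\big|\,\dif\xi .
\end{equation*}

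Third, using $\ind_{u_0^\varepsilon>\xi}=\chi_{u_0^\varepsilon}+\ind_{0>\xi}$ and linearity of $\mathcal{S}(t,0)$, the integrand splits as $|\mathcal{S}(t,0)\chi_{u_0^\varepsilon}|+|\mathcal{S}(t,0)\ind_{0>\xi}-\ind_{0>\xi}|$, each bounded by $1$. For $p=1$ I would integrate in $x$, take $\stred\sup_t$, and invoke Proposition \ref{chi} (with $n=0$, $w=u_0^\varepsilon$) and Corollary \ref{indik} (with $n=0$). For $p>1$ the crux is a weighted Hölder step: since the relevant integrand $g$ satisfies $0\le g\le 1$, writing $g=[g^{1/p}(1+|\xi|)][g^{1/q}(1+|\xi|)^{-1}]$ with $q$ conjugate to $p$ and using $\int_\mr(1+|\xi|)^{-q}\,\dif\xi<\infty$ (valid as $q>1$) gives $(\int_\mr g\,\dif\xi)^p\le C\int_\mr g\,(1+|\xi|)^p\,\dif\xi$. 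Integrating in $x$, taking $\stred\sup_t$, and applying Proposition \ref{chi} (with $n=p$) and Corollary \ref{indik} (with $n=p$) yields
\begin{equation*}
\stred\sup_{0\le t\le T}\|u^\varepsilon(t)\|_{L^p_x}^p\le C\Big(1+\|u_0^\varepsilon\|_{L^{r_p}_{\omega,x}}^{r_p}\Big),
\end{equation*}
whose right-hand side is uniformly bounded in $\varepsilon$ because $\{u_0^\varepsilon\}$ is bounded in every $L^r(\Omega\times\mt^N)$, completing the proof.

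The main obstacle is the reduction to the initial datum, that is the step $\max_{0\le s\le t}H(s)=H(0)$. This is exactly what decouples the estimate from the intermediate densities $u^\varepsilon(s)$ and so avoids the circular dependence on higher moments that would arise if one applied Proposition \ref{chi} directly with $w=u^\varepsilon(s)$; it is also the single place where $\varepsilon$-uniformity is secured, since the coefficients of the convex combination are precisely the relaxation weights. The secondary technical point is the weighted Hölder inequality, whose validity relies on the integrand being indicator-type (bounded by $1$), which matches exactly the polynomial-weight framework in which Proposition \ref{chi} and Corollary \ref{indik} were established.
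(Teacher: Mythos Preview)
Your proof takes essentially the same approach as the paper's: the Duhamel representation in $\xi$, the auxiliary function $H(s)$ and its convex-combination bound reducing to $H(0)$, the splitting $\ind_{u_0^\varepsilon>\xi}=\chi_{u_0^\varepsilon}+\ind_{0>\xi}$, and the weighted H\"older step $(\int g\,\dif\xi)^p\le C\int g(1+|\xi|)^p\,\dif\xi$ for $p>1$ followed by Proposition~\ref{chi} and Corollary~\ref{indik} are exactly what the paper does. The paper is equally terse at the reduction step, simply asserting ``we conclude that $H(t)\le H(0)$'' from the convex-combination inequality, so your identification of this as the main obstacle is on point.
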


\begin{cor}\label{ff}
For any $n\in[0,\infty)$ it holds
\begin{equation*}
\sup_{0\leq t\leq T}\stred\,\big\|h^\varepsilon(t)(1+|\xi|)^n\big\|_{L^1_{x,\xi}}\leq C.
\end{equation*}

\begin{proof}
It follows from \eqref{sol}, Proposition \ref{chi}, Corollary \ref{indik} and Proposition \ref{densities} that
\begin{equation*}
\begin{split}
\sup_{0\leq t\leq T}&\stred\,\big\|h^\varepsilon(t)(1+|\xi|)^n\big\|_{L^1_{x,\xi}}\leq \sup_{0\leq s\leq t\leq T}\stred\,\big\|\mathcal{S}(t,s)\chi_{u^\varepsilon(s)}(1+|\xi|)^n\big\|_{L^1_{x,\xi}}\\
&\qquad+\sup_{0\leq s \leq t\leq T}\stred\,\big\|\big(\ind_{0>\xi}-\mathcal{S}(s,0)\ind_{0>\xi}\big)(1+|\xi|)^n\big\|_{L^1_{x,\xi}}\\
&\leq C\Big(1+\sup_{0\leq s\leq T}\|u^\varepsilon(s)\|_{L^r_{\omega,x}}^r\Big)\leq C.
\end{split}
\end{equation*}
\end{proof}
\end{cor}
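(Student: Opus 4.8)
The plan is to read the bound off directly from the integral representation \eqref{sol} of $h^\varepsilon$, reducing everything to the three weighted estimates already at our disposal: Proposition \ref{chi} for the equilibrium pieces, Corollary \ref{indik} for the pieces built from $\ind_{0>\xi}$, and Proposition \ref{densities} to control the local densities uniformly in $\varepsilon$ and in time. No new estimate on the stochastic flow should be needed, since all the decay in $\xi$ has already been encoded in Proposition \ref{chi} and Corollary \ref{indik}; the corollary is essentially an assembly of these.

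First I would rewrite the integrand in \eqref{sol} so that genuine equilibrium functions appear. Using the elementary identity $\ind_{u^\varepsilon(s)>\xi}=\chi_{u^\varepsilon(s)}+\ind_{0>\xi}$ together with the semigroup law $\mathcal{S}(t,s)\circ\mathcal{S}(s,0)=\mathcal{S}(t,0)$ from Corollary \ref{weaksol1}, I obtain
$$\mathcal{S}(t,s)\big[\ind_{u^\varepsilon(s)>\xi}-\mathcal{S}(s,0)\ind_{0>\xi}\big]=\mathcal{S}(t,s)\chi_{u^\varepsilon(s)}+\big(\mathcal{S}(t,s)\ind_{0>\xi}-\mathcal{S}(t,0)\ind_{0>\xi}\big).$$
Hence, after multiplying by the weight $(1+|\xi|)^n$, taking the norm $\|\cdot\|_{L^1_{x,\xi}}$ and applying the triangle inequality, $h^\varepsilon(t)$ splits into equilibrium pieces $\mathcal{S}(t,0)\chi_{u_0^\varepsilon}$ and $\mathcal{S}(t,s)\chi_{u^\varepsilon(s)}$, plus indicator pieces which, by one more triangle inequality, are of the form $\mathcal{S}(t,s)\ind_{0>\xi}-\ind_{0>\xi}$ and $\mathcal{S}(t,0)\ind_{0>\xi}-\ind_{0>\xi}$.

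Second I would absorb the exponential weights: since $\me^{-t/\varepsilon}\le 1$ and $\tfrac1\varepsilon\int_0^t\me^{-(t-s)/\varepsilon}\,\dif s=1-\me^{-t/\varepsilon}\le 1$, each averaged term is dominated by the supremum over $0\le s\le t\le T$ of the corresponding weighted norm, with constants independent of $\varepsilon$. To the equilibrium pieces I apply Proposition \ref{chi} (with $w=u_0^\varepsilon$ and with $w=u^\varepsilon(s)$, respectively), producing a bound of the form $C\big(1+\|w\|_{L^{r}_{\omega,x}}^{r}\big)$ for a finite $r=r(n)$; to the indicator pieces I apply Corollary \ref{indik}, which bounds them by a constant. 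Crucially, all these estimates are already stated with $\stred\sup_{s\le t\le T}$ inside, so the supremum in $t$ passes through the time integral without difficulty.

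The only remaining point, and the one I expect to be the crux, is the uniform control of the quantity $\|u^\varepsilon(s)\|_{L^{r}_{\omega,x}}^{r}$ left over from Proposition \ref{chi}: this is exactly where Proposition \ref{densities} enters, giving $\sup_{0\le s\le T}\|u^\varepsilon(s)\|_{L^{r}_{\omega,x}}^{r}\le\stred\sup_{0\le s\le T}\|u^\varepsilon(s)\|_{L^{r}_x}^{r}\le C$ uniformly in $\varepsilon$. Since the data $u_0^\varepsilon$ are assumed bounded in every $L^p(\Omega\times\mt^N)$, the initial equilibrium term is handled the same way. Collecting all the pieces then yields the asserted bound; everything beyond the decomposition and the invocation of the three cited results is routine.
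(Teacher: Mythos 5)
Your proof is correct and takes essentially the same route as the paper: the same decomposition of the Duhamel formula \eqref{sol} via $\ind_{u^\varepsilon(s)>\xi}=\chi_{u^\varepsilon(s)}+\ind_{0>\xi}$ and the semigroup law, absorption of the exponential kernel using $\me^{-t/\varepsilon}\leq 1$ and $\frac{1}{\varepsilon}\int_0^t\me^{-(t-s)/\varepsilon}\,\dif s\leq 1$, and the same three inputs (Proposition \ref{chi} for the equilibrium pieces, Corollary \ref{indik} for the indicator differences, Proposition \ref{densities} for the uniform density bound). Your write-up merely spells out explicitly the algebra and the splitting of the indicator term into $\mathcal{S}(t,s)\ind_{0>\xi}-\ind_{0>\xi}$ and $\mathcal{S}(t,0)\ind_{0>\xi}-\ind_{0>\xi}$ that the paper's one-display proof compresses.
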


As a consequence, the assumptions of \cite[Theorem 5]{debus} are satisfied for $\nu^\varepsilon_{t,x}=\delta_{u^\varepsilon(t,x)=\xi}$ and hence there exists a kinetic measure $\nu_{t,x}$ vanishing at infinity such that $\nu^\varepsilon\rightarrow \nu$ in the sense given by this theorem. We deduce from \eqref{lll} that $\partial_\xi F=-\nu$ hence $F$ is a kinetic function in the sense of \cite[Definition 4]{debus}.

Remark, that it follows now from \eqref{meas} that the function $m^\varepsilon(t)$ satisfies
$$\sup_{0\leq t\leq T}\stred\,\big\|m^\varepsilon(t)(1+|\xi|)^n\big\|_{L^1_{x,\xi}}\leq C(\varepsilon),$$
for any $\varepsilon$ fixed. Nevertheless, we do not know yet if this fact holds true also uniformly in $\varepsilon$. Towards this end, we will study the weak formulation for $h^\varepsilon$ and employ a suitable test function.

\begin{prop}\label{kin}
For any $p\in[0,\infty)$ it holds
\begin{equation}\label{kin1}
\stred\int_{[0,T]\times\mt^N\times\mr}|\xi|^{2p}\,\dif m^\varepsilon(t,x,\xi)\leq C.
\end{equation}

\begin{proof}
Let $p\in[1/2,\infty)$. Regarding \eqref{formul1}, we need to test by $\varphi(\xi)=\frac{\xi^{2p+1}}{2p+1}$. Due to the behavior of $m^\varepsilon$ and $h^\varepsilon$ for large $\xi$ we can consider test functions which are not compactly supported in $\xi$, however, in this case the stochastic integral is not necessarily a martingale. Therefore we will first employ the truncation $\varphi^\delta(\xi)=\varphi(\xi)k_\delta(\xi)$ and then pass to the limit. We have
\begin{equation*}
\begin{split}
0\leq\stred\,\int_0^T\big\langle m^\varepsilon(t),\partial_\xi\varphi^\delta\big\rangle\,\dif t&=\stred\,\big\langle h_0^\varepsilon,\varphi^\delta\big\rangle-\stred\,\big\langle h^{\varepsilon}(T),\varphi^\delta\big\rangle\\
&\quad-\frac{1}{2}\,\stred\,\int_0^T\big\langle G^2\partial_\xi h^\varepsilon(t),\partial_\xi\varphi^\delta\big\rangle\,\dif t.
\end{split}
\end{equation*}
The first and the second term on the right hand side can be estimated by Corollary \ref{ff}
\begin{equation*}
\begin{split}
\stred\,\big\langle h_0^\varepsilon,\varphi^\delta\big\rangle-\stred\,\big\langle h^{\varepsilon}(T),\varphi^\delta\big\rangle
\leq C,
\end{split}
\end{equation*}
while for the remaining term we first employ the growth properties of $G^2$ and $\partial_\xi G^2$ to obtain
\begin{equation*}
\begin{split}
\stred\,\int_0^T&\big\langle G^2\partial_\xi h^\varepsilon(t),\partial_\xi\varphi^\delta\big\rangle\,\dif t\\
&\leq C\,\stred\int_0^T\big\langle|h^\varepsilon(t)|,(1+|\xi|)\partial_\xi\varphi^\delta+(1+|\xi|^2)\partial_\xi^2\varphi^\delta\big\rangle\,\dif t\\
&\leq C\,\stred\int_0^T\big\langle|h^\varepsilon(t)|,(1+|\xi|)^{2p+3}\big\rangle\,\dif t\leq C.
\end{split}
\end{equation*}
The constant $C$ is independent of $\delta$ thus the claim follows.

If $p=0$ a suitable modification in the above estimation leads to the proof in this case whereas the case of $p\in(0,1/2)$ follows from \eqref{kin1} for $p=0$ and $p=1/2$ due to the fact that $|\xi|^{2p}\leq 1+|\xi|$.
\end{proof}
\end{prop}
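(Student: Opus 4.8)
The plan is to test the weak formulation \eqref{formul1} for $h^\varepsilon$ against a function of $\xi$ alone, chosen so that its $\xi$-derivative reproduces the weight $|\xi|^{2p}$ in \eqref{kin1}. For $p\in[1/2,\infty)$ I take $\varphi(\xi)=\frac{\xi^{2p+1}}{2p+1}$, so that $\partial_\xi\varphi=|\xi|^{2p}$. Since $\partial_\xi m^\varepsilon=\varepsilon^{-1}\big(\ind_{u^\varepsilon>\xi}-\mathcal{S}(t,0)\ind_{0>\xi}-h^\varepsilon\big)$ by \eqref{meas}, the collision term in \eqref{formul1} is exactly $-\int_0^t\langle m^\varepsilon(s),\partial_\xi\varphi\rangle\,\dif s$ after one integration by parts in $\xi$, while the transport term $\langle h^\varepsilon,a\cdot\nabla\varphi\rangle$ vanishes because $\varphi$ does not depend on $x$. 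Evaluating \eqref{formul1} at $t=T$, taking the expectation and rearranging then isolates
\begin{equation*}
\stred\int_0^T\langle m^\varepsilon(t),\partial_\xi\varphi\rangle\,\dif t=\stred\langle h_0^\varepsilon,\varphi\rangle-\stred\langle h^\varepsilon(T),\varphi\rangle-\frac12\stred\int_0^T\langle G^2\partial_\xi h^\varepsilon(t),\partial_\xi\varphi\rangle\,\dif t,
\end{equation*}
whose left-hand side is precisely $\stred\int|\xi|^{2p}\,\dif m^\varepsilon$ and is nonnegative since $m^\varepsilon\geq0$.

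It then remains to bound the three terms on the right uniformly in $\varepsilon$. The first two are controlled directly by Corollary \ref{ff} with $n=2p+1$, using $|\varphi|\leq C(1+|\xi|)^{2p+1}$ together with $h_0^\varepsilon=h^\varepsilon(0)$. For the third, $\partial_\xi h^\varepsilon$ is only a distribution, so I would integrate by parts in $\xi$ to shift the derivative onto the smooth test function, writing $\langle G^2\partial_\xi h^\varepsilon,\partial_\xi\varphi\rangle=-\langle h^\varepsilon,\partial_\xi G^2\,\partial_\xi\varphi+G^2\,\partial_\xi^2\varphi\rangle$; the growth bounds $G^2\leq C(1+|\xi|^2)$, $|\partial_\xi G^2|\leq C(1+|\xi|)$ together with $|\partial_\xi\varphi|\leq C|\xi|^{2p}$ and $|\partial_\xi^2\varphi|\leq C|\xi|^{2p-1}$ dominate the integrand by $C(1+|\xi|)^{2p+1}|h^\varepsilon|$, so that Corollary \ref{ff} again yields a bound independent of $\varepsilon$.

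The main obstacle, and the reason this cannot be done naively, is that $\varphi(\xi)=\frac{\xi^{2p+1}}{2p+1}$ is not compactly supported in $\xi$: it is not an admissible test function in \eqref{formul1}, and the stochastic integral $-\int_0^t\langle\partial_\xi h^\varepsilon\varPhi\,\dif W,\varphi\rangle$ need not be a genuine martingale, so it cannot simply be discarded under the expectation. I would circumvent this by inserting the smooth truncation $\varphi^\delta=\varphi\,k_\delta$, which is compactly supported, so the stochastic term becomes a true martingale with vanishing mean; all the estimates above carry over with constants independent of $\delta$, the extra contributions of $\partial_\xi k_\delta$ and $\partial_\xi^2 k_\delta$ being harmless because $|\partial_\xi^j k_\delta|\leq C$ and they only enlarge the fixed power $(1+|\xi|)^{2p+1}$. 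Letting $\delta\to0$, using $m^\varepsilon\geq0$ and monotone convergence on the $|\xi|^{2p}k_\delta$ part, and the already established (for each fixed $\varepsilon$) decay of $m^\varepsilon$ to kill the $\partial_\xi k_\delta$ correction, delivers \eqref{kin1} with a constant independent of $\varepsilon$.

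Finally I would dispose of the boundary cases: $p=0$ is the same computation with $\varphi(\xi)=\xi$, where the $\partial_\xi^2\varphi$ term is simply absent, and the range $p\in(0,1/2)$ follows from the cases $p=0$ and $p=1/2$ via the elementary inequality $|\xi|^{2p}\leq1+|\xi|$.
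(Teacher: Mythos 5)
Your proposal is correct and follows the paper's proof essentially step for step: the same test function $\varphi(\xi)=\frac{\xi^{2p+1}}{2p+1}$, the same truncation $\varphi^\delta=\varphi\,k_\delta$ to make the stochastic integral a genuine martingale with vanishing expectation, the same identity isolating $\stred\int_0^T\langle m^\varepsilon,\partial_\xi\varphi^\delta\rangle\,\dif t$, the same use of Corollary \ref{ff} after shifting the $\xi$-derivative in the $G^2$ term onto the test function, and the same treatment of the boundary cases $p=0$ and $p\in(0,1/2)$. If anything, you are slightly more explicit than the paper at the step $\delta\rightarrow0$ (which the paper compresses into ``the constant is independent of $\delta$''), correctly observing that the $\varphi\,\partial_\xi k_\delta$ correction in $\langle m^\varepsilon,\partial_\xi\varphi^\delta\rangle$ must be killed using the fixed-$\varepsilon$ decay of $m^\varepsilon$ before monotone convergence yields \eqref{kin1}.
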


Setting $p=0$ in \eqref{kin1} we regard $m^\varepsilon$ as random variables with values in $\mathcal{M}_b([0,T]\times\mt^N\times\mr)$, the space of bounded Borel measures on $[0,T]\times\mt^N\times\mr$ whose norm is given by the total variation of measures. We deduce that the set of laws $\{\prst\circ [m^\varepsilon]^{-1};\,\varepsilon\in(0,1)\}$ is tight and therefore any sequence has a weakly convergent subsequence due to the Prokhorov theorem. Consequently, the law of $m$ is supported in $\mathcal{M}_b([0,T]\times\mt^N\times\mr)$. Besides, $m$ is nonnegative as it holds true for all $m^\varepsilon$. Moreover, since $C_0([0,T]\times\mt^N\times\mr)$, the space of continuous functions vanishing at infinity equipped with the supremum norm, is the predual of $\mathcal{M}_b([0,T]\times\mt^N\times\mr)$ and $C_c^\infty([0,T]\times\mt^N\times\mr)$ is dense in $C_0([0,T]\times\mt^N\times\mr)$ it can be seen that \eqref{ddr} holds true for any $\varphi\in C_0([0,T]\times\mt^N\times\mr)$.
Now, it is left to verify the three points of the definition of a kinetic measure \cite[Definition 1]{debus}. The second requirement giving the behavior for large $\xi$ follows from the above uniform estimate \eqref{kin1}. Indeed, let $(k_\delta)$ be a truncation on $\mr$, e.g. the set of functions defined in the proof of Proposition \ref{oper}, then
\begin{equation*}
\begin{split}
\stred\int_{[0,T]\times\mt^N\times\mr}&|\xi|^{2p}\,\dif m(t,x,\xi)\leq\liminf_{\delta\rightarrow0}\stred\int_{[0,T]\times\mt^N\times\mr}|\xi|^{2p}k_\delta(\xi)\,\dif m(t,x,\xi)\\
&=\liminf_{\delta\rightarrow0}\lim_{\varepsilon\rightarrow 0}\stred\int_{[0,T]\times\mt^N\times\mr}|\xi|^{2p}k_\delta(\xi)\,\dif m^\varepsilon(t,x,\xi)\leq C.
\end{split}
\end{equation*}
As a consequence, $m$ vanishes for large $\xi$. The first point of \cite[Definition 1]{debus} is straightforward for $\phi\in C_0([0,T]\times\mt^N\times\mr)$ as a pointwise limit of measurable functions is measurable. The case of $\phi\in C_b([0,T]\times\mt^N\times\mr)$ now follows by employing the truncation $(k_\delta)$ together with the dominated convergence theorem as $\delta\rightarrow 0$ and the behavior of $m$ at for large $\xi$. In order to show predictability of the process
$$t\longmapsto\int_{[0,t]\times\mt^N\times\mr}\phi(x,\xi)\,\dif m(s,x,\xi)$$
in the case of $\phi\in C_0(\mt^N\times\mr)$ let us remark that due to \eqref{formul1} it is the pointwise limit (in $\omega$ and $t$) of predictable processes
$$t\longmapsto \int_{[0,t]\times\mt^N\times\mr}\phi(x,\xi)\,\dif m^\varepsilon(s,x,\xi)$$
and hence is also measurable with respect to the predictable $\sigma$-algebra.
The case of $\phi\in C_b(\mt^N\times\mr)$ can be verified by using truncations as above. Therefore, we have proved that $m$ is a kinetic measure.

Finally, we deduce that $F$ satisfies the generalized kinetic formulation \eqref{general} and thus is a generalized kinetic solution to \eqref{conser}. Since any generalized kinetic solution is actually a kinetic one, due to the reduction theorem \cite[Theorem 11]{debus}, it follows that $F=\ind_{u>\xi}$ and $\nu=\delta_u$, where $u\in L^p(\Omega\times[0,T]\times\mt^N)$ is the unique kinetic solution to \eqref{conser}. Therefore, it only remains to verify the strong convergence of $f^\varepsilon$ and $u^\varepsilon$ to $\chi_u$ and $u$, respectively.

According to \eqref{weakstar}, we deduce for $f^\varepsilon=F^\varepsilon-\ind_{0>\xi}$ that
\begin{equation*}\label{fugu}
\begin{split}
f^\varepsilon\overset{w^*}\longrightarrow\chi_u\qquad\text{in}\qquad L^\infty(\Omega\times[0,T]\times\mt^N\times\mr),\\
\end{split}
\end{equation*}
and by \eqref{klad} it holds
\begin{equation*}
\begin{split}
\chi_{u^\varepsilon}\longrightarrow\chi_u\qquad\text{in}\qquad\mathcal{D}'((0,T)\times\mt^N\times\mr),\,\prst\text{-a.s.}.
\end{split}
\end{equation*}
Besides, $\{\chi_{u^\varepsilon};\,\varepsilon\in(0,1)\}$ is bounded in $L^\infty(\Omega\times[0,T]\times\mt^N\times\mr)$ hence (up to subsequences) it converges weak* in this space and since $C_c^\infty((0,T)\times\mt^N\times\mr)$ is separable and dense in $L^1([0,T]\times\mt^N\times\mr)$, it follows that $\chi_u$ is the limit, i.e.
$$\chi_{u^\varepsilon}\overset{w^*}{\longrightarrow}\chi_u\qquad\text{in}\qquad L^\infty(\Omega\times[0,T]\times\mt^N\times\mr).$$
Furthermore, according to Proposition \ref{densities}, it holds for any $n\in[0,\infty)$
\begin{equation}\label{att1}
\sup_{0\leq t\leq T}\stred\int_{\mt^N}\int_{\mr}\big(|\chi_{u^\varepsilon(t)}|+|\chi_{u(t)}|\big)(1+|\xi|)^n\,\dif \xi\,\dif x\leq C,
\end{equation}
hence we can relax the conditions on test functions and obtain the strong convergence $\chi_{u^\varepsilon}\rightarrow \chi_u$ in $L^2(\Omega\times[0,T]\times\mt^N\times\mr)$. Indeed,
\begin{equation}\label{att}
\begin{split}
\stred\int_0^T\int_{\mt^N}&\int_\mr|\chi_{u^\varepsilon}-\chi_u|^2\,\dif\xi\,\dif x\,\dif t\\
&=\stred\int_0^T\int_{\mt^N}\int_\mr|\chi_{u^\varepsilon}|-2\chi_{u^\varepsilon}\chi_{u}+|\chi_{u}|\,\dif\xi\,\dif x\,\dif t\longrightarrow0
\end{split}
\end{equation}
since for the first term on the right hand side we have
\begin{equation*}
\begin{split}
\stred\int_0^T\int_{\mt^N}\int_\mr|\chi_{u^\varepsilon}|\,\dif\xi\,\dif x\,\dif t&=\stred\int_0^T\int_{\mt^N}\int_\mr\big(\chi_{u^\varepsilon}\ind_{\xi>0}-\chi_{u^\varepsilon}\ind_{\xi<0}\big)\,\dif\xi\,\dif x\,\dif t
\end{split}
\end{equation*}
where $\ind_{\xi>0},\ind_{\xi<0}$ can be taken as test functions due to \eqref{att1} and for the second term on the right hand side we consider $\chi_u$ as a test function.
As $|\chi_\alpha-\chi_\beta|^p=|\chi_\alpha-\chi_\beta|$ we conclude also the strong convergence in all $L^p(\Omega\times[0,T]\times\mt^N\times\mr)$, $p\in[1,\infty)$.

Moreover, a similar approach can be used to prove the convergence of $f^\varepsilon$. Indeed, the same calculation as in \eqref{att} gives
$$f^\varepsilon\longrightarrow \chi_u\qquad\text{in}\qquad L^2(\Omega\times[0,T]\times\mt^N\times\mr)$$
and using the uniform bound of $\{f^\varepsilon;\,\varepsilon\in(0,1)\}$ in $L^\infty(\Omega\times[0,T]\times\mt^N\times\mr)$ we deduce the convergence in $L^p(\Omega\times[0,T]\times\mt^N\times\mr)$ for all $p\in[1,\infty)$.

Eventually, by the properties of the equilibrium function we have
$$u^\varepsilon\longrightarrow u\qquad\text{in}\qquad L^1(\Omega\times[0,T]\times\mt^N).$$
On the other hand, it follows from Proposition \ref{densities} that the set $\{u^\varepsilon;\,\varepsilon\in(0,1)\}$ is bounded in $L^p(\Omega\times[0,T]\times\mt^N),$ for all $p\in[1,\infty),$ hence by application of the H\"older inequality, we get also the strong convergence
$$u^\varepsilon\longrightarrow u\qquad\text{in}\qquad L^p(\Omega\times[0,T]\times\mt^N)\qquad\forall p\in[1,\infty).$$
Therefore, the proof of convergence in the stochastic BGK model is complete.
\end{proof}

\subsection*{Acknowledgment}
The author is greatly indebted to Arnaud Debussche and Jan Seidler for many stimulating discussions. This research was supported in part by the University Center for Mathematical Modelling, Applied Analysis and Computational Mathematics (Math MAC), Czech Republic, and Inria, Centre de Recherche, Rennes -- Bretagne Atlantique, France.

\end{document}